\def\RSsubtxt{section~}\newref{sub}{name = \RSsubtxt}}
\def\RSthmtxt{theorem~}\newref{thm}{name = \RSthmtxt}}
\def\RSlemtxt{lemma~}\newref{lem}{name = \RSlemtxt}}
\theoremstyle{plain}
\numberwithin{equation}{section}
\numberwithin{figure}{section}
\numberwithin{table}{section}
\newtheorem{stthm}{\protect\theoremname}[section]
\newtheorem{spprop}{\protect\propositionname}[section]
 \theoremstyle{definition}
 \newtheorem{sddefn}{\protect\definitionname}[section]
 \theoremstyle{remark}
 \newtheorem*{rem*}{\protect\remarkname}
 \theoremstyle{plain}
 \newtheorem{sllem}{\protect\lemmaname}[section]
 \theoremstyle{remark}
 \newtheorem{srrem}{\protect\remarkname}[section]
 \theoremstyle{plain}
 \newcounter{LonSP}[spprop]
 \newtheorem{llemsp}[LonSP]{\protect\lemmaname}
 \theoremstyle{plain}
 \newtheorem{stlem}[stthm]{\protect\lemmaname}
\newcommand\mynobreakpar{\par\nobreak\@afterheading}
\newtheorem*{mainthm}{\upshape{\textbf{Main Theorem}}}
 \providecommand{\definitionname}{Definition}
 \providecommand{\lemmaname}{Lemma}
 \providecommand{\propositionname}{Proposition}
 \providecommand{\remarkname}{Remark}
 \providecommand{\theoremname}{Theorem}
\begin{document}

\title{Zeros sets of $H^{p}$ Functions in Lineally Convex Domains of Finite
Type in $\mathbb{C}^{n}$}

\author{P. Charpentier \& Y. Dupain}
\begin{abstract}
In this note we extend N. Th. Varopoulos result on zero sets of $H^{p}$
functions of strictly pseudo-convex domains in $\mathbb{C}^{n}$ to
lineally convex domains of finite type.
\end{abstract}

\keywords{lineally convex, finite type, $d$-equation, $\overline{\partial}$-equation,
zero sets, Hardy classes}

\subjclass[2010]{32T25, 32T27}

\address{P. Charpentier, Universit\'e Bordeaux I, Institut de Math\'ematiques
de Bordeaux, 351, Cours de la Lib\'eration, 33405, Talence, France}

\email{P. Charpentier: philippe.charpentier@math.u-bordeaux.fr}

\maketitle

\section{Introduction}

The study of the zero-sets of holomorphic functions in a given class
of a smoothly bounded domain in $\mathbb{C}^{n}$ is a very classical
problem which has been intensively studied. When $n=1$ those sets
are characterized, by the Blaschke condition, for the Nevanlinna and
Hardy classes, but when $n\geq2$, the situation is more complicated.

Such characterizations are only known for the Nevanlinna under additional
hypothesis on the domain: G. M. Henkin (\cite{Hen75}) and H. Skoda
(\cite{Sko76} independently obtained the case of strictly pseudo-convex
domains, D. C. Chang A. Nagel and E. Stein (\cite{Chang-Nagel-Stein})
proved the same result for pseudo-convex domains of finite type in
$\mathbb{C}^{2}$, and much later the case of convex domains of finite
type (\cite{Bruna-Charp-Dupain-Annals,Cumenge-Navanlinna-convex,DM01}),
and, recently, the case of lineally convex domains of finite type
were obtained (\cite{CDMb}).

In \cite{VarHp80} (see also \cite{AnCa-Hp-90}) N. Th. Varopoulos
proved that, in a strictly pseudo-convex domain, a divisor satisfying
a special Carleson condition is always defined by a function in some
Hardy space $H^{p}(\Omega)$. Tentatives to generalize this result,
for example to convex domains of finite type, were done in \cite{BrGrHp99}
and \cite{Ngu01}, but some gaps in the proofs leave the problem open
until a recent paper of W. Alexandre (\cite{Ale17}). In this last
paper, the author make a strong use of the estimates of the Bergman
metric obtained by Mc. Neal.

In this note, we show that Varopoulos result extends to lineally convex
domains of finite type with a classical method using only the anisotropic
geometry, described in \cite{Conrad_lineally_convex}, of those domains.

\section{Main results}

Let us first recall the definition of a lineally convex domain:
\begin{sddefn}
A domain $\Omega$ in $\mathbb{C}^{n}$, with smooth boundary is said
to be lineally convex at a point $p\in\partial\Omega$ if there exists
a neighborhood $W_{p}$ of $p$ such that, for all point $z\in\partial\Omega\cap W_{p}$,
\[
\left(z+T_{z}^{10}\right)\cap(\Omega\cap W_{p})=\emptyset,
\]
where $T_{z}^{10}$ is the holomorphic tangent space to $\partial\Omega$
at the point $z$.
\end{sddefn}

In all the paper, we assume that $\partial\Omega$ is of finite type
and lineally convex at every point of $\partial\Omega$. We may assume
that there exists a $\mathcal{C}^{\infty}$defining function $\rho$
for $\Omega$ and a number $\eta_{0}>0$ such that $\nabla\rho(z)\neq0$
at every point of $W=\left\{ -\eta_{0}\leq\rho(z)\leq\eta_{0}\right\} $
and the level sets 
\[
\left\{ z\in W\mbox{ such that }\rho(z)=\eta\right\} ,
\]
are lineally convex of finite type.

We thus assume that the defining function $\rho$ satisfies this hypothesis.

In the next section, for $z\in\Omega$ in a fixed small neighborhood
$V$ of $\partial\Omega$, we recall the definition of the two fundamental
quantities $\tau(z,u,\delta)$, for $0<\delta\leq\delta_{0}$, $\delta_{0}>0$
depending only on $\Omega$, and $u$ a non zero complex vector and
$k(z,u)=\frac{\delta_{\Omega}(z)}{\tau\left(z,u,\delta_{\Omega}(z)\right)}$
where $\delta_{\Omega}(z)$ denotes the distance of $z$ to the boundary
of $\Omega$. The lineal convexity hypothesis implies that $(z,u)\mapsto k(z,u)$
is a continuous function in $V\times\mathbb{C}_{*}^{n}$ and, for
$0<\delta_{1}<\delta_{0}$ and $K<+\infty$, there exists constants
$c>0$ and $C<+\infty$ such that for $z\in V\cap\left\{ \delta_{\Omega}(\zeta)\geq\delta_{1}\right\} $
and $\frac{1}{K}\leq\left|u\right|\leq K$, $c\leq k(z,u)\leq C$.
So, if $u(z)$ is a continuous vector field in $\Omega$, $\frac{1}{K}\leq\left|u\right|\leq K$,
$k(z,u(z))$ can be extended to $\Omega$ in a continuous function
satisfying $c\leq k(z,u)\leq C$ in $\Omega\cap\left\{ \delta_{0}\geq\delta_{\Omega}(\zeta)\geq\delta_{1}\right\} $.

To state our main result we have to recall the notion of Carleson
measure in our context (see \cite{ChDu18}): a bounded measure $\mu$
in $\Omega$ is called a Carleson measure if
\[
\left\Vert \mu\right\Vert _{W^{1}(\Omega)}:=\sup_{z\in\partial\Omega,\,0<\varepsilon<\varepsilon_{0}}\frac{\left|\mu\right|\left(P_{\varepsilon}(z)\cap\Omega\right)}{\sigma\left(P_{\varepsilon}(z)\cap\partial\Omega\right)}+\left|\mu\right|\left(\Omega\right)<+\infty,
\]
$\varepsilon_{0}=\alpha\delta_{0}$, for $\alpha$ small enough, where
$P_{\varepsilon}(z)$ is the extremal polydisk defined in the next
section and $\sigma$ the surface measure on $\partial\Omega$. $W^{1}(\Omega)$
will denote the space of Carleson measures on $\Omega$. Then, following
ideas initiated in \cite{Bruna-Charp-Dupain-Annals} and adapted by
W. Alexandre (\cite[Definition 1.2]{Ale17}) to non smooth forms we
consider the following terminology (see \secref{Carleson-currents}
for more details):

A current $\vartheta=\sum_{i,j=1}^{n}\vartheta_{ij}dz_{i}\wedge d\overline{z}_{j}$
of degree $\left(1,1\right)$ and order zero in $\Omega$ is called
a Carleson current (in $\Omega$) if
\[
\left\Vert \vartheta\right\Vert _{W^{1}(\Omega)}:=\sup_{u_{1},u_{2}}\left\Vert \frac{\delta_{\Omega}\left|\vartheta\left(u_{1},u_{2}\right)\right|}{k\left(\cdot,u_{1}\right)k\left(\cdot,u_{2}\right)}\right\Vert _{W^{1}(\Omega)}+\left(\delta_{\Omega}\left|\vartheta\right|\right)(\Omega)<+\infty,
\]
where supremum is taken over all smooth vector fields $u_{1}=\left(u_{1}^{i}\right)_{i}$
and $u_{2}=\left(u_{2}^{i}\right)_{i}$ never vanishing in $\Omega$,
$\left|\vartheta\left(u_{1},u_{2}\right)\right|$ is the absolute
value of the measure $\vartheta\left(u_{1},u_{2}\right)=\sum_{i,j}\vartheta_{ij}u_{1}^{i}\overline{u}_{2}^{j}$
and $k\left(\cdot,u_{k}\right)$ the continuous function $z\rightarrow k\left(z,u_{k}\right)$
defined before.

Similarly, a current $\omega$ of degree $1$ and order zero in $\Omega$
is called a Carleson current (in $\Omega$) if
\[
\left\Vert \omega\right\Vert _{W^{1}(\Omega)}:=\sup_{u}\left\Vert \frac{\left|\omega(u)\right|}{k\left(\cdot,u\right)}\right\Vert _{W^{1}(\Omega)}+\left|\omega\right|(\Omega)<+\infty.
\]

\begin{rem*}
In the above definitions, the expressions are independent of the modulus
of the vector fields. Thus they can always be chosen of modulus one.
\end{rem*}

\begin{mainthm}\textit{\label{mthm:Main-Theorem}Let $\Omega$ be
a smoothly bounded lineally convex domain of finite type in $\mathbb{C}^{n}$.
Let $X$ be a divisor in $\Omega$ and $\vartheta_{X}$ the associated
$\left(1,1\right)$-current of integration. Then, if $\vartheta_{X}$
is a Carleson current and if the cohomology class of $X$ in $H^{2}(\Omega,\mathbb{Z})$
is zero, there exist $p>0$ and $f$ in the Hardy space $H^{p}(\Omega)$
such that $X$ is the zero set of $f$.}\end{mainthm}

The general scheme of the proof is now standard (see \cite{VarHp80,AnCa-Hp-90,BrGrHp99,Ngu01,Ale17}):
following Lelong's theory, we have to find a plurisubharmonic function
$u$ such that $i\partial\overline{\partial}u=\vartheta_{X}$ satisfying
a BMO estimate on $\partial\Omega$, and, as in \cite{VarHp80} (and
\cite{Ngu01}), the conclusion will follow the John-Nirenberg theorem
(\cite{JoNi61}). The two main steps are the resolution of the equation
$idw=\vartheta$ with a Carleson estimate and the resolution of the
$\overline{\partial}_{b}$-equation with a BMO estimate on the boundary
of $\Omega$:
\begin{stthm}
\label{thm:d_resolution}Let $\vartheta$ be a closed Carleson current
of degree $\left(1,1\right)$ and order $0$ in $\Omega$ such that
his canonical cohomology class in $H^{2}(\Omega:\mathbb{C})$ is zero.
Then there exists a Carleson current of degree $1$ and order $0$
$\omega$ satisfying $d\omega=\vartheta$. Furthermore, if $\vartheta$
is real, $\omega$ can be chosen real.\end{stthm}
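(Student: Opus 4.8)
The plan is to obtain $\omega$ by applying to $\vartheta$ an explicit linear operator built from the anisotropic geometry of $\Omega$, so that the membership $\omega\in W^{1}(\Omega)$ is read off from pointwise bounds on a kernel together with $\vartheta\in W^{1}(\Omega)$. A lineally convex domain of finite type is pseudoconvex, hence Stein, so the vanishing of the de Rham class of the closed $(1,1)$-current $\vartheta$ already gives $\vartheta=i\partial\overline{\partial}h$ for a current $h$: writing $\vartheta=d\eta$ with $\eta$ of degree one and decomposing $\eta$ into bidegrees, a $\partial$-closed $(1,0)$-current, resp. a $\overline{\partial}$-closed $(0,1)$-current, is $\partial$-, resp. $\overline{\partial}$-exact on $\Omega$, and $h$ can be chosen real when $\vartheta$ is real; then $\omega_{0}:=\tfrac{1}{2i}\bigl(\partial h-\overline{\partial}h\bigr)$ is a real degree-one current with $d\omega_{0}=\vartheta$, and the whole problem is to choose $h$ so that $\partial h$ (equivalently $\overline{\partial}h$) is a Carleson current. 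Equivalently — the route I would actually follow — one solves $\overline{\partial}g=\vartheta$ for a $(1,0)$-current $g$ with a Carleson bound (using $\overline{\partial}\vartheta=0$), remarks that $\partial g$ is then a $d$-closed holomorphic $2$-form (indeed $\overline{\partial}\partial g=-\partial\vartheta=0$) whose de Rham class $[dg]-[\vartheta]$ is zero, hence, by the holomorphic de Rham isomorphism on the Stein manifold $\Omega$, equal to $\partial\psi$ for a holomorphic $1$-form $\psi$ controllable through Cauchy-type estimates on the extremal polydisks, and sets $\omega:=\operatorname{Re}(g-\psi)$. In either case everything reduces to the $W^{1}$-boundedness of a solution operator.

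That operator is produced from the holomorphic support function $S(z,\zeta)$ for lineally convex domains of finite type — available from \cite{Conrad_lineally_convex}, and uniform over the lineally convex level sets $\{\rho=\eta\}$ of $\rho$ — by the Cauchy--Fantappi\`{e}/Berndtsson--Andersson construction of a homotopy kernel: one gets a homotopy operator $\mathcal{T}$ for $d$ such that $d\mathcal{T}\vartheta=\vartheta-\mathcal{P}\vartheta$ whenever $\vartheta$ is $d$-closed, with $\mathcal{P}\vartheta$ a smooth form on $\overline{\Omega}$. Since the class of $\vartheta$ vanishes, $\mathcal{P}\vartheta$ is moreover $d$-exact, so it has a smooth primitive $\beta$ on $\overline{\Omega}$, which is automatically a Carleson current ($\beta$ being smooth on $\overline{\Omega}$); thus $\omega:=\mathcal{T}\vartheta+\beta$ solves $d\omega=\vartheta$ and it remains to control $\mathcal{T}\vartheta$. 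The Diederich--Forn\ae ss-type minoration $\lvert S(z,\zeta)\rvert\gtrsim\delta_{\Omega}(\zeta)+\lvert\rho(z)-\rho(\zeta)\rvert+(\text{anisotropic directional terms governed by }\tau(\zeta,\cdot,\cdot))$ shows that, after pairing the kernel with a pair of unit vector fields $u_{1},u_{2}$ adapted to the geometry and using the elementary comparisons among $\tau$, $k$ and the volumes of the polydisks $P_{\varepsilon}$, the kernel of $\mathcal{T}$ is supported near the diagonal, gains one anisotropic derivative, and transforms the Carleson density $\delta_{\Omega}\lvert\vartheta(u_{1},u_{2})\rvert/\bigl(k(\cdot,u_{1})\,k(\cdot,u_{2})\bigr)$ controlling $\vartheta$ in $W^{1}$ into the density $\lvert\omega(u)\rvert/k(\cdot,u)$ that must control $\omega$.

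The main obstacle — where essentially all of the work, and all of the use of finite type and lineal convexity, lies — is to deduce $\lVert\mathcal{T}\vartheta\rVert_{W^{1}(\Omega)}\lesssim\lVert\vartheta\rVert_{W^{1}(\Omega)}$ from this pointwise bound. I would argue by the dyadic decomposition adapted to the polydisks: fixing $z_{0}\in\partial\Omega$ and $0<\varepsilon<\varepsilon_{0}$, one estimates $\int_{P_{\varepsilon}(z_{0})\cap\Omega}\lvert(\mathcal{T}\vartheta)(z)(u)\rvert/k(z,u)\,d\lambda(z)$ ($\lambda$ Lebesgue measure) by splitting the $\zeta$-integration over a fixed dilate of $P_{\varepsilon}(z_{0})$, the dyadic coronas $P_{2^{j+1}\varepsilon}(z_{0})\setminus P_{2^{j}\varepsilon}(z_{0})$, $j\ge 0$, and the region far from $z_{0}$ (harmless, the kernel being bounded there); on each piece one uses the pointwise kernel bound, the doubling and engulfing of the family $\{P_{\varepsilon}\}$, the quasi-constancy of $k(\cdot,u)$ and comparability of $\tau$ across a polydisk, the relation $\lambda(P_{\varepsilon}(z)\cap\Omega)\asymp\varepsilon\,\sigma(P_{\varepsilon}(z)\cap\partial\Omega)$, and the Carleson hypothesis on $\vartheta$ applied to the polydisk containing that corona, the derivative gain in the kernel forcing the $j$-series to converge geometrically. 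This controls the supremum part of the norm; the total-mass part $\lvert\mathcal{T}\vartheta\rvert(\Omega)\lesssim(\delta_{\Omega}\lvert\vartheta\rvert)(\Omega)$ is obtained similarly (or directly by Fubini). The genuinely delicate step is the bookkeeping of the $\tau$-exponents, carried out with the estimates of \cite{Conrad_lineally_convex}, that makes these sums actually close. Lastly, when $\vartheta$ is real one replaces $\omega$ by $\operatorname{Re}\,\omega$: since $d$ is a real operator, $d(\operatorname{Re}\,\omega)=\operatorname{Re}\,\vartheta=\vartheta$, and $\lVert\operatorname{Re}\,\omega\rVert_{W^{1}(\Omega)}\le\lVert\omega\rVert_{W^{1}(\Omega)}$, which yields the last assertion.
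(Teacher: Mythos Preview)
Your proposal has a genuine gap at its core, and it also diverges completely from the paper's method.

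The central issue is the operator you invoke. The Cauchy--Fantappi\`e/Berndtsson--Andersson machinery built from a holomorphic support function produces a Koppelman-type homotopy for $\overline{\partial}$, i.e.\ an identity of the form $\overline{\partial}\mathcal{T}+\mathcal{T}\overline{\partial}=\mathrm{Id}-\mathcal{P}$ on $(p,q)$-forms; it does \emph{not} give a homotopy for the real exterior derivative $d$. So the sentence ``one gets a homotopy operator $\mathcal{T}$ for $d$ such that $d\mathcal{T}\vartheta=\vartheta-\mathcal{P}\vartheta$'' is unfounded as written. Your first paragraph implicitly acknowledges this by switching to ``the route I would actually follow'': solve $\overline{\partial}g=\vartheta$ with Carleson control, observe that $\partial g$ is a holomorphic $(2,0)$-form, and write $\partial g=\partial\psi$ for a holomorphic $1$-form $\psi$. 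But this route has its own gap: nothing in your argument controls $\psi$ (or $\partial g$) in the Carleson norm. The phrase ``controllable through Cauchy-type estimates on the extremal polydisks'' hides the whole difficulty; $\partial g$ involves a derivative of the integral operator, which costs exactly the anisotropic gain you obtained, and the holomorphic de Rham primitive $\psi$ is only determined up to closed holomorphic $1$-forms with no a priori boundary control. Without that, $g-\psi$ is not known to lie in $W^{1}(\Omega)$.

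The paper's proof proceeds along an entirely different line and never uses the support function for this theorem. It is local-to-global: one defines sheaves $\mathcal{F}_{0},\mathcal{F}_{1},\mathcal{F}_{2}$ of Carleson-type currents on $\overline{\Omega}$ and shows exactness of $0\to\mathbb{C}\to\mathcal{F}_{0}\to\mathcal{F}_{1}\to\mathcal{F}_{2}\to0$; the global statement then follows from $H^{2}(\overline{\Omega},\mathbb{C})\simeq H^{2}(\Omega,\mathbb{C})$. The local solvability is obtained not from an integral kernel but from an \emph{anisotropic Poincar\'e homotopy}: an explicit family of maps $h_{\Lambda}(t,z)=tz+(\text{anisotropic perturbation built from the extremal bases and the }\tau_{i})$, $\Lambda\in\mathbb{D}^{n}$, is constructed so that the averaged pull-back operator $H\vartheta=\oint_{\mathbb{D}^{n}}\!\int_{0}^{1}(h_{\Lambda}^{*}\vartheta)_{dt}\,d\Lambda$ satisfies $dH+Hd=\mathrm{Id}$. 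After regularization (\propref{regularization-local}) and a cut-off near the homotopy center, the Carleson estimate for $H\vartheta$ is reduced to bounding a positive integral operator $T$ acting on scalar functions, and this is done by a direct tent-integral computation (splitting $t<1-\varepsilon$ versus $t>1-\varepsilon$ and, in the latter range, $r\ge t$ versus $r<t$). The support function and the kernel $K^{1}$ enter only later, in the proof of \thmref{d-bar_resolution}.
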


\begin{rem*}
This theorem could have been stated in the more general context of
geometrically separated domains $\Omega$ introduced in \cite{CD08}.
But, as we cannot prove the Main Theorem in that case, and because
the technical details of the proof would be much more complicated,
we restrict us to the case of lineally convex domains of finite type.
\end{rem*}

The second step is based on the proof of \cite[Theorem 2.4]{ChDu18}:
\begin{stthm}
\label{thm:d-bar_resolution}There exists a constant $C>0$ such that,
for all $\overline{\partial}$-closed Carleson current of degree $\left(0,1\right)$
and order $0$ $\omega$ on $\Omega$, there exists a solution of
the equation $\overline{\partial}_{b}u=\omega$ such that $\left\Vert u\right\Vert _{BMO(\partial\Omega)}\leq C\left\Vert \omega\right\Vert _{W^{1}(\Omega)}$.
\end{stthm}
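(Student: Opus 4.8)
\emph{Proof strategy.} The plan is to follow the scheme of the proof of \cite[Theorem 2.4]{ChDu18}, which treats the case of \emph{smooth} $\overline{\partial}$-closed $(0,1)$-forms, and to check that every step survives when the datum is only a current of order $0$. First, one uses an explicit integral solution operator for $\overline{\partial}$ adapted to the anisotropic geometry of $\Omega$: starting from the support function and the Cauchy--Fantappi\`e/Berndtsson--Andersson machinery available in lineally convex domains of finite type (see \cite{Conrad_lineally_convex,CDMb,ChDu18}), one has a kernel $K(z,\zeta)$, $z\in\Omega$, $\zeta\in\overline{\Omega}$, such that
\[
u(z)=\int_{\Omega}K(z,\zeta)\wedge\omega(\zeta)
\]
solves $\overline{\partial}u=\omega$ in $\Omega$ whenever $\omega$ is $\overline{\partial}$-closed, the possible boundary term in the homotopy formula being holomorphic in $z$ and hence harmless for $\overline{\partial}_{b}$. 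Since $\omega$ has order $0$ (its coefficients are complex measures) and $K(z,\cdot)$ is locally integrable, this integral converges for a.e.\ $z$, the identity $\overline{\partial}u=\omega$ holds in the sense of currents, and it is the (non-tangential) boundary trace $u^{*}$ of $u$ which has to be estimated.

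The second ingredient is the family of pointwise size and first-order derivative estimates for $K$ in terms of the fundamental quantities $\tau(z,u,\delta)$, $k(z,u)$ and the extremal polydiscs $P_{\varepsilon}(z)$; these are exactly the bounds established in \cite{ChDu18} on the basis of \cite{Conrad_lineally_convex}. From them one extracts the two facts needed for a $BMO$ estimate: (i) for $\zeta$ in a fixed dilate of $P_{\varepsilon}(z_{0})\cap\Omega$, $\int_{P_{\varepsilon}(z_{0})\cap\partial\Omega}|K(z,\zeta)|\,d\sigma(z)$ is bounded by $C\,\delta_{\Omega}(\zeta)$ times the anisotropic weight that appears in $\|\omega\|_{W^{1}(\Omega)}$, so that the ``local'' mass of $\omega$ is absorbed; and (ii) a H\"older-type estimate $|K(z,\zeta)-K(z_{0},\zeta)|\le C\,\lambda^{\epsilon}\,|K(z_{0},\zeta)|$ when $z\in P_{\varepsilon}(z_{0})$ and $\zeta$ lies outside the $\lambda^{-1}$-dilate of $P_{\varepsilon}(z_{0})$, which produces geometric decay across the dyadic annuli $2^{j+1}P_{\varepsilon}(z_{0})\setminus 2^{j}P_{\varepsilon}(z_{0})$.

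With these in hand the argument is the classical one. Fix $z_{0}\in\partial\Omega$ and $0<\varepsilon<\varepsilon_{0}$, put $Q=P_{\varepsilon}(z_{0})\cap\partial\Omega$, and split $\omega=\omega\mathbbm{1}_{\widetilde{Q}}+\omega\mathbbm{1}_{\Omega\setminus\widetilde{Q}}=:\omega_{1}+\omega_{2}$, where $\widetilde{Q}$ is a fixed dilate of $P_{\varepsilon}(z_{0})$ intersected with $\Omega$. For $u_{1}$, Fubini together with (i) and the Carleson bound for the mass of $\omega$ on $\widetilde{Q}$ give $\sigma(Q)^{-1}\int_{Q}|u_{1}^{*}|\,d\sigma\le C\|\omega\|_{W^{1}(\Omega)}$. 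The function $u_{2}$ is continuous up to $\partial\Omega$ near $Q$; using (ii), a Whitney-type decomposition of $\Omega\setminus\widetilde{Q}$ into the annuli above, and the doubling property of $\sigma$ for the polydiscs, its oscillation on $Q$ is at most $\sum_{j\ge1}2^{-j\epsilon}\,C\|\omega\|_{W^{1}(\Omega)}\le C\|\omega\|_{W^{1}(\Omega)}$. Choosing the constant of $Q$ to be $u_{2}^{*}(z_{0})$ and adding the two contributions yields $\sigma(Q)^{-1}\int_{Q}|u^{*}-c_{Q}|\,d\sigma\le C\|\omega\|_{W^{1}(\Omega)}$; the term $|\omega|(\Omega)$ in $\|\omega\|_{W^{1}(\Omega)}$ controls the contribution of the ``large'' balls ($\varepsilon\sim\varepsilon_{0}$) and normalizes $u^{*}$. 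Hence $\|u^{*}\|_{BMO(\partial\Omega)}\le C\|\omega\|_{W^{1}(\Omega)}$.

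The main obstacle is not the $BMO$ bookkeeping, which is routine once (i) and (ii) are available, but the verification that the integral representation and the kernel estimates of \cite{ChDu18}---originally written for smooth forms---remain valid when $\omega$ is merely of order $0$: one has to justify convergence of $\int_{\Omega}K(z,\cdot)\wedge\omega$ and of its differentiated versions for a.e.\ $z$, check that $\overline{\partial}u=\omega$ still holds in the current sense, and, above all, match the precise anisotropic weights $\delta_{\Omega}$, $k(\cdot,u_{1})$, $k(\cdot,u_{2})$ occurring in the definition of $\|\omega\|_{W^{1}(\Omega)}$ with the scales $\tau(\cdot,\cdot,\delta)$ governing $K$. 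The geometric facts about the polydiscs $P_{\varepsilon}$ (engulfing, doubling, covering lemmas) will be used as a black box from \cite{Conrad_lineally_convex} and \cite{ChDu18}.
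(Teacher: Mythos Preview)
The paper takes a genuinely different route. Instead of applying the integral operator directly to the non-smooth current $\omega$ and redoing the $BMO$ bookkeeping for measure-valued data, it first \emph{regularizes}: the convolutions $\omega_{\varepsilon}$ are smooth and $\overline{\partial}$-closed on the subdomains $\Omega^{\varepsilon}=\{\rho<-C\varepsilon\}$, and are $\varepsilon$-Carleson currents there with norm $\lesssim\|\omega\|_{W^{1}(\Omega)}$ (\propref{regularization-global}). On each $\Omega^{\varepsilon}$ the datum is now smooth, so \cite[Theorem~2.4]{ChDu18} applies \emph{verbatim} and produces $u_{\varepsilon}$ with $\|u_{\varepsilon}\|_{BMO_{\varepsilon}(\partial\Omega^{\varepsilon})}\lesssim\|\omega\|_{W^{1}(\Omega)}$, uniformly in $\varepsilon$. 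The remaining work is a limiting argument in the style of Skoda \cite[Section~8]{Sko76}: the explicit solution has the form $u_{\varepsilon}=\int K_{\varepsilon}^{1}\wedge\omega_{\varepsilon}-\overline{\partial}^{*}\mathcal{N}_{\varepsilon}\bigl(\int P_{\varepsilon}\wedge\omega_{\varepsilon}\bigr)$ (note the Neumann-operator correction, which is not a holomorphic boundary term as you suggest but a globally smooth function), and one checks that after pulling back by diffeomorphisms $\Phi_{\varepsilon}\colon\partial\Omega\to\partial\Omega^{\varepsilon}$ these converge in $L^{1}(\partial\Omega)$, so that the uniform $BMO_{\varepsilon}$ bounds pass to a $BMO(\partial\Omega)$ bound for the limit $u$. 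The payoff of this detour is exactly that it sidesteps the obstacle you flag at the end: one never has to justify the Koppelman formula or the kernel estimates for order-zero currents, and one never needs to match the sup-over-vector-fields norm $\|\omega\|_{W^{1}(\Omega)}$ with a pointwise anisotropic norm for non-smooth $\omega$ (the equivalence in \propref{comparison-carleson-norms-smooth-forms} is stated only for smooth forms). Your direct approach is plausible in outline, but carrying it out would amount to reproving the smooth case inside the argument rather than invoking it; the paper's regularize-then-limit strategy is both shorter and cleaner.
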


Note that in \cite{ChDu18} this last result is stated, for smooth
forms, with $\left\Vert \left\Vert \omega(\zeta)\right\Vert _{\mathbbmss k}d\lambda\right\Vert _{W^{1}}$
instead of $\left\Vert \omega\right\Vert _{W^{1}(\Omega)}$, but we
will see in \secref{Carleson-currents} that, for smooth forms, these
two quantities are equivalent.

\bigskip{}

\thmref{d_resolution} is proved in \secref{Proof-of-theorem-2.1}:
after a regularization procedure we will essentially follow the general
scheme developed in \cite{AnCa-Hp-90} (see also \cite{Sko76} and
\cite{VarHp80}), the technical part being a strong modification of
the calculus made in \cite{BrGrHp99}.

\thmref{d-bar_resolution} is proved in \secref{Proof-of-d-bar-thm}:
once again, after a convenient regularization we use the methods developed
in \cite{ChDu18} and in \cite{Sko76}.

\section{Geometry of lineally convex domains of finite type\label{sec:Geometry-of-lineally}}

The anisotropic geometry of lineally convex domains of finite type
is described in \cite{Conrad_lineally_convex}. Let us just recall
the basic estimates (from \cite{ChDu18}) we will use in the next
section.

For $\zeta$ close to $\partial\Omega$ and $\varepsilon\leq\varepsilon_{0}$,
$\varepsilon_{0}$ small, define, for all non zero vector $v$,
\begin{equation}
\tau\left(\zeta,v,\varepsilon\right)=\sup\left\{ c\mbox{ such that }\rho\left(\zeta+\lambda v\right)-\rho(\zeta)<\varepsilon,\,\forall\lambda\in\mathbb{C},\,\left|\lambda\right|<c\right\} .\label{eq:definition-tau-general}
\end{equation}
Note that the lineal convexity hypothesis implies that the function
$\left(\zeta,\varepsilon\right)\mapsto\tau(\zeta,v,\varepsilon)$
is smooth. In particular, $\zeta\mapsto\tau(\zeta,v,\delta_{\Omega}(\zeta))$
is a smooth function. The pseudo-balls $\mathcal{B}_{\varepsilon}(\zeta)=\mathcal{B}(\zeta,\varepsilon)$
(for $\zeta$ close to the boundary of $\Omega$) of the homogeneous
space associated to the anisotropic geometry of $\Omega$ are
\begin{equation}
\mathcal{B}_{\varepsilon}(\zeta)=\left\{ \xi=\zeta+\lambda u\mbox{ with }\left|u\right|=1\mbox{ and }\left|\lambda\right|<c_{0}\tau(\zeta,u,\varepsilon)\right\} \label{eq:def-pseudo-balls}
\end{equation}
where $c_{0}$ is chosen sufficiently small depending only on the
defining function $\rho$ of $\Omega$ and we define
\[
d(\zeta,z)=\inf\left\{ \varepsilon\mbox{ such that }z\in\mathcal{B}_{\varepsilon}(\zeta)\right\} .
\]

Let $\zeta$ and $\varepsilon$ be fixed. Then, an orthonormal basis
$\left(v_{1},v_{2},\ldots,v_{n}\right)$ is called \emph{$\left(\zeta,\varepsilon\right)$-extremal}
(or $\varepsilon$-\emph{extremal}, or simply \emph{extremal}) if
$v_{1}$ is the complex normal (to $\rho$) at $\zeta$, and, for
$i>1$, $v_{i}$ belongs to the orthogonal space of the vector space
generated by $\left(v_{1},\ldots,v_{i-1}\right)$ and minimizes $\tau\left(\zeta,v,\varepsilon\right)$
in the unit sphere of that space. In association to an extremal basis,
we denote
\begin{equation}
\tau(\zeta,v_{i},\varepsilon)=\tau_{i}(\zeta,\varepsilon).\label{eq:definition-tau-extremal-basis}
\end{equation}

Then we defined polydiscs $AP_{\varepsilon}(\zeta)$ by
\begin{equation}
AP_{\varepsilon}(\zeta)=\left\{ z=\zeta+\sum_{k=1}^{n}\lambda_{k}v_{k}\mbox{ such that }\left|\lambda_{k}\right|\leq c_{0}A\tau_{k}(\zeta,\varepsilon)\right\} .\label{eq:def-polydisk}
\end{equation}

$P_{\varepsilon}(\zeta)$ being the corresponding polydisc with $A=1$
and we also define
\[
d_{1}(\zeta,z)=\inf\left\{ \varepsilon\mbox{ such that }z\in P_{\varepsilon}(\zeta)\right\} .
\]

\begin{rem*}
Note that there are neither uniqueness of the extremal basis $\left(v_{1},v_{2},\ldots,v_{n}\right)$
nor of associated polydisk $P_{\varepsilon}(\zeta)$. However the
functions $\tau_{i}$ and the polydisks associated to two different
$\left(\zeta,\varepsilon\right)$-extremal basis are equivalent. Thus
in all the paper $P_{\varepsilon}(\zeta)=P(\zeta,\varepsilon)$ will
denote a polydisk associated to any $\left(\zeta,\varepsilon\right)$-extremal
basis and $\tau_{i}(\zeta,\varepsilon)$ the radius of $P_{\varepsilon}(\zeta)$.
\end{rem*}

The fundamental result here is that $d$ and $d_{1}$ are equivalent
pseudo-distance which means that there exists a constant $K$ and,
$\forall\alpha>0$, constants $c(\alpha)$ and $C(\alpha)$ such that
\begin{equation}
\mbox{for }\zeta\in P_{\varepsilon}(z),\,P_{\varepsilon}(z)\subset P_{K\varepsilon}(\zeta),\label{eq:inclusion-polydisques-pseudo}
\end{equation}
and
\begin{equation}
c(\alpha)P_{\varepsilon}(\zeta)\subset P_{\alpha\varepsilon}(\zeta)\subset C(\alpha)P_{\varepsilon}(\zeta)\mbox{ and }P_{c(\alpha)\varepsilon}(\zeta)\subset\alpha P_{\varepsilon}(\zeta)\subset P_{C(\alpha)\varepsilon}(\zeta).\label{eq:polydiscs-pseudodistance}
\end{equation}

Moreover the pseudo-balls $B_{\varepsilon}$ and the polydiscs $P_{\varepsilon}$
are equivalent in the sense that there exists a constant $K>0$ depending
only on $\Omega$ such that
\begin{equation}
\frac{1}{K}P_{\varepsilon}(\zeta)\subset\mathcal{B}_{\varepsilon}(\zeta)\subset KP_{\varepsilon}(\zeta),\label{eq:equivalence-pseudoballs-polydisk}
\end{equation}
so
\[
d(\zeta,z)\simeq d_{1}(\zeta,z).
\]

Let us recall for $\zeta$ close to $\partial\Omega$ and $\varepsilon>0$
small, other basic properties of this geometry (see \cite{Conrad_lineally_convex}
and \cite{CDMb}):
\begin{sllem}
\label{lem:basic-properties-geometry}\mynobreakpar
\begin{enumerate}
\item \label{geometry-1}Let $w=\left(w_{1},\ldots,w_{n}\right)$ be an
orthonormal system of coordinates centered at $\zeta$. Then
\[
\left|\frac{\partial^{\left|\alpha+\beta\right|}\rho(\zeta)}{\partial w^{\alpha}\partial\bar{w}^{\beta}}\right|\lesssim\frac{\varepsilon}{\prod_{i}\tau\left(\zeta,w_{i},\varepsilon\right)^{\alpha_{i}+\beta_{i}}},\,\left|\alpha+\beta\right|\geq1.
\]

\item \label{geometry-3}If $\left(v_{1},\ldots,v_{n}\right)$ is a $\left(\zeta,\varepsilon\right)$-extremal
basis and $\gamma=\sum_{1}^{n}a_{j}v_{j}\neq0$, then
\[
\frac{1}{\tau(\zeta,\gamma,\varepsilon)}\simeq\sum_{j=1}^{n}\frac{\left|a_{j}\right|}{\tau_{j}(\zeta,\varepsilon)}.
\]

\item \label{geometry-4}If $v$ is a unit vector then:

\begin{enumerate}
\item $z=\zeta+\lambda v\in P_{\varepsilon}(\zeta)$ implies $\left|\lambda\right|\lesssim\tau(\zeta,v,\varepsilon)$,
\item $z=\zeta+\lambda v$ with $\left|\lambda\right|\leq\tau(\zeta,v,\varepsilon)$
implies $z\in CP_{\varepsilon}(\zeta)$.
\end{enumerate}
\item \label{geometry-5}If $\nu$ is the unit complex normal vector, then
$\tau(\zeta,v,\varepsilon)=\varepsilon$ and if $v$ is any unit vector
and $\lambda\geq1$,
\begin{equation}
\lambda^{\nicefrac{1}{m}}\tau(\zeta,v,\varepsilon)\lesssim\tau(\zeta,v,\lambda\varepsilon)\lesssim\lambda\tau(\zeta,v,\varepsilon),\label{eq:comp-tau-epsilon-tau-lambda-epsilon}
\end{equation}
where $m$ is the type of $\Omega$.
\end{enumerate}
\end{sllem}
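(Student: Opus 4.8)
The plan is to treat these four items as the standard anisotropic toolkit for lineally convex domains of finite type: I would import from \cite{Conrad_lineally_convex} (see also \cite{CDMb}, \cite{ChDu18}) two technical cores and deduce the rest by elementary manipulations. The first core is the explicit description of $\tau(\zeta,v,\varepsilon)$, for a unit vector $v$, as comparable to $\min_{1\le j\le m}\bigl(\varepsilon/A_{j}(\zeta,v)\bigr)^{1/j}$, where $m$ is the type and $A_{j}(\zeta,v)$ collects the moduli of the order-$j$ coefficients of $\lambda\mapsto\rho(\zeta+\lambda v)$ at $\lambda=0$ (the $j=1$ term carrying the normal component of $v$). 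The second is that an extremal basis at $(\zeta,\varepsilon)$ genuinely diagonalizes the geometry, a displacement along $v_{i}$ of modulus up to $\tau_{i}(\zeta,\varepsilon)$ being essentially the largest admissible one. Both are established in \cite{Conrad_lineally_convex} by restricting $\rho$ to complex lines and exploiting the near-convexity of these restrictions forced by lineal convexity; this, together with the control of $\tau$ along linear combinations of the extremal directions, is where the real work sits and is the step I expect to be the main obstacle in a self-contained treatment.

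Granting these cores, the derivative bounds are obtained by reading $A_{j}(\zeta,w_{i})\lesssim\varepsilon\,\tau(\zeta,w_{i},\varepsilon)^{-j}$ off the $\tau$-comparison along each $w_{i}$, and then recovering a genuinely mixed derivative $\partial_{w}^{\alpha}\partial_{\bar w}^{\beta}\rho(\zeta)$ by polarizing the relevant order-$j$ multilinear forms over the directions $w_{i}+\theta w_{j}$, $|\theta|=1$, the first core supplying the needed control of $\tau(\zeta,w_{i}+\theta w_{j},\varepsilon)$. The comparison $\tau(\zeta,\gamma,\varepsilon)^{-1}\simeq\sum_{j}|a_{j}|\,\tau_{j}(\zeta,\varepsilon)^{-1}$ for $\gamma=\sum_{j}a_{j}v_{j}$ then follows: the inequality $\lesssim$ from a Taylor expansion of $\rho(\zeta+\lambda\gamma)-\rho(\zeta)$ estimated monomial by monomial with the derivative bounds, giving a geometric-type series that stays $\lesssim\varepsilon$ as soon as $|\lambda|\sum_{j}|a_{j}|\tau_{j}^{-1}$ is small; and the inequality $\gtrsim$ from the adequacy of the extremal basis, since it reduces to $\tau(\zeta,\gamma,\varepsilon)\lesssim\tau_{k}(\zeta,\varepsilon)/|a_{k}|$ for each $k$.

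With the $\tau$-comparison in hand the last two items are routine. Writing a unit vector $v=\sum_{k}b_{k}v_{k}$ in the extremal basis, $\zeta+\lambda v\in P_{\varepsilon}(\zeta)$ forces $|\lambda|\,|b_{k}|\le c_{0}\tau_{k}$ for every $k$, whence $|\lambda|\le c_{0}\min_{k}\tau_{k}/|b_{k}|\simeq c_{0}\bigl(\sum_{k}|b_{k}|\tau_{k}^{-1}\bigr)^{-1}\simeq c_{0}\tau(\zeta,v,\varepsilon)$; conversely $|\lambda|\le\tau(\zeta,v,\varepsilon)$ gives $|\lambda b_{k}|\le|b_{k}|\tau(\zeta,v,\varepsilon)\lesssim\tau_{k}$ for all $k$, so $\zeta+\lambda v\in CP_{\varepsilon}(\zeta)$. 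Along the complex normal $\nu$ only the $j=1$ term survives, with $A_{1}(\zeta,\nu)\simeq|\nabla\rho(\zeta)|\ne0$ on $W$, so $\tau(\zeta,\nu,\varepsilon)\simeq\varepsilon$, equal to $\varepsilon$ after the usual normalization of $\rho$; and replacing $\varepsilon$ by $\lambda\varepsilon$ with $\lambda\ge1$ multiplies the $j$-th term $\bigl(\varepsilon/A_{j}(\zeta,v)\bigr)^{1/j}$ of the first core by $\lambda^{1/j}\in[\lambda^{1/m},\lambda]$, and taking the minimum preserves these two bounds, which is precisely \eqref{eq:comp-tau-epsilon-tau-lambda-epsilon}. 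In short, I would cite \cite{Conrad_lineally_convex} for the two cores and include only the short deductions above for the remaining parts.
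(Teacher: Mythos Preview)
Your proposal is correct and in fact more detailed than the paper's own treatment: the paper does not prove this lemma at all but simply records it as known, prefacing the statement with ``Let us recall \ldots\ other basic properties of this geometry (see \cite{Conrad_lineally_convex} and \cite{CDMb})''. Your plan of citing \cite{Conrad_lineally_convex} for the two technical cores and supplying the short deductions for the remaining items is therefore entirely in line with, and indeed goes beyond, what the authors do.
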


\begin{sllem}[Lemma 3.4 of \cite{ChDu18}]
\label{lem:3.4-maj-deriv-rho-equiv-tho-i-z-zeta}For $z$ close to
$\partial\Omega$, $\varepsilon$ small and $\zeta\in P_{\varepsilon}(z)$
or $z\in P_{\varepsilon}(\zeta)$, we have, for all $1\leq i\leq n$:
\begin{enumerate}
\item $\tau_{i}(z,\varepsilon)=\tau\left(z,v_{i}\left(z,\varepsilon\right),\varepsilon\right)\simeq\tau\left(\zeta,v_{i}\left(z,\varepsilon\right),\varepsilon\right)$
where $\left(v_{i}\left(z,\varepsilon\right)\right)_{i}$ is the $\left(z,\varepsilon\right)$-extremal
basis;
\item $\tau_{i}(\zeta,\varepsilon)\simeq\tau_{i}(z,\varepsilon)$;
\item In the coordinate system $\left(z_{i}\right)$ associated to the $\left(z,\varepsilon\right)$-extremal
basis, $\left|\frac{\partial\rho}{\partial z_{i}}(\zeta)\right|\lesssim\frac{\varepsilon}{\tau_{i}}$
where $\tau_{i}$ is either $\tau_{i}\left(z,\varepsilon\right)$
or $\tau_{i}\left(\zeta,\varepsilon\right)$.
\end{enumerate}
\end{sllem}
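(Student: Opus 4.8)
The plan is to deduce all three assertions from a single \emph{pointwise comparison}: if $\zeta\in P_{\varepsilon}(z)$ (or $z\in P_{\varepsilon}(\zeta)$), then $\tau(z,v,\varepsilon)\simeq\tau(\zeta,v,\varepsilon)$ for \emph{every} non-zero vector $v$, with constants depending only on $\Omega$. Once this is known, assertion (1) is the special case $v=v_{i}(z,\varepsilon)$; assertion (2) will follow by inserting the comparison into the description \lemref{basic-properties-geometry}(3) of $\tau$ along an extremal basis; and assertion (3) will be immediate from \lemref{basic-properties-geometry}(1).

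To prove the comparison, I would fix a unit vector $v$, put $c=\tau(z,v,\varepsilon)$, and note first that for $\left|\lambda\right|<c$ one has $z+\lambda v\in CP_{\varepsilon}(z)$ by \lemref{basic-properties-geometry}(4)(b), hence $\lambda v\in C\bigl(P_{\varepsilon}(z)-z\bigr)$; since also $\zeta-z\in P_{\varepsilon}(z)-z$, this gives $\zeta+\lambda v=z+(\zeta-z)+\lambda v\in C'P_{\varepsilon}(z)\subset P_{C''\varepsilon}(z)$, the last inclusion by \eqref{eq:polydiscs-pseudodistance}. Then I would Taylor-expand $\rho$ at $z$ to order $m$ (the type of $\Omega$) in $\left(z,\varepsilon\right)$-extremal coordinates: by the derivative bounds \lemref{basic-properties-geometry}(1), and since on $P_{C''\varepsilon}(z)$ one has $\left|w_{i}\right|\lesssim\tau_{i}(z,C''\varepsilon)\lesssim\tau_{i}(z,\varepsilon)$ by \eqref{eq:comp-tau-epsilon-tau-lambda-epsilon}, every monomial of degree $\geq1$ is $\lesssim\varepsilon$ there, while the order-$(m+1)$ remainder is $O(\varepsilon^{(m+1)/m})=O(\varepsilon)$ because $\operatorname{diam}P_{C''\varepsilon}(z)\lesssim\tau_{n}(z,\varepsilon)\lesssim\varepsilon^{1/m}$; hence $\left|\rho(\xi)-\rho(z)\right|\lesssim\varepsilon$ for all $\xi\in P_{C''\varepsilon}(z)$. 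Applying this to $\xi=\zeta+\lambda v$ and to $\xi=\zeta$ yields $\rho(\zeta+\lambda v)-\rho(\zeta)\lesssim\varepsilon$ for all $\left|\lambda\right|<c$, i.e.\ $\tau(\zeta,v,C_{1}\varepsilon)\geq c$, and one application of \eqref{eq:comp-tau-epsilon-tau-lambda-epsilon} gives $\tau(\zeta,v,\varepsilon)\gtrsim c$. For the reverse inequality I would use $z\in P_{\varepsilon}(z)\subset P_{K\varepsilon}(\zeta)$ from \eqref{eq:inclusion-polydisques-pseudo}, run the same argument with $z$ and $\zeta$ interchanged and $\varepsilon$ replaced by $K\varepsilon$, and absorb the change of scale by \eqref{eq:comp-tau-epsilon-tau-lambda-epsilon}; the case $z\in P_{\varepsilon}(\zeta)$ is symmetric, and homogeneity of $\tau$ in its vector argument extends the comparison to all $v\neq0$.

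For assertion (2): by the comparison together with \lemref{basic-properties-geometry}(3), for every $\gamma=\sum_{j}a_{j}v_{j}(z,\varepsilon)=\sum_{j}b_{j}v_{j}(\zeta,\varepsilon)\neq0$ one has $\sum_{j}\left|a_{j}\right|/\tau_{j}(z,\varepsilon)\simeq1/\tau(z,\gamma,\varepsilon)\simeq1/\tau(\zeta,\gamma,\varepsilon)\simeq\sum_{j}\left|b_{j}\right|/\tau_{j}(\zeta,\varepsilon)$; hence the parallelepiped with half-axes $\tau_{j}(z,\varepsilon)$ in the $\left(z,\varepsilon\right)$-extremal basis and the one with half-axes $\tau_{j}(\zeta,\varepsilon)$ in the $\left(\zeta,\varepsilon\right)$-extremal basis are each contained in a fixed dilate of the other. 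Since along an extremal basis the $\tau_{j}$ are, up to a constant, nondecreasing in $j$, an elementary comparison of the widths of two such mutually comparable parallelepipeds forces $\tau_{j}(z,\varepsilon)\simeq\tau_{j}(\zeta,\varepsilon)$ for every $j$, which is assertion (2). Finally, assertion (3) is obtained by applying \lemref{basic-properties-geometry}(1) at the point $\zeta$ in the $\left(z,\varepsilon\right)$-extremal coordinate system with $\left|\alpha+\beta\right|=1$: this gives $\left|\partial\rho/\partial z_{i}(\zeta)\right|\lesssim\varepsilon/\tau(\zeta,v_{i}(z,\varepsilon),\varepsilon)$, and the denominator is $\simeq\tau_{i}(z,\varepsilon)\simeq\tau_{i}(\zeta,\varepsilon)$ by assertions (1) and (2).

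The step I expect to require the most care is the Taylor estimate in the pointwise comparison — that $\rho$ varies by at most $O(\varepsilon)$ over the \emph{dilated} polydisk $P_{C''\varepsilon}(z)$ — since it must use simultaneously the full strength of the derivative estimate \lemref{basic-properties-geometry}(1) and of the scale comparison \eqref{eq:comp-tau-epsilon-tau-lambda-epsilon}, the latter both to bound the $\left|w_{i}\right|$ by $\tau_{i}(z,\varepsilon)$ and to control the remainder through $\operatorname{diam}P_{C''\varepsilon}(z)\simeq\tau_{n}(z,\varepsilon)\lesssim\varepsilon^{1/m}$. Everything else is bookkeeping with the constants already packaged in \lemref{basic-properties-geometry} and in \eqref{eq:inclusion-polydisques-pseudo}--\eqref{eq:polydiscs-pseudodistance}.
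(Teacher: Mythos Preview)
The paper does not prove this lemma: it is quoted as ``Lemma 3.4 of \cite{ChDu18}'' and the proof is left to that reference. There is therefore no argument in the present paper to compare yours against.

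That said, your approach is correct and is the standard one. The key pointwise comparison $\tau(z,v,\varepsilon)\simeq\tau(\zeta,v,\varepsilon)$ via a Taylor expansion of $\rho$ in $(z,\varepsilon)$-extremal coordinates --- controlling the terms of order $1\leq|\alpha+\beta|\leq m$ by \lemref{basic-properties-geometry}(\ref{geometry-1}) and the order-$(m{+}1)$ remainder by the uniform $\mathcal C^{m+1}$ bound on $\rho$ together with $\mathrm{diam}\,P_{C''\varepsilon}(z)\lesssim\varepsilon^{1/m}$ from \eqref{eq:comp-tau-epsilon-tau-lambda-epsilon} --- is exactly how this is done in the literature on these geometries. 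Assertions (1) and (3) then fall out as you say.

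For assertion (2), your ``elementary comparison of widths'' is valid but deserves one line of unpacking, since the two extremal bases at $z$ and at $\zeta$ are a priori unrelated. What makes it work is a Courant--Fischer-type observation: by \lemref{basic-properties-geometry}(\ref{geometry-3}) the ordered radii $\tau_j(z,\varepsilon)$ are, up to constants depending only on $n$, the successive widths
\[
\max_{\dim V=k}\ \min_{v\in V,\ |v|=1}\ \tau(z,v,\varepsilon)
\]
of the body $\{\gamma:\tau(z,\gamma,\varepsilon)\geq 1\}$; since your pointwise comparison shows this body is the same at $z$ and $\zeta$ up to a fixed dilation, the ordered sequences $(\tau_j(z,\varepsilon))_j$ and $(\tau_j(\zeta,\varepsilon))_j$ must match termwise up to constants. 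Making this explicit would strengthen the write-up.
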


\begin{srrem}
Clearly, for $\delta_{\Omega}(z)\leq\delta_{1}$ and all non zero
vector $v$, we can extend smoothly the functions $\tau\left(z,v,\varepsilon\right)$
to all $\varepsilon$ and we can also define vectors $e_{i}(z,\varepsilon)$
and polydisks $P(z,\varepsilon)$, so that the above properties remain
true with constants depending on $A$ for $\varepsilon\mbox{ and }\lambda\varepsilon\in\left]0,A\right]$,
$\delta_{\Omega}(z)\mbox{ and }\delta_{\Omega}(\zeta)\leq\delta_{1}$.

Of course the new $\left(e_{i}(z,\varepsilon)\right)_{i}$ are not
extremal basis in the original sense but we will call them again extremal
basis.
\end{srrem}

\section{Some properties of Carleson currents\label{sec:Carleson-currents}}

In the previous section, we defined the terminology of Carleson current
of degree $\left(1,1\right)$ or $1$. We extend it to general currents
$T$ of degree $2$ with the same definition:

Let $T=\sum_{i<j}T_{i,j}^{0}dz^{i}\wedge dz^{j}+\sum T_{i,j}^{1}dz^{i}\wedge d\overline{z}^{j}+\sum_{i<j}T_{i,j}^{2}d\overline{z}^{i}\wedge d\overline{z}^{j}$
then
\[
\left\Vert T\right\Vert _{W^{1}(\Omega)}=\sup_{u_{1},u_{2}}\left\Vert \frac{\delta_{\Omega}\left|T\left(u_{1},u_{2}\right)\right|}{k\left(\cdot,u_{1}\right)k\left(\cdot,u_{2}\right)}\right\Vert _{W^{1}(\Omega)}+\left(\delta_{\Omega}\left|T\right|\right)(\Omega)<+\infty,
\]
where supremum is taken over all smooth vector fields $u_{1}=\left(u_{1}^{i}\right)_{i}$
and $u_{2}=\left(u_{2}^{i}\right)_{i}$ never vanishing in $\Omega$,
and $\left|T\left(u_{1},u_{2}\right)\right|$ is the absolute value
of the measure
\[
T\left(u_{1},u_{2}\right)=\sum_{i<j}T_{i,j}^{0}u_{1}^{i}u_{2}^{j}+\sum T_{i,j}^{1}u_{1}^{i}\overline{u_{2}^{j}}+\sum_{i<j}T_{i,j}^{2}\overline{u_{1}^{i}}\overline{u_{2}^{j}}.
\]

Moreover, let $V$ be an open set in $\Omega$ and $T$ a current
of degree $1$ or $2$ and order zero in $V$. We say that $T$ is
a Carleson current in $\Omega$ if the current $\chi_{V}T$, where
$\chi_{V}$ is the characteristic function of $V$, is a Carleson
current in $\Omega$ and we denote $\left\Vert T\right\Vert _{W^{1}(\Omega)}:=\left\Vert \chi_{V}T\right\Vert _{W^{1}(\Omega)}$.

Note that, if $V$ is relatively compact in $\Omega$, a current $T$
in $V$ is a Carleson current (in $\Omega$) if the coefficients of
$T$ are bounded measures.

\bigskip{}

In the two next sections we need to regularize Carleson currents to
be able to write explicit formulas solving the $d$ or the $\overline{\partial}$
equation. This is done classically using convolutions (see M. Andersson
and H. Carlsson, \cite[page 472]{AnCa-Hp-90} in the case of strictly
pseudo-convex domains), and, because of the definition of the $W^{1}(\Omega)$
norm for currents, we give below some details (for currents of degree
$\left(1,1\right)$ to simplify notations) when $V$ is contained
in a small neighborhood of a point of $\partial\Omega$ ($V\subset\left\{ \delta_{\Omega}(z)<\beta\delta_{1}\right\} $).

For $\varepsilon>0$ sufficiently small, let $\varphi_{\epsilon}=\frac{1}{\varepsilon^{2n}}\varphi\left(\frac{z}{\varepsilon}\right)$
where $\varphi$ is a $\mathcal{C}^{\infty}$-smooth non negative
function supported in the ball $\left\{ \left|z\right|<\nicefrac{1}{2}\right\} $
of $\mathbb{C}^{n}$ such that $\int\varphi=1$. Let $T=\sum T_{I,J}dz^{I}\wedge d\overline{z}^{J}$
be a Carleson current of order zero in an open set $V$ of $\Omega$.
Let 
\[
V_{\varepsilon}=\left\{ z\in V\mbox{ such that }\delta_{\partial V}(z)>\varepsilon\right\} .
\]
 Then for $z\in V_{\varepsilon}$ define
\[
T_{\varepsilon}=\sum_{I,J}T_{I,J}*\varphi_{\varepsilon}dz^{I}\wedge d\overline{z}^{J}
\]
so that $T_{\varepsilon}$ is a smooth form in $V_{\varepsilon}$.
\begin{spprop}
\label{prop:regularization-local}With the above notations, if $T$
is a closed Carleson current of degree $2$ or $1$ in $V$, then
the forms $T_{\varepsilon}$ are closed and $\left\Vert T_{\varepsilon}\right\Vert _{W^{1}(\Omega)}\lesssim\left\Vert T\right\Vert _{W^{1}(\Omega)}$.\end{spprop}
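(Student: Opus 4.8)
The plan is to reduce the statement to an elementary estimate for convolutions of measures, the one delicate point being the interplay between the mollification and the anisotropic weights $k(\cdot,u)$. Closedness is immediate: in the flat coordinates of $\mathbb{C}^{n}$ the forms $dz^{I}\wedge d\overline{z}^{J}$ are constant and convolution commutes with the first-order operators acting on the coefficients, so $dT_{\varepsilon}=(dT)*\varphi_{\varepsilon}=0$ on $V_{\varepsilon}$. For the norm bound I treat the degree-$2$ case; the degree-$1$ case is analogous throughout (one vector field, and the norm carries no factor $\delta_{\Omega}$). Write $T=\sum_{K}T_{K}e_{K}$ over the basic $2$-forms $e_{K}$ in $dz,d\overline{z}$ and fix a polar decomposition $T_{K}=\theta_{K}\,|T|$ with $|T|=\sum_{K}|T_{K}|$ and $\sum_{K}|\theta_{K}|\lesssim1$ almost everywhere with respect to $|T|$. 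For never-vanishing smooth unit fields $u_{1},u_{2}$ and $z\in V_{\varepsilon}$, freezing the vectors at $z$ and pulling the constants inside the convolution integral gives
\[
T_{\varepsilon}(u_{1},u_{2})(z)=\bigl(\varphi_{\varepsilon}*T(u_{1}(z),u_{2}(z))\bigr)(z),\qquad\bigl|T_{\varepsilon}(u_{1},u_{2})(z)\bigr|\le\int\varphi_{\varepsilon}(z-\zeta)\,d\bigl|T(u_{1}(z),u_{2}(z))\bigr|(\zeta),
\]
where $T(a,b)$ denotes the scalar measure obtained by pairing the coefficients of $T$ with the constant vectors $a,b$.

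The integrand lives in $B(z,\varepsilon/2)$, and the construction of $V_{\varepsilon}$ keeps its points at Euclidean distance $\gtrsim\varepsilon$ from $\partial\Omega$ (the boundary point of $\Omega$ nearest to such a $z$ lies on $\partial V$). Hence for $\zeta\in B(z,\varepsilon/2)$ one has $\varepsilon\lesssim\delta_{\Omega}(z)\simeq\delta_{\Omega}(\zeta)$ and $\zeta\in P_{C\varepsilon}(z)$, and \lemref{basic-properties-geometry} together with \lemref{3.4-maj-deriv-rho-equiv-tho-i-z-zeta} gives $\tau(z,v,\delta_{\Omega}(z))\simeq\tau(\zeta,v,\delta_{\Omega}(\zeta))$ for every $v$; since moreover $\varepsilon\lesssim\delta_{\Omega}(\zeta)$, the expansion $1/\tau(\zeta,\gamma,\delta)\simeq\sum_{j}|a_{j}|/\tau_{j}(\zeta,\delta)$ of \lemref{basic-properties-geometry} also yields $\tau(\zeta,v,\delta_{\Omega}(\zeta))\simeq\tau(\zeta,v',\delta_{\Omega}(\zeta))$ whenever $|v-v'|\lesssim\varepsilon$. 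In particular $k(z,u_{i}(z))\simeq k(\zeta,u_{i}(\zeta))$. I also record the crude bounds $\sup_{|u|=1}k(\zeta,u)\lesssim1$ and $\inf_{|u|=1}k(\zeta,u)\gtrsim\delta_{\Omega}(\zeta)^{1-1/m}\gtrsim\varepsilon^{1-1/m}$, $m$ being the type of $\Omega$.

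Now the estimate. Testing the definition of $\|\cdot\|_{W^{1}(\Omega)}$ against the constant fields $e_{i}$ and $ie_{i}$ recovers each $T_{K}$ as a fixed linear combination, and since $k(\cdot,e_{i})\lesssim1$ this shows that $\delta_{\Omega}|T_{K}|$ is a Carleson measure of norm $\lesssim\|T\|_{W^{1}(\Omega)}$; summing, $(\delta_{\Omega}|T|)(\Omega)\lesssim\|T\|_{W^{1}(\Omega)}$ and $(\delta_{\Omega}|T|)(P_{r}(\xi))\lesssim\|T\|_{W^{1}(\Omega)}\,\sigma(P_{r}(\xi)\cap\partial\Omega)$ for all $\xi\in\partial\Omega$ and $r<\varepsilon_{0}$. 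The first bound, with $|T_{\varepsilon}|\le|T|*\varphi_{\varepsilon}$ and $\delta_{\Omega}(z)\simeq\delta_{\Omega}(\zeta)$ on supports, yields the total-mass half $(\delta_{\Omega}|T_{\varepsilon}|)(\Omega)\lesssim\|T\|_{W^{1}(\Omega)}$. For the Carleson half fix $\zeta_{0}\in\partial\Omega$ and $\varepsilon'<\varepsilon_{0}$ and bound $\mu_{\varepsilon}(P_{\varepsilon'}(\zeta_{0}))$, where $\mu_{\varepsilon}:=\chi_{V_{\varepsilon}}\,\delta_{\Omega}\,|T_{\varepsilon}(u_{1},u_{2})|\,k(\cdot,u_{1})^{-1}k(\cdot,u_{2})^{-1}\,d\lambda$. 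If $\varepsilon'\lesssim\varepsilon$ then $P_{\varepsilon'}(\zeta_{0})\cap\Omega\subset\{\delta_{\Omega}<C\varepsilon'\}$ is disjoint from $V_{\varepsilon}\subset\{\delta_{\Omega}\gtrsim\varepsilon\}$, so $\mu_{\varepsilon}(P_{\varepsilon'}(\zeta_{0}))=0$. If $\varepsilon'\gtrsim\varepsilon$, insert the pointwise bound, use the geometric comparisons to move the factor $\delta_{\Omega}(z)k(z,u_{1}(z))^{-1}k(z,u_{2}(z))^{-1}$ inside the $\zeta$-integral as $\simeq\delta_{\Omega}(\zeta)k(\zeta,u_{1}(\zeta))^{-1}k(\zeta,u_{2}(\zeta))^{-1}$, write $d|T(u_{1}(z),u_{2}(z))|(\zeta)$ against the fixed measure $|T|$ through the densities $\theta_{K}$, and split the frozen pairing into its value at $\zeta$ plus an oscillation error controlled by $|u_{i}(z)-u_{i}(\zeta)|\lesssim\varepsilon$. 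By Fubini the main term is $\le\nu_{u_{1},u_{2}}(\widetilde{P})$, where $\nu_{u_{1},u_{2}}:=\delta_{\Omega}|T(u_{1},u_{2})|\,k(\cdot,u_{1})^{-1}k(\cdot,u_{2})^{-1}$ is exactly the measure entering the definition of $\|T\|_{W^{1}(\Omega)}$ and $\widetilde{P}:=P_{\varepsilon'}(\zeta_{0})+B(0,\varepsilon/2)\subset P_{C\varepsilon'}(\zeta_{0})$ (all radii of $P_{\varepsilon'}(\zeta_{0})$ are $\gtrsim\varepsilon'\gtrsim\varepsilon$); hence, by the doubling of the $\sigma$-measures of polydiscs, the main term is $\lesssim\|T\|_{W^{1}(\Omega)}\,\sigma(P_{\varepsilon'}(\zeta_{0})\cap\partial\Omega)$. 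For the error term one expands each vector $u_{i}(z)-u_{i}(\zeta)$ in the $(\zeta,\delta_{\Omega}(\zeta))$-extremal frame $(e_{l}(\zeta))_{l}$: this produces, direction by direction, the measures $\nu_{e_{l},u_{2}}$ and $\nu_{u_{1},e_{l}}$ carrying their own weights $k(\cdot,e_{l})^{-1}$, each multiplied by a factor $\lesssim|u_{i}(z)-u_{i}(\zeta)|\cdot k(\zeta,e_{l}(\zeta))/k(\zeta,u_{i}(z))\lesssim\varepsilon\cdot\varepsilon^{-(1-1/m)}=\varepsilon^{1/m}$; so the error contributes $\lesssim\varepsilon^{1/m}\,\|T\|_{W^{1}(\Omega)}\,\sigma(P_{\varepsilon'}(\zeta_{0})\cap\partial\Omega)$. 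Taking suprema over $\zeta_{0},\varepsilon'$ and over $u_{1},u_{2}$ completes the proof.

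The only real difficulty is this last step. The weights $k(\cdot,u)$ are not comparable across directions — their ratio can be as large as $\delta_{\Omega}^{-(1-1/m)}$ — so freezing the vector fields inside the convolution produces error terms whose $k$-weights are badly mismatched with the ones available from $\|T\|_{W^{1}(\Omega)}$. The resolution is the budget $\varepsilon^{1/m}=\varepsilon\cdot\varepsilon^{-(1-1/m)}\to0$: since $V_{\varepsilon}$ sits at depth $\gtrsim\varepsilon$, the mismatch is at most $\varepsilon^{-(1-1/m)}$, while the oscillation of a smooth unit field over a ball of radius $\varepsilon/2$ is $O(\varepsilon)$; this forces one to expand the oscillation in an extremal frame rather than the standard basis, so as not to throw away the anisotropic gain. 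Everything else is the routine bookkeeping of the anisotropic geometry recalled in \secref{Geometry-of-lineally}.
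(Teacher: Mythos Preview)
Your argument can be made to work, but it takes a detour that the paper avoids, and the detour as written has a gap. The paper does not split into a main term plus an oscillation error: after moving the weights $\delta_{\Omega}(z)$ and $k(z,u_i(z))$ from $z$ to $\zeta$ (using only $k(z,u(z))\simeq k(\zeta,u(z))$, not $\simeq k(\zeta,u(\zeta))$), it performs the change of variables $\xi=\zeta-z$ and rewrites everything in terms of the \emph{translated} fields $u_{\xi}(\zeta)=u(\zeta-\xi)$, $v_{\xi}(\zeta)=v(\zeta-\xi)$. For each fixed $\xi$ these are smooth never-vanishing vector fields on $\Omega$, hence admissible test fields in the supremum defining $\|T\|_{W^{1}(\Omega)}$; the inner integral is then directly $\le\|T\|_{W^{1}(\Omega)}\sigma\bigl(P_{Kt}(Z)\cap\partial\Omega\bigr)$, and averaging over $\xi$ finishes the proof with no error term.

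Your error-term strategy (freeze $u_i$ at $z$, then compare with the value at $\zeta$) is a legitimate alternative, but not as you wrote it. You expand $u_i(z)-u_i(\zeta)$ in the $(\zeta,\delta_{\Omega}(\zeta))$-extremal frame $\bigl(e_l(\zeta)\bigr)_l$ and then invoke $\nu_{e_l,u_2}(\widetilde P)\le\|T\|_{W^{1}(\Omega)}\sigma(\widetilde P\cap\partial\Omega)$. This last step is not justified: the fields $\zeta\mapsto e_l(\zeta)$ are \emph{not} smooth (the paper stresses, just before \propref{comparison-carleson-norms-smooth-forms}, that no smooth choice of extremal basis exists), so they are not admissible in the definition of $\|T\|_{W^{1}(\Omega)}$. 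Your closing remark that one is ``forced'' to use the extremal frame rather than the standard basis is actually backwards. Expanding in the canonical basis $(f_l)_l$ of $\mathbb{C}^n$ gives smooth constant test fields; since $k(\zeta,v)\in\bigl[c\,\delta_{\Omega}(\zeta)^{1-1/m},C\bigr]$ for every unit $v$, the ratio $k(\zeta,f_l)/k(\zeta,u_i(z))$ is still $\lesssim\varepsilon^{-(1-1/m)}$, and you recover exactly the same budget $\varepsilon\cdot\varepsilon^{-(1-1/m)}=\varepsilon^{1/m}$ without the smoothness obstruction. With that one-word fix (standard basis instead of extremal frame) your proof goes through; it simply does more work than the paper's change of variables.
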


\begin{proof}
To simplify the notations, we make the proof for $T$ of degree $\left(1,1\right)$,
that is $T=\sum_{i,j}T_{i,j}dz_{i}\wedge d\overline{z}_{j}$ and $T_{\varepsilon}=\sum_{i,j}T_{i,j}^{\varepsilon}dz_{i}\wedge d\overline{z}_{j}$
with $T_{i,j}^{\varepsilon}(z)=\int_{\left\{ \left|z-\zeta\right|<\nicefrac{1}{2}\right\} }\varphi_{\varepsilon}(\zeta-z)dT_{i,j}(\zeta)$.

Let $Z\in\partial\Omega$. If $t<c\varepsilon$ ($c$ small enough
depending only on $\Omega$) then $B(Z,t)\cap V_{\varepsilon}=\emptyset$.
Let us assume $c\varepsilon<t\leq\varepsilon_{0}$. We have to estimate
\begin{eqnarray*}
I & = & \int\chi(z)\frac{\delta_{\Omega}(z)}{k(z,u)k(z,v)}\left|\sum_{i,j}T_{i,j}^{\varepsilon}(z)u_{i}(z)\overline{v}_{j}(z)\right|d\lambda(z)\\
 & = & \sup_{\left|f\right|\leq1}\int f(z)\chi(z)\frac{\delta_{\Omega}(z)}{k(z,u(z))k(z,v(z))}\left(\sum_{i,j}T_{i,j}^{\varepsilon}(z)u_{i}(z)\overline{v}_{j}(z)\right)d\lambda(z)
\end{eqnarray*}
where $\chi$ is the characteristic function of $B(Z,t)\cap V_{\varepsilon}$
and $u$ and $v$ are smooth vector fields never vanishing in $\Omega$.
Using the definition of $T_{\varepsilon}$ we get
\[
I=\sup_{\left|f\right|\leq1}\int\sum_{i,j}\left(\int f(z)\chi(z)\frac{\delta_{\Omega}(z)u_{i}(z)\overline{v}_{j}(z)}{k(z,u(z))k(z,v(z))}\varphi_{\varepsilon}(\zeta-z)d\lambda(z)\right)dT_{i,j}(\zeta).
\]

Note now that the function 
\[
z\mapsto f(z)\chi(z)\frac{\delta_{\Omega}(z)u_{i}(z)\overline{v}_{j}(z)}{k(z,u(z))k(z,v(z))}\varphi_{\varepsilon}(\zeta-z)
\]
 is supported in $B(Z,t)\cap\left\{ \delta_{\Omega}>\nicefrac{\varepsilon}{2}\right\} $.
Moreover, for $z\in V_{\varepsilon}$ and $\left|\zeta-z\right|<\nicefrac{\varepsilon}{2}$,
$\delta_{\Omega}(z)\simeq\delta_{\Omega}(\zeta)$, and, $\zeta\in P\left(z,K_{1}\delta_{\Omega}(z)\right)$.
Then by (1) of \lemref{3.4-maj-deriv-rho-equiv-tho-i-z-zeta} and
(4) of \lemref{basic-properties-geometry}, $k(z,u(z))\simeq k(\zeta,u(z))$
and $k(z,v(z))\simeq k(\zeta,v(z))$, so
\[
I\lesssim\sup_{\left|f\right|\leq1}\int\sum_{i,j}\left(\int f(z)\chi(z)\frac{\delta_{\Omega}(\zeta)u_{i}(z)\overline{v}_{j}(z)}{k(\zeta,u(z))k(\zeta,v(z))}\varphi_{\varepsilon}(\zeta-z)d\lambda(z)\right)dT_{i,j}(\zeta).
\]

Making the change of variables $\zeta-z=\xi$ and applying Fubini
theorem, we get ( $\chi(\zeta-\xi)\neq0$ implies $\zeta\in P(Z,Kt)$)
\begin{eqnarray*}
I & \lesssim & \int\varphi_{\varepsilon}(\xi)\left[\sup_{\left|f\right|\leq1}\int f(\zeta-\xi)\chi(\zeta-\xi)\frac{\delta_{\Omega}(\zeta)}{k(\zeta,u(\zeta-\xi))k(\zeta,v(\zeta-\xi))}\right.\\
 &  & \hspace{5.3cm}\left.\sum_{i,j}u_{i}(\zeta-\xi)\overline{v}_{j}(\zeta-\xi)dT_{i,j}(\zeta)\right]d\lambda(\xi)\\
 & = & \int\varphi_{\varepsilon}(\xi)\left[\int_{B(Z,Kt)}\chi(\zeta-\xi)\frac{\delta_{\Omega}(\zeta)}{k(\zeta,u(\zeta-\xi))k(\zeta,v(\zeta-\xi))}\right.\\
 &  & \hspace{5.7cm}\left.d\left|\sum_{i,j}\tau_{\xi}\left(u_{i}\right)\tau_{\xi}\left(\overline{v}_{j}\right)T_{i,j}\right|(\zeta)\right]d\lambda(\xi).
\end{eqnarray*}
where $\tau_{\xi}\left(u_{i}\right)(\zeta)=u_{i}(\zeta-\xi)$, $\tau_{\xi}\left(v_{i}\right)(\zeta)=v_{i}(\zeta-\xi)$.

Finally, (\ref{eq:polydiscs-pseudodistance}) gives, denoting $u_{\xi}(\zeta)=u(\zeta-\xi)$
and $v_{\xi}(\zeta)=v(\zeta-\xi)$ 
\[
I\lesssim\sup_{\left|\xi\right|<\nicefrac{\varepsilon}{2}}\int_{B(Z,Kt)\cap\left\{ \delta_{\Omega}>\nicefrac{\varepsilon}{2}\right\} }\frac{\delta_{\Omega}(\zeta)}{k(\zeta,u_{\xi}(\zeta))k(\zeta,v_{\xi}(\zeta))}d\left|T(u_{\xi},v_{\xi})\right|
\]
which concludes the proof, as the smooth vector fields $u_{\xi}$
and $v_{\xi}$ can be viewed as smooth vector fields in $\Omega$
never vanishing, and, for $\beta$ small enough, $V$ is contained
in the union of the tents.
\end{proof}
If $T$ is globally defined in $\Omega$, then the forms $T_{\varepsilon}$
are defined in $\left\{ z\in\Omega\mbox{ such that }\delta_{\Omega}(z)>\varepsilon\right\} $
so there exists a constant $C$ (depending only on $\rho$) such that
they are defined in $\Omega^{\varepsilon}=\left\{ \rho<-C\varepsilon\right\} $,
$\varepsilon$ small enough, but \emph{they are not} Carleson currents
in $\Omega^{\varepsilon}$ in general. Then to be able to use this
regularization procedure in the last section we have to introduce
a notion of $s$-Carleson current.

Let $s>0$ small. We say that a measure $\mu$ in $\Omega$ is a $s$-Carleson
measure if
\[
\left\Vert \mu\right\Vert _{W_{s}^{1}(\Omega)}:=\sup_{z\in\partial\Omega,\,s<\varepsilon<\varepsilon_{0}}\frac{\left|\mu\right|\left(P_{\varepsilon}(z)\cap\Omega\right)}{\sigma\left(P_{\varepsilon}(z)\cap\partial\Omega\right)}+\left|\mu\right|\left(\Omega\right)<+\infty,
\]
and we say that a $1$-current $\omega$ of order zero is a $s$-Carleson
current in $\Omega$ if
\[
\left\Vert \omega\right\Vert _{W_{s}^{1}(\Omega)}:=\sup_{u}\left\Vert \frac{\left|\omega(u)\right|}{k\left(\cdot,u\right)}\right\Vert _{W_{s}^{1}(\Omega)}+\left|\omega\right|(\Omega)<+\infty.
\]

Then:
\begin{spprop}
\label{prop:regularization-global}There exists a constant $C$ depending
only on $\rho$ such that, if $T$ is a closed Carleson current in
$\Omega$ then, for $\varepsilon$ small, the closed forms $T_{\varepsilon}$
are $\varepsilon$-Carleson currents in $\Omega^{\varepsilon}=\left\{ \rho<-C\varepsilon\right\} $
and $\left\Vert T_{\varepsilon}\right\Vert _{W_{\varepsilon}^{1}(\Omega^{\varepsilon})}\lesssim\left\Vert T\right\Vert _{W^{1}(\Omega)}$.\end{spprop}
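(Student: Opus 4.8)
The plan is to deduce this from \propref{regularization-local} by a localization, checking that all the estimates of that proof survive when $\Omega$ is replaced by $\Omega^{\varepsilon}=\{\rho<-C\varepsilon\}$ and when the scales are restricted to $\varepsilon'>\varepsilon$. First, since $\varphi$ is supported in $\{|z|<\nicefrac{1}{2}\}$, the coefficients $T_{i,j}^{\varepsilon}=T_{i,j}*\varphi_{\varepsilon}$ are smooth on $\{\delta_{\Omega}>\nicefrac{\varepsilon}{2}\}$, hence on $\Omega^{\varepsilon}$ provided $C$ is large enough (using $\left|\rho\right|\simeq\delta_{\Omega}$ on $W$); we fix $C$ large enough that moreover $\delta_{\Omega}(z)\geq\varepsilon$ for every $z\in\Omega^{\varepsilon}$. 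Since convolution commutes with $d$ and $T$ is closed, $T_{\varepsilon}$ is closed. The term $(\delta_{\Omega}|T_{\varepsilon}|)(\Omega^{\varepsilon})$ is $\lesssim(\delta_{\Omega}|T|)(\Omega)$ by Fubini, because $\varphi_{\varepsilon}(\zeta-z)\neq0$ together with $z\in\Omega^{\varepsilon}$ forces $\delta_{\Omega}(z)\simeq\delta_{\Omega}(\zeta)$ (both $\gtrsim\varepsilon$ while $|\zeta-z|<\nicefrac{\varepsilon}{2}$). It then remains to bound, uniformly over smooth never vanishing $u,v$ and over $Z\in\partial\Omega^{\varepsilon}$, $\varepsilon<\varepsilon'<\varepsilon_{0}$, the weighted mass of $T_{\varepsilon}$ on $P_{\varepsilon'}(Z)\cap\Omega^{\varepsilon}$ by $\|T\|_{W^{1}(\Omega)}\,\sigma(P_{\varepsilon'}(Z)\cap\partial\Omega^{\varepsilon})$.

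Write $1=\chi_{0}+\sum_{k}\chi_{k}$ for a smooth partition of unity in which $\chi_{0}$ is supported in $\{\delta_{\Omega}>\nicefrac{\beta\delta_{1}}{2}\}$ and each $\chi_{k}$ is supported in a small neighbourhood of a boundary point, so that $T_{\varepsilon}=(\chi_{0}T)_{\varepsilon}+\sum_{k}(\chi_{k}T)_{\varepsilon}$. For $\varepsilon$ small $(\chi_{0}T)_{\varepsilon}$ is supported in a fixed compact subset of $\Omega$ and has coefficients that are bounded measures of total mass $\lesssim\|T\|_{W^{1}(\Omega)}$, hence is an $\varepsilon$-Carleson current in $\Omega^{\varepsilon}$ of norm $\lesssim\|T\|_{W^{1}(\Omega)}$ (cf. the remark on relatively compact supports). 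For each boundary piece $\chi_{k}T$ --- a, not necessarily closed, Carleson current supported near a boundary point, of norm $\lesssim\|T\|_{W^{1}(\Omega)}$ --- we reproduce verbatim the computation in the proof of \propref{regularization-local}: dualize against $|f|\leq1$, insert the definition of $(\chi_{k}T)_{\varepsilon}$, use that for $z\in\Omega^{\varepsilon}$ and $|\zeta-z|<\nicefrac{\varepsilon}{2}$ one has $\delta_{\Omega}(z)\simeq\delta_{\Omega}(\zeta)$ and $\zeta\in P(z,K_{1}\delta_{\Omega}(z))$, so that $k(z,u(z))\simeq k(\zeta,u(z))$ and $k(z,v(z))\simeq k(\zeta,v(z))$ by \lemref{3.4-maj-deriv-rho-equiv-tho-i-z-zeta}(1) and \lemref{basic-properties-geometry}(4), then change variables $\xi=\zeta-z$ and apply Fubini. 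This yields, exactly as there,
\[
\int_{P_{\varepsilon'}(Z)\cap\Omega^{\varepsilon}}\frac{\delta_{\Omega}\,\bigl|(\chi_{k}T)_{\varepsilon}(u,v)\bigr|}{k(\cdot,u)\,k(\cdot,v)}\;\lesssim\;\sup_{|\xi|<\nicefrac{\varepsilon}{2}}\int_{P_{K\varepsilon'}(Z)\cap\{\delta_{\Omega}>\nicefrac{\varepsilon}{2}\}}\frac{\delta_{\Omega}}{k(\cdot,u_{\xi})\,k(\cdot,v_{\xi})}\,d\bigl|(\chi_{k}T)(u_{\xi},v_{\xi})\bigr|,
\]
with $u_{\xi}(\zeta)=u(\zeta-\xi)$ and $v_{\xi}(\zeta)=v(\zeta-\xi)$ again smooth never vanishing vector fields on $\Omega$.

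The genuinely new step is to control the right-hand side by $\|T\|_{W^{1}(\Omega)}\,\sigma(P_{\varepsilon'}(Z)\cap\partial\Omega^{\varepsilon})$, and this is where the restriction $\varepsilon'>\varepsilon$ is essential. Since $Z\in\partial\Omega^{\varepsilon}$ we have $\delta_{\Omega}(Z)\simeq\varepsilon\lesssim\varepsilon'$, so there is $Z'\in\partial\Omega$ with $Z\in P_{c\varepsilon'}(Z')$; by \eqref{eq:inclusion-polydisques-pseudo} and \eqref{eq:polydiscs-pseudodistance} the polydiscs $P_{K\varepsilon'}(Z)$ and $P_{K'\varepsilon'}(Z')$ are then comparable, so the right-hand side is $\lesssim\int_{P_{K'\varepsilon'}(Z')\cap\Omega}\frac{\delta_{\Omega}}{k(\cdot,u_{\xi})k(\cdot,v_{\xi})}\,d|T(u_{\xi},v_{\xi})|\leq\|T\|_{W^{1}(\Omega)}\,\sigma(P_{K'\varepsilon'}(Z')\cap\partial\Omega)$ by definition of the Carleson norm, and finally $\sigma(P_{K'\varepsilon'}(Z')\cap\partial\Omega)\simeq\sigma(P_{\varepsilon'}(Z)\cap\partial\Omega^{\varepsilon})$ because $\partial\Omega$ and $\partial\Omega^{\varepsilon}$ are level sets of $\rho$ at mutual distance $\simeq\varepsilon\lesssim\varepsilon'$ and the surface measure of a polydisc slice of a level set varies boundedly as the level moves within that distance (together with the doubling of $\sigma$ with respect to the polydiscs). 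Summing over $k$ and adding the interior term gives the assertion. The main obstacle is precisely this last geometric comparison: one must ensure that passing from $\Omega$ and $\partial\Omega$ to $\Omega^{\varepsilon}$ and the nearby level set $\partial\Omega^{\varepsilon}$ costs only constants independent of $\varepsilon$ in \lemref{basic-properties-geometry}, \lemref{3.4-maj-deriv-rho-equiv-tho-i-z-zeta} and in the surface-measure estimate --- which is exactly where the finite-type hypothesis on all the level sets of $\rho$, and the cutoff $\varepsilon'>\varepsilon$ (which keeps $\delta_{\Omega}(Z)\lesssim\varepsilon'$), are used; without that cutoff the Carleson ratio genuinely degenerates as $\varepsilon'\to0$.
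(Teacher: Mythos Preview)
Your argument is correct and matches the paper's approach: the key geometric step --- projecting $Z\in\partial\Omega^{\varepsilon}$ to a point of $\partial\Omega$ and using $\varepsilon'>\varepsilon$ to compare the polydiscs and the surface measures on the two level sets --- is exactly what the paper does. The paper's proof is shorter only because it invokes \propref{regularization-local} as a black box (yielding $\|T_{\varepsilon}\|_{W^{1}(\Omega)}\lesssim\|T\|_{W^{1}(\Omega)}$ directly) rather than re-deriving that estimate after a partition of unity, so that only the comparison $\|T_{\varepsilon}\|_{W_{\varepsilon}^{1}(\Omega^{\varepsilon})}\lesssim\|T_{\varepsilon}\|_{W^{1}(\Omega)}$ remains.
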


\begin{proof}
By \propref{regularization-local} it suffices to show that $\left\Vert T_{\varepsilon}\right\Vert _{W_{\varepsilon}^{1}\left(\Omega^{\varepsilon}\right)}\lesssim\left\Vert T_{\varepsilon}\right\Vert _{W^{1}(\Omega)}$.
Let $z\in\partial\Omega^{\varepsilon}$ and let $Z$ be the projection
of $z$ on $\partial\Omega$. For $t>\varepsilon$, $P(z,t)\subset P(Z,Kt)$
and the proposition follows (\ref{eq:comp-tau-epsilon-tau-lambda-epsilon})
and \lemref{3.4-maj-deriv-rho-equiv-tho-i-z-zeta}.
\end{proof}

Finally, to solve the $d$ and $\overline{\partial}$ equations with
good estimates, we need to compare the notion of Carleson current
for smooth currents of degree $2$ or $1$ $T$ with a convenient
punctual norm $\left\Vert \left\Vert T(\zeta)\right\Vert _{\mathbbmss k}d\lambda\right\Vert _{W^{1}(\Omega)}$:
we define
\[
\left\Vert T(\zeta)\right\Vert _{\mathbbmss k}=\sup_{v_{i}\in\mathbb{C}^{n},\,\left\Vert v_{i}\right\Vert =1}\frac{\left|T\left(v_{1},v_{2}\right)(\zeta)\right|}{k\left(\zeta,v_{1}\right)k\left(\zeta,v_{2}\right)},
\]
for forms of degree $2$ and
\[
\left\Vert T(\zeta)\right\Vert _{\mathbbmss k}=\sup_{\left\Vert v\right\Vert =1}\frac{\left|T(v)(\zeta)\right|}{k\left(\zeta,v\right)},
\]
 for forms of degree $\left(0,1\right)$.

If there were smooth vector fields $\left(\widetilde{e}_{i}\right)_{1\leq i\leq n}$
such that, at each point $z$, $\left(\widetilde{e}_{i}(z)\right)_{i}$
is a $\left(z,\delta_{\Omega}(z)\right)$-extremal basis, this comparison
would be immediate and, as noted by several authors, many points of
the theory of convex (and lineally convex) domains of finite type
would be simplified. Unfortunately this is not the case, and, in the
case of convex domains of finite type and smooth currents, W. Alexandre
overcomes this difficulty using a base of the Bergman metric (and
estimates of this metric proved by J. Mc Neal, see \cite[Proposition 2.12]{Ale17}).
The same result could be proved in our context of lineally convex
domains using the results of \cite{CD08}. However, we do not use
this method because it is quite easy to show, in general, that the
$W^{1}(\Omega)$-norm of a current is controlled by vector fields
``almost extremal'':
\begin{spprop}
\label{prop:comparison-carleson-norms-smooth-forms}Let $\psi$ be
a current of order zero of degree $2$ or $1$ in an open set $U\subset\left\{ \delta_{\Omega}(z)<\beta\delta_{1}\right\} $
of $\Omega$.
\begin{enumerate}
\item There exist $n$ smooth vector fields $u_{i}$ never vanishing in
$\Omega$ such that, if $\psi$ is of degree $2$, 
\[
\left\Vert \psi\right\Vert _{W^{1}(\Omega)}\simeq\sum_{i,j}\left\Vert \frac{\delta_{\Omega}\left|\psi\left(u_{i},u_{j}\right)\right|}{k(\cdot,u_{i})k(\cdot,u_{j})}\right\Vert _{W^{1}(\Omega)},
\]
the vector fields $u_{i}$ coinciding, outside a set of $\left|\psi\right|$-measure
arbitrary small, with extremal basis in the sense of geometrically
separated domains, and 
\[
\left\Vert \psi\right\Vert _{W^{1}(\Omega)}\simeq\sum_{i}\left\Vert \frac{\left|\psi(u_{i})\right|}{k(\cdot,u_{i})}\right\Vert _{W^{1}(\Omega)}
\]
if it is of degree $1$, the constants in the equivalence being independent
of $\psi$.
\item Moreover, if $\psi$ is smooth. Then:

\begin{enumerate}
\item $\left\Vert \psi\right\Vert _{W^{1}(\Omega)}\simeq\left\Vert \delta_{\Omega}\left\Vert \chi_{U}\psi\right\Vert _{\mathbbmss k}d\lambda\right\Vert _{W^{1}(\Omega)}$
if $\psi$ is of degree $2$, and $\left\Vert \psi\right\Vert _{W^{1}(\Omega)}\simeq\left\Vert \left\Vert \chi_{U}\psi\right\Vert _{\mathbbmss k}d\lambda\right\Vert _{W^{1}(\Omega)}$
if not, the constants in the equivalence being independent of $\psi$;
\item for $s>0$, if $\psi$ is of degree $1$, $\left\Vert \psi\right\Vert _{W_{s}^{1}(\Omega)}\simeq\left\Vert \left\Vert \chi_{U}\psi\right\Vert _{\mathbbmss k}d\lambda\right\Vert _{W_{s}^{1}(\Omega)}$
the constants in the equivalence being independent of $\psi$.
\end{enumerate}
\end{enumerate}
\end{spprop}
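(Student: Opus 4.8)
The plan is to reduce the statement to a pointwise (resp.\ measure-theoretic) comparison between an arbitrary complex direction and an extremal basis, and then to replace the non-smooth extremal basis by honest smooth vector fields produced by an \emph{anisotropic} regularization. \emph{Step 1 (reduction to extremal bases):} fix $\zeta$ close to $\partial\Omega$ and let $\left(v_{1},\dots,v_{n}\right)$ be a $\left(\zeta,\delta_{\Omega}(\zeta)\right)$-extremal basis (in the extended sense of the Remark after \lemref{3.4-maj-deriv-rho-equiv-tho-i-z-zeta}). For a unit vector $\gamma=\sum_{j}a_{j}v_{j}$, \lemref{basic-properties-geometry}\,(\ref{geometry-3}) gives $k(\zeta,\gamma)\simeq\sum_{j}\left|a_{j}\right|k(\zeta,v_{j})$; together with the elementary bound
\[
\sum_{i,j}b_{i}c_{j}d_{ij}\le\Bigl(\max_{i,j}\tfrac{d_{ij}}{e_{i}f_{j}}\Bigr)\Bigl(\sum_{i}b_{i}e_{i}\Bigr)\Bigl(\sum_{j}c_{j}f_{j}\Bigr)\qquad(b_{i},c_{j},d_{ij},e_{i},f_{j}\ge0),
\]
applied to the expansion of $\psi\left(\gamma_{1},\gamma_{2}\right)$ in the coframe dual to $\left(v_{i}\right)$, this yields, coefficient by coefficient and hence for the total variation measures,
\[
\frac{\left|\psi\left(\gamma_{1},\gamma_{2}\right)\right|}{k(\cdot,\gamma_{1})k(\cdot,\gamma_{2})}\lesssim\max_{i,j}\frac{\left|\psi\left(v_{i},v_{j}\right)\right|}{k(\cdot,v_{i})k(\cdot,v_{j})}
\]
for all unit $\gamma_{1},\gamma_{2}$, the reverse inequality (with constant $1$) being trivial. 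For smooth $\psi$ this reads $\left\Vert \psi(\zeta)\right\Vert _{\mathbbmss k}\simeq\max_{i,j}\frac{\left|\psi\left(v_{i},v_{j}\right)(\zeta)\right|}{k(\zeta,v_{i})k(\zeta,v_{j})}$ for any extremal basis at $\zeta$; the degree-one case is identical with a single index.

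\emph{Step 2 (the easy halves):} since the quantities are invariant under rescaling, take $\left|u_{i}\right|\equiv1$. Pointwise $\frac{\left|\psi\left(u_{1},u_{2}\right)(\zeta)\right|}{k(\zeta,u_{1})k(\zeta,u_{2})}\le\left\Vert \psi(\zeta)\right\Vert _{\mathbbmss k}$, so $\left\Vert \psi\right\Vert _{W^{1}(\Omega)}\lesssim\left\Vert \delta_{\Omega}\left\Vert \chi_{U}\psi\right\Vert _{\mathbbmss k}d\lambda\right\Vert _{W^{1}(\Omega)}$ (the mass term is controlled since $k\le C$ gives $\chi_{U}\left|\psi\right|\lesssim\left\Vert \chi_{U}\psi\right\Vert _{\mathbbmss k}\lambda$), with the obvious modification in degree one and for the $W_{s}^{1}$-norm. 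In (1), each summand on the right is $\le\left\Vert \psi\right\Vert _{W^{1}(\Omega)}$ by definition of the supremum, hence the right-hand side is $\le n^{2}\left\Vert \psi\right\Vert _{W^{1}(\Omega)}$.

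\emph{Step 3 (the converse inequalities):} by a measurable selection, fix a Borel field $\zeta\mapsto\left(w_{1}(\zeta),\dots,w_{n}(\zeta)\right)$ of $\left(\zeta,\delta_{\Omega}(\zeta)\right)$-extremal bases, and regularize it \emph{along the geometry}: let $u_{i}(\zeta)$ be a smooth average of $w_{i}$ over $P\left(\zeta,c\delta_{\Omega}(\zeta)\right)$ with $c$ small. By \lemref{3.4-maj-deriv-rho-equiv-tho-i-z-zeta} all vectors averaged are uniformly equivalent to the $\left(\zeta,\delta_{\Omega}(\zeta)\right)$-extremal basis, so for $c$ small $\left(u_{i}(\zeta)\right)_{i}$ is again an ``almost extremal'' basis \emph{at every point}, the $u_{i}$ are smooth, never vanishing, and coincide with a genuine extremal basis outside a set of arbitrarily small $\left|\psi\right|$-measure. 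Applying Step 1 with $v_{i}=u_{i}(\zeta)$ at each $\zeta$ gives, for every smooth non-vanishing $\Phi_{1},\Phi_{2}$ and every extremal polydisk $P_{\varepsilon}(Z)$,
\[
\int_{P_{\varepsilon}(Z)\cap\Omega}\frac{\delta_{\Omega}\left|\psi\left(\Phi_{1},\Phi_{2}\right)\right|}{k(\cdot,\Phi_{1})k(\cdot,\Phi_{2})}\lesssim\sum_{i,j}\int_{P_{\varepsilon}(Z)\cap\Omega}\frac{\delta_{\Omega}\left|\psi\left(u_{i},u_{j}\right)\right|}{k(\cdot,u_{i})k(\cdot,u_{j})},
\]
and likewise for the global masses; dividing by $\sigma\left(P_{\varepsilon}(Z)\cap\partial\Omega\right)$ and taking suprema yields (1). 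Together with Step 1 and the equivalence $\max_{i,j}\frac{\left|\psi\left(u_{i},u_{j}\right)\right|}{k(\cdot,u_{i})k(\cdot,u_{j})}\simeq\left\Vert \psi\right\Vert _{\mathbbmss k}$ this gives the remaining inequality of (2)(a); (2)(b) follows the same way, the constraint $\varepsilon>s$ keeping the boundary masses of the relevant polydisks bounded below.

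\emph{Main obstacle.} The real difficulty is that an extremal basis cannot be chosen to depend smoothly (or even continuously) on $\zeta$, whereas $\left\Vert \cdot\right\Vert _{W^{1}(\Omega)}$ is a supremum of averages over arbitrarily small polydisks accumulating at $\partial\Omega$; an isotropic mollification of the $w_{i}$ would spoil extremality exactly there. Step 3 therefore mollifies at the anisotropic scale $\delta_{\Omega}(\zeta)$ and uses \lemref{3.4-maj-deriv-rho-equiv-tho-i-z-zeta} to keep the regularized fields \emph{uniformly} almost-extremal, which is what makes all implied constants independent of $\psi$; the assertion that the $u_{i}$ agree with a genuine extremal basis off a set of small $\left|\psi\right|$-measure is then an easy by-product of the construction.
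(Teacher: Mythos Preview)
Your Steps 1 and 2 are essentially the paper's Lemma~1 and the ``trivial'' direction, and they are fine. The gap is in Step~3.

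The claim that averaging a measurable field of extremal bases over $P(\zeta,c\delta_{\Omega}(\zeta))$ produces smooth, never-vanishing, \emph{uniformly almost-extremal} vector fields is not justified, and in fact fails. \lemref{3.4-maj-deriv-rho-equiv-tho-i-z-zeta} only says that $\tau\bigl(\zeta,v_{i}(\xi,\cdot),\delta_{\Omega}(\zeta)\bigr)\simeq\tau_{i}(\zeta,\cdot)$ for $\xi\in P(\zeta,\delta_{\Omega}(\zeta))$; it says nothing about the \emph{vectors} $v_{i}(\xi,\cdot)$ being close to $v_{i}(\zeta,\cdot)$. When several $\tau_{i}$ are comparable the extremal basis is highly non-unique (any rotation in the corresponding subspace is again extremal), and even when it is unique there is a sign ambiguity in each $v_{i}$. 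A measurable selection can therefore oscillate wildly on $P(\zeta,c\delta_{\Omega}(\zeta))$, and its average can be arbitrarily small (or zero), or can point in a direction whose $\tau$ is not comparable to $\tau_{i}$. So neither ``never vanishing'' nor ``almost extremal'' follows from your construction, and the assertion that the $u_{i}$ coincide with a genuine extremal basis off a set of small $|\psi|$-measure has no support at all: an average of $w_{i}$ over a ball equals $w_{i}$ only where $w_{i}$ is locally constant. Finally, smoothness of the average is also unclear, since the region $P(\zeta,c\delta_{\Omega}(\zeta))$ itself depends on a choice of extremal basis at $\zeta$ and need not vary smoothly.

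The paper avoids all of this by a different, more combinatorial construction: it takes a minimal covering of $U\cap\Omega$ by polydiscs $P_{i}=P(Z_{i},\delta_{\Omega}(Z_{i})/K)$, fixes one extremal basis $(e_{l}^{j})_{l}$ at each center, and glues these \emph{constant} bases with a partition of unity $(\psi_{j})$ subordinate to a refined system of neighbourhoods. The transition annuli $B_{i}^{s(i)}$ are chosen (by pigeonhole on a large number $N_{i}$ of concentric shells) so that $\sum_{i}|\psi|(B_{i}^{s(i)})$ is arbitrarily small; this is what gives the ``coincide with an extremal basis off a set of small $|\psi|$-measure'' statement. Non-vanishing is forced by adding $\delta_{\Omega}^{2}(w_{j})f_{1}$ to each $e_{l}^{j}$ after normalizing the $f_{1}$-component of $e_{l}^{j}$ to be non-negative, and a second lemma shows that such an $O(\delta_{\Omega}^{2})$ perturbation preserves the estimate of Step~1. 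If you want to repair your argument, this partition-of-unity gluing of \emph{fixed} extremal bases, together with the $O(\delta_{\Omega}^{2})$ perturbation, is the missing ingredient.
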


\begin{rem*}
\mynobreakpar
\begin{enumerate}
\item In \cite[Proposition 2.12]{Ale17} W. Alexandre proved (2) of the
proposition for convex domains of finite type, using \emph{universal}
vector fields (i.e. depending only on $\Omega$ but not on $\psi$)
related to the Bergman metric.
\item The extremal basis in the sense of geometrically separated domains
(\cite{CD08}) are not stricto-sensus extremal basis in J. Mc. Neal
and M. Conrad sense, but they give the same homogeneous space.
\item The equivalences of (2) of the proposition can be proved directly
without using any extremal basis, using simply the continuity of the
functions $z\mapsto\frac{\delta_{\Omega}(z)\psi(u,v)(z)}{k(z,u(z))k(z,v(z))}$
which gives an equivalent ot Lemma 1 below on small euclidean balls.
The final construction of the vectors fields $\left(u_{i}\right)_{i}$
is analog (and easyer).
\item Even if the proof of \propref{Local-solution-d-equation} needs only
the second part of the proposition, we thought that it is interesting
to present the assertion in the general case of non smooth currents.
\end{enumerate}
\end{rem*}

\begin{proof}[Proof of \propref{comparison-carleson-norms-smooth-forms}]
We only do the proof for currents $\psi$ of degree $2$. The inequality
$\gtrsim$ is trivial, so we prove the converse one.
\begin{llemsp}
Let $w\in U$ and $\left(e_{i}\right)_{i}$ be a $\delta_{\Omega}(w)$-extremal
basis at $w$. Let $u$ and $v$ be two smooth non vanishing vectors
fields.
\begin{enumerate}
\item Assume the coefficients of $\psi$ are measures. Then, for every measurable
set $D\subset P\left(w,\delta_{\Omega}(w)\right)\cap U$, we have
\begin{equation}
\int_{D}\frac{\delta_{\Omega}(\xi)d\left|\psi\left(u,v\right)\right|(\xi)}{k(\xi,u(\xi))k(\xi,v(\xi))}\lesssim\sum_{i,j}\int_{D}\frac{\delta_{\Omega}(\xi)d\left|\psi\left(e_{i},e_{j}\right)\right|(\xi)}{k\left(\xi,e_{i}\right)k\left(\xi,e_{j}\right)}.\label{eq:lemma1-comparison-non-smooth-forms}
\end{equation}

\item Moreover, if $\psi$ smooth. Then, for $\xi\in P\left(w,\delta_{\Omega}\right)\cap U$
we have
\begin{equation}
\frac{\left|\psi\left(u,v\right)(\xi)\right|}{k(\xi,u(\xi))k(\xi,v(\xi))}\lesssim\sum_{i,j}\frac{\left|\psi\left(e_{i},e_{j}\right)(\xi)\right|}{k\left(\xi,e_{i}\right)k\left(\xi,e_{j}\right)}.\label{eq:lemma1-comparison-smooth-forms}
\end{equation}

\end{enumerate}
\end{llemsp}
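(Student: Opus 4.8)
The plan is to reduce both inequalities to a single pointwise comparison of the coefficients of $u$ and $v$ in the basis $(e_{i})$, once one knows that $(e_{i})$ controls the geometry not only at $w$ but at every $\xi\in P\left(w,\delta_{\Omega}(w)\right)$. First I would record that the normal radius of $P\left(w,\delta_{\Omega}(w)\right)$ equals $\tau\left(w,\nu,\delta_{\Omega}(w)\right)=\delta_{\Omega}(w)$ (\lemref{basic-properties-geometry}), so a Taylor expansion of $\rho$ at $w$ using the derivative bounds of \lemref{basic-properties-geometry} gives $\left|\rho(\xi)-\rho(w)\right|\lesssim c_{0}\delta_{\Omega}(w)$ on that polydisk; hence, taking $c_{0}$ small, $\delta_{\Omega}(\xi)\simeq\delta_{\Omega}(w)=:\varepsilon$ there, and in particular $\xi$ remains in $U$. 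Then, by \lemref{3.4-maj-deriv-rho-equiv-tho-i-z-zeta} and the remark following it, the orthonormal basis $(e_{i})$ plays the role of a (generalized) $\left(\xi,\delta_{\Omega}(\xi)\right)$-extremal basis: $\tau\left(\xi,e_{i},\delta_{\Omega}(\xi)\right)\simeq\tau_{i}\left(\xi,\delta_{\Omega}(\xi)\right)$, the polydisk built on $(e_{i})$ at scale $\delta_{\Omega}(\xi)$ is equivalent to $P\left(\xi,\delta_{\Omega}(\xi)\right)$, and the properties of \lemref{basic-properties-geometry} are available for $(e_{i})$ at $\xi$.

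The heart of the matter is then the estimate: writing $u(\xi)=\sum_{i}a_{i}(\xi)e_{i}$ with $a_{i}(\xi)=\langle u(\xi),e_{i}\rangle$ (smooth and bounded on the polydisk), and likewise $v(\xi)=\sum_{j}b_{j}(\xi)e_{j}$, one has for every $i,j$
\[
\frac{\left|a_{i}(\xi)\right|}{k\left(\xi,u(\xi)\right)}\lesssim\frac{1}{k\left(\xi,e_{i}\right)},\qquad\frac{\left|b_{j}(\xi)\right|}{k\left(\xi,v(\xi)\right)}\lesssim\frac{1}{k\left(\xi,e_{j}\right)}.
\]
By the remark preceding the Main Theorem we may take $\left|u(\xi)\right|=\left|v(\xi)\right|=1$, so that $k(\xi,\cdot)=\delta_{\Omega}(\xi)/\tau\left(\xi,\cdot,\delta_{\Omega}(\xi)\right)$ and the first inequality is equivalent to $\left|a_{i}(\xi)\right|\,\tau\left(\xi,u(\xi),\delta_{\Omega}(\xi)\right)\lesssim\tau\left(\xi,e_{i},\delta_{\Omega}(\xi)\right)$. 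To obtain it, choose $\lambda\in\mathbb{C}$ with $\left|\lambda\right|<\tau\left(\xi,u(\xi),\delta_{\Omega}(\xi)\right)$ and $\left|\lambda\right|\simeq\tau\left(\xi,u(\xi),\delta_{\Omega}(\xi)\right)$; by \lemref{basic-properties-geometry}, $\xi+\lambda u(\xi)\in CP\left(\xi,\delta_{\Omega}(\xi)\right)$, and since $\xi+\lambda u(\xi)=\xi+\sum_{i}\left(\lambda a_{i}(\xi)\right)e_{i}$ while the polydisk on $(e_{i})$ is equivalent to $P\left(\xi,\delta_{\Omega}(\xi)\right)$, every coordinate obeys $\left|\lambda a_{i}(\xi)\right|\lesssim\tau\left(\xi,e_{i},\delta_{\Omega}(\xi)\right)$, which is the claim. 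Equivalently, expanding $u(\xi)$ in $(e_{i})$ and using the relation $1/\tau\left(\xi,\gamma,\delta_{\Omega}(\xi)\right)\simeq\sum_{i}\left|\langle\gamma,e_{i}\rangle\right|/\tau_{i}\left(\xi,\delta_{\Omega}(\xi)\right)$ of \lemref{basic-properties-geometry} gives the same bound.

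With this in hand the lemma follows formally. I would present the degree-$2$ case on the $(1,1)$-component $\psi=\sum\psi_{ij}\,dz^{i}\wedge d\overline{z}^{j}$, the $(2,0)$- and $(0,2)$-components being treated in the same way (one recovers each pure bidegree of $\psi(u,v)$ from the values of $\psi$ at the $e_{i}$ and at $\pm ie_{i}$, which have the same $k$). Expanding the two slots bilinearly, $\psi(u,v)=\sum_{i,j}a_{i}\overline{b_{j}}\,\psi(e_{i},e_{j})$, an identity of measures with continuous coefficients when the $\psi_{ij}$ are measures and of functions when $\psi$ is smooth, whence $\left|\psi(u,v)\right|\le\sum_{i,j}\left|a_{i}\right|\left|b_{j}\right|\left|\psi(e_{i},e_{j})\right|$. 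For (\ref{eq:lemma1-comparison-smooth-forms}) I divide the pointwise identity by $k\left(\xi,u(\xi)\right)k\left(\xi,v(\xi)\right)$ and apply the two estimates of the previous paragraph; for (\ref{eq:lemma1-comparison-non-smooth-forms}) I multiply the inequality of measures by $\delta_{\Omega}/\left(k(\cdot,u)k(\cdot,v)\right)$, apply those estimates pointwise on $D\subset P\left(w,\delta_{\Omega}(w)\right)\cap U$, and integrate over $D$.

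The only real obstacle is the first paragraph: that a basis extremal at $w$ still governs $\tau$, the polydisks, and hence $k$, at a nearby $\xi$ at the comparable scale $\delta_{\Omega}(\xi)$. This is exactly where \lemref{3.4-maj-deriv-rho-equiv-tho-i-z-zeta}, and underneath it the lineal convexity of finite type through the invariance of the polydisks under change of generalized extremal basis, is indispensable; granting it, the remaining steps are routine, the only bookkeeping being the passage from a degree-$2$ current to its three pure bidegrees.
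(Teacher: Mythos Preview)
Your proof is correct and follows essentially the same route as the paper: decompose $u,v$ on the extremal basis, use \lemref{3.4-maj-deriv-rho-equiv-tho-i-z-zeta} and part (\ref{geometry-3}) of \lemref{basic-properties-geometry} to obtain the key coefficient bound $|a_{i}(\xi)|/k(\xi,u(\xi))\lesssim 1/k(\xi,e_{i})$, and conclude by bilinearity. You supply a bit more scaffolding than the paper (the preliminary $\delta_{\Omega}(\xi)\simeq\delta_{\Omega}(w)$, an alternative polydisk-membership argument for the key bound, and the bidegree bookkeeping via $\pm i e_{i}$), but the substance is the same.
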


\begin{proof}
Decomposing $u$ and $v$ on the basis $\left(e_{i}\right)_{i}$,
we get
\[
\frac{\left|\psi\left(u,v\right)(\xi)\right|}{k(\xi,u(\xi))k(\xi,v(\xi))}\lesssim\frac{\sum\left|\psi\left(e_{i},e_{j}\right)(\xi)\right|\left|u_{i}(\xi)\right|\left|v_{j}(\xi)\right|}{k(\xi,u(\xi))k(\xi,v(\xi))},
\]
if $\psi$ is smooth, and, if not
\[
\int_{D}\frac{\delta_{\Omega}(\xi)d\left|\psi\left(u,v\right)\right|(\xi)}{k(\xi,u(\xi))k(\xi,v(\xi))}\lesssim\sum_{i,j}\int_{D}\frac{\delta_{\Omega}(\xi)\left|u_{i}(\xi)\right|\left|v_{j}(\xi)\right|d\left|\psi\left(e_{i},e_{j}\right)\right|(\xi)}{k(\xi,u(\xi))k(\xi,v(\xi))}.
\]

Now, by (4) of \lemref{basic-properties-geometry} and (2) of \lemref{3.4-maj-deriv-rho-equiv-tho-i-z-zeta},
\begin{eqnarray*}
k(\xi,u(\xi)) & \simeq & \frac{\delta_{\Omega}(\xi)}{\tau\left(\xi,u(\xi),\delta_{\Omega}(w)\right)}\\
 & \simeq & \frac{\delta_{\Omega}(\xi)}{\tau\left(w,u(\xi),\delta_{\Omega}(w)\right)}
\end{eqnarray*}
and, by (2) of \lemref{basic-properties-geometry}, 
\[
\frac{1}{\tau\left(w,u(\xi),\delta_{\Omega}(w)\right)}\simeq\max\frac{\left|u_{i}(\xi)\right|}{\tau_{i}\left(w,\delta_{\Omega}(w)\right)}\mbox{ and }\frac{1}{k(\xi,u(\xi))}\lesssim\min\frac{\tau_{i}\left(w,\delta_{\Omega}(w)\right)}{\left|u_{i}(\xi)\right|}\delta_{\Omega}(\xi)^{-1}.
\]

Then
\begin{eqnarray*}
\frac{\left|u_{i}(\xi)\right|\left|v_{j}(\xi)\right|}{k(\xi,u(\xi))k(\xi,v(\xi))} & \lesssim & \tau_{i}\left(w,\delta_{\Omega}(w)\right)\tau_{j}\left(w,\delta_{\Omega}(w)\right)\delta_{\Omega}(\xi)^{-2}\\
 & \lesssim & \frac{1}{k\left(\xi,e_{i}\right)k\left(\xi,e_{j}\right)}
\end{eqnarray*}
because $\tau_{i}\left(w,\delta_{\Omega}(w)\right)=\tau\left(w,e_{i},\delta_{\Omega}(w)\right)\simeq\tau\left(\xi,e_{i},\delta_{\Omega}(\xi)\right)$
and the lemma is proved.\end{proof}
\begin{llemsp}
Under the conditions of the previous lemma, the inequalities (\ref{eq:lemma1-comparison-smooth-forms})
and (\ref{eq:lemma1-comparison-non-smooth-forms}) are still true
replacing the basis $\left(e_{i}\right)_{i}$ by the basis $\left(e'_{i}\right)_{i}$
where $e'_{i}=e_{i}+\mbox{O}\left(\delta_{\Omega}^{2}(w)\right)$.\end{llemsp}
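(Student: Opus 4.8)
The plan is to re-run the proof of Lemma 1 verbatim with the basis $(e_i)_i$ replaced by $(e'_i)_i$, the only genuinely new point being that a perturbation of size $\mathrm{O}(\delta_\Omega^2(w))$ is negligible at the geometric scales attached to $\delta_\Omega(w)$. First I would record that $\tau_1(w,\delta_\Omega(w))=\delta_\Omega(w)$ and, more generally, $\delta_\Omega(w)\lesssim\tau_i(w,\delta_\Omega(w))\lesssim1$ for every $i$ --- the upper bound being trivial and the lower bound following from the Lipschitz character of $\rho$ on $W$, which gives $\tau(w,v,\varepsilon)\gtrsim\varepsilon$ for every unit vector $v$. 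Writing $e'_i-e_i=\sum_j h_{ij}e_j$ in the orthonormal basis $(e_j)_j$ we have $|h_{ij}|\lesssim\delta_\Omega^2(w)$, hence $\sum_j|h_{ij}|/\tau_j(w,\delta_\Omega(w))\lesssim\delta_\Omega(w)$, which is $\ll1/\tau_i(w,\delta_\Omega(w))$ once $\beta$ (equivalently $\delta_\Omega(w)$) is small enough. Applying property (2) of \lemref{basic-properties-geometry} to $\gamma=e'_i=\sum_j(\delta_{ij}+h_{ij})e_j$ then gives $\tau(w,e'_i,\delta_\Omega(w))\simeq\tau_i(w,\delta_\Omega(w))$.

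Next I would transfer this estimate to an arbitrary point $\xi\in P(w,\delta_\Omega(w))\cap U$. Since $\delta_\Omega(\xi)\simeq\delta_\Omega(w)$, (\ref{eq:comp-tau-epsilon-tau-lambda-epsilon}) gives $\tau(\xi,e'_i,\delta_\Omega(\xi))\simeq\tau(\xi,e'_i,\delta_\Omega(w))$, and, exactly as in the proof of Lemma 1 (through property (2) of \lemref{basic-properties-geometry} and (2) of \lemref{3.4-maj-deriv-rho-equiv-tho-i-z-zeta}), $\tau(\xi,e'_i,\delta_\Omega(w))\simeq\tau(w,e'_i,\delta_\Omega(w))$. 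Combined with the definition of $k$ and the previous step this yields $k(\xi,e'_i)\simeq\delta_\Omega(\xi)/\tau_i(w,\delta_\Omega(w))\simeq k(\xi,e_i)$ on $P(w,\delta_\Omega(w))\cap U$, which replaces the relation $\tau_i(w,\delta_\Omega(w))\simeq\tau(\xi,e_i,\delta_\Omega(\xi))$ used at the end of the proof of Lemma 1.

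It then remains to repeat that proof with $(e'_i)_i$. Since the Gram matrix of $(e'_i)_i$ is $I+\mathrm{O}(\delta_\Omega^2(w))$, for $\beta$ small this family is a basis whose change-of-basis matrices to and from $(e_i)_i$ are $\delta_{ij}+\mathrm{O}(\delta_\Omega^2(w))$; so I would decompose $u=\sum u'_ie'_i$, $v=\sum v'_je'_j$ with $|u'_i|,|v'_j|\lesssim1$, the coordinates of $u$ and $v$ in $(e_i)_i$ differing from $(u'_i)_i$ and $(v'_j)_j$ by $\mathrm{O}(\delta_\Omega^2(w))$. Because $\max_i|u'_i|/\tau_i(w,\delta_\Omega(w))\gtrsim1$, that $\mathrm{O}(\delta_\Omega^2(w))$ shift produces an $\mathrm{O}(\delta_\Omega(w))$ error that is absorbed, so property (2) of \lemref{basic-properties-geometry} applied to the genuine extremal basis $(e_i)_i$ gives $1/\tau(w,u,\delta_\Omega(w))\simeq\max_i|u'_i|/\tau(w,e'_i,\delta_\Omega(w))$ and likewise for $v$. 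Running the same chain of inequalities as in Lemma 1 one obtains $|u'_i||v'_j|/(k(\xi,u)k(\xi,v))\lesssim1/(k(\xi,e'_i)k(\xi,e'_j))$ on $P(w,\delta_\Omega(w))\cap U$, and substituting $\psi(u,v)=\sum_{i,j}u'_i\overline{v'_j}\,\psi(e'_i,e'_j)$ (resp. the corresponding bound for $|\psi(u,v)|$) into (\ref{eq:lemma1-comparison-smooth-forms}) (resp. (\ref{eq:lemma1-comparison-non-smooth-forms})) finishes the proof.

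The only real obstacle is the first two paragraphs, i.e. checking that $\tau(\cdot,e'_i,\delta_\Omega(\cdot))$ and $k(\cdot,e'_i)$ are unchanged up to constants under an $\mathrm{O}(\delta_\Omega^2(w))$ perturbation: this is precisely where one must use that the perturbation is of \emph{second} order in $\delta_\Omega(w)$, since after dividing by the smallest relevant scale $\tau_i(w,\delta_\Omega(w))\gtrsim\delta_\Omega(w)$ it leaves an error of size $\mathrm{O}(\delta_\Omega(w))$, absorbed by shrinking $U$. After that, everything is a word-for-word repetition of the proof of Lemma 1.
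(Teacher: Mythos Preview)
Your proof is correct, but it takes a longer route than the paper's. Both approaches share the crucial observation that an $\mathrm{O}(\delta_\Omega^2(w))$ perturbation is negligible against the smallest geometric scale $\tau_j\gtrsim\delta_\Omega(w)$, hence $k(\xi,e'_i)\simeq k(\xi,e_i)$ on $P(w,\delta_\Omega(w))$. Where you diverge is in how you exploit this: you re-run the entire proof of Lemma~1 with the perturbed basis, checking that $(e'_i)_i$ behaves like an extremal basis for each inequality in that chain (decomposition of $u,v$, the $\max$--$\min$ estimate from property~(\ref{geometry-3}) of \lemref{basic-properties-geometry}, etc.). The paper instead uses Lemma~1 as a black box and perturbs only its \emph{output}: once $k(\xi,e'_i)\simeq k(\xi,e_i)$ is known, it writes
\[
\langle\psi;e'_i,e'_j\rangle=\langle\psi;e_i,e_j\rangle+\sum_{s,t}\mathrm{O}\bigl(\delta_\Omega^2(w)\bigr)\langle\psi;e_s,e_t\rangle,
\]
so the right-hand sides of (\ref{eq:lemma1-comparison-smooth-forms}) and (\ref{eq:lemma1-comparison-non-smooth-forms}) for $(e_i)_i$ and for $(e'_i)_i$ differ by a term of size $\delta_\Omega^2(w)\sum_{i,j}k_i^{-1}k_j^{-1}\cdot\sum_{s,t}|\psi(e_s,e_t)|$, which is absorbed into the left-hand side for $\beta$ small. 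This is a two-line argument. Your approach buys nothing extra here, though it does make transparent that the perturbed family can be used \emph{throughout} the computation rather than only at the end; the paper's approach is shorter precisely because it avoids redoing the decomposition of $u$ and $v$.
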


\begin{proof}
By (2) of \lemref{basic-properties-geometry}, $k\left(\xi,e'_{i}\right)\simeq k\left(\xi,e_{i}\right)$,
and 
\[
\left\langle \psi;e'_{i},e'_{j}\right\rangle =\left\langle \psi;e_{i},e_{j}\right\rangle +\sum_{s,t}\mbox{O}\left(\delta_{\Omega}^{2}(w)\right)\left\langle \psi;e_{s},e_{t}\right\rangle 
\]
which proves the result for $\beta$ small enough.
\end{proof}
We now finish the proof of the \propref{comparison-carleson-norms-smooth-forms},
proving both parts at the same time. Let $P_{i}=P\left(Z_{i},\frac{\delta_{\Omega}\left(Z_{i}\right)}{K}\right)$,
$i\in\mathbb{N}$, be a minimal covering of $U\cap\Omega$. For each
$i$ fixed and $N_{i}$ to be precised later, let
\[
A_{i}^{j}=\left\{ z\in U\cap\Omega\mbox{ such that }d_{e}\left(z,P_{i}\right)<\frac{\delta_{\Omega}\left(Z_{i}\right)}{K}\frac{j}{N_{i}}\right\} ,\,j=1,\ldots,N_{i},
\]
and $A_{i}^{0}=P_{i}$. We assume that $K$ is chosen so that, for
all $j$, $A_{i}^{j}\subset P\left(Z_{i},\delta_{\Omega}\left(Z_{i}\right)\right)$.
Let $B_{i}^{j}=A_{i}^{j}\setminus A_{i}^{j-1}$.

Let $I_{k}=\left\{ i\in\mathbb{N}\mbox{ such that }\delta_{\Omega}\left(Z_{i}\right)\in\left[2^{-k},2^{-k+1}\right[\right\} $
and $M_{k}=\#I_{k}$ the cardinal of $I_{k}$. For each $i\in I_{k}$
let us choose $N_{i}=N(k)$ sufficiently large so that there exists
$s(i)\geq1$ such that
\[
\left|\psi\right|\left(B_{i}^{s(i)}\right)\leq\frac{1}{2^{k+4}}\frac{1}{M_{k}}\left\Vert \psi\right\Vert _{W^{1}(\Omega)}.
\]

Let $C_{i}=A_{i}^{s(i)-1}$. Note that $\left(C_{i}\right)_{i}$ is
an open covering of $U\cap\Omega$. Let $\Delta=\bigcup_{i}B_{i}^{s(i)}$
and let $D_{j}$ the connected components of $\left(U\cap\Omega\right)\setminus\Delta$.

Let $J(j)=\left\{ t\mbox{ such that }D_{j}\subset C_{t}\right\} $.
Let $w_{j}$ one of the points $Z_{t}$ such that $t\in I(j)$. Let
$\left(\psi_{j}\right)_{j}$ be a family of smooth functions such
that $0\leq\psi_{j}\leq1$, $\psi_{j}\equiv1$ on $D_{j}$, $\mbox{Supp}\left(\psi_{j}\right)\subset D_{j}\cup\left\{ A_{t}^{s(t)}\mbox{ such that }t\in J(j)\right\} $,
and $\sum_{j}\psi_{j}\equiv1$ on $U\cap\Omega$.

For each $j$ let $\left(e_{l}^{j}\right)_{l}$ be a $\delta_{\Omega}\left(w_{j}\right)$-extremal
basis at $w_{j}$. Note that we can chose $e_{l}^{j}$ so that the
component of $e_{l}^{j}$ on the first vector of the canonical basis
$\left(f_{k}\right)_{k}$ of $\mathbb{C}^{n}$ is non negative. If
we denote $v_{l}^{j}=e_{l}^{j}+\delta_{\Omega}^{2}\left(w_{j}\right)f_{1}$
and $u_{l}=\sum_{j}\psi_{j}v_{l}^{j}$, the vector fields $u_{l}$,
$1\leq l\leq n$, are smooth, don't vanish on $U$, and the proposition
follows the lemmas.
\end{proof}

\section{Proof of Theorem \ref{thm:d_resolution}\label{sec:Proof-of-theorem-2.1}}

The main point in the proof is the following local version of the
theorem:
\begin{spprop}
\label{prop:Local-solution-d-equation}For each point $p\in\overline{\Omega}$
there exist two neighborhoods $W$ and $V$ of $p$ in $\overline{\Omega}$,
$W\Subset V$ (in $\overline{\Omega}$) such that:
\begin{enumerate}
\item If $\vartheta$is a closed current of order $0$ and degree $2$ supported
in $V\cap\Omega$ such that $\vartheta$ is a Carleson current in
$\Omega$, there exists $w$ a solution of the equation $dw=\vartheta$
in $W$ such that $w$ is a Carleson current in $\Omega$.
\item If $\omega$ is a closed current of order $0$ and degree $1$ supported
in $V\cap\Omega$ such that $\omega$ is a Carleson current in $\Omega$,
there exists $f$ a solution of the equation $df=\omega$ in $W$
such that $\delta_{\Omega}^{\nicefrac{1}{m}-1}f$ is a Carleson measure
in $\Omega$.
\end{enumerate}
\end{spprop}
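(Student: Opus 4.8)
The plan is to follow the scheme of \cite{AnCa-Hp-90} (see also \cite{Sko76,VarHp80}): reduce to a de Rham homotopy formula on a small piece of $\Omega$ near $p$, and estimate the resulting kernel with the anisotropic geometry of \secref{Geometry-of-lineally} and the comparison results of \secref{Carleson-currents}. If $p\in\Omega$, take $W\Subset V\Subset\Omega$: a Carleson current supported in $V\cap\Omega$ has coefficients that are bounded measures, so the classical constant coefficient Poincar\'e homotopy gives a solution with bounded coefficients, which is a Carleson current, and for which, $\delta_{\Omega}$ being bounded below on $\overline{W}$, the weight $\delta_{\Omega}^{\nicefrac{1}{m}-1}$ is harmless. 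So assume $p\in\partial\Omega$; fix a small neighbourhood $V$ of $p$, shrink it so that $\partial\Omega$ can be straightened inside $V$ (hence $V\cap\Omega$ becomes $\mathcal{C}^{\infty}$-diffeomorphic to a convex body), and fix $W\Subset V$.

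Given the current $\vartheta$ (resp. $\omega$), regularise: by \propref{regularization-local} the smoothings $\vartheta_{\varepsilon}=\vartheta*\varphi_{\varepsilon}$ are closed on the subdomain where they are defined, which exhausts $V\cap\Omega$ as $\varepsilon\to0$, and $\left\Vert\vartheta_{\varepsilon}\right\Vert_{W^{1}(\Omega)}\lesssim\left\Vert\vartheta\right\Vert_{W^{1}(\Omega)}$ uniformly in $\varepsilon$; moreover, by (2) of \propref{comparison-carleson-norms-smooth-forms}, for \emph{smooth} forms $\psi$ the $W^{1}$-norm is comparable to the punctual norm $\left\Vert\delta_{\Omega}\left\Vert\psi\right\Vert_{\mathbbmss k}d\lambda\right\Vert_{W^{1}(\Omega)}$ in degree $2$, and to $\left\Vert\left\Vert\psi\right\Vert_{\mathbbmss k}d\lambda\right\Vert_{W^{1}(\Omega)}$ in degree $1$. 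Thus it suffices to produce, for every smooth closed form $\psi$ of the appropriate degree on $V\cap\Omega$, a smooth solution on $W\cap\Omega$ whose punctual norm is bounded by that of $\psi$, with constant independent of $\psi$; applying this to $\psi=\vartheta_{\varepsilon}$ and letting $\varepsilon\to0$, the uniform bounds give weak-$*$ precompactness of the coefficients (bounded measures on $W\cap\Omega$), and any weak limit solves the equation in $W\cap\Omega$ with the announced estimate.

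Since $V\cap\Omega$ is diffeomorphic to a convex body, contraction to a fixed interior point $z_{0}$ along the images of the straight segments ending at $z_{0}$ provides a linear homotopy operator $K$, preserving smoothness, with $dK+Kd=\mathrm{id}$ on forms of positive degree; explicitly, $K\psi(z)$ is the integral of $\psi$ along a curve $\gamma_{z}$ that joins $z_{0}$ to $z$ inside $V\cap\Omega$, depends smoothly on $z$, and along which $\delta_{\Omega}$ varies monotonically between $\delta_{\Omega}(z_{0})$ and $\delta_{\Omega}(z)$. For closed $\psi$ the form $K\psi$ already solves the equation: it has degree $1$ when $\psi$ has degree $2$, and it is a function when $\psi$ has degree $1$. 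The entire content is therefore the estimate of $K\psi$.

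That estimate is the main obstacle, and the point where the calculus of \cite{BrGrHp99} has to be reworked, precisely because the contraction does not respect the polydiscs $P_{\varepsilon}$. Fix $Z\in\partial\Omega$ and a scale $\varepsilon$. To bound $\left\Vert\frac{\delta_{\Omega}\left|K\psi(u,v)\right|}{k(\cdot,u)\,k(\cdot,v)}\right\Vert$ over $P_{\varepsilon}(Z)\cap\Omega$ (resp. $\left\Vert\delta_{\Omega}^{\nicefrac{1}{m}-1}\left|K\psi\right|\right\Vert$ over $P_{\varepsilon}(Z)\cap\Omega$), write $K\psi(z)$ as that curve integral, split the parameter according to the depth $\delta_{\Omega}(\gamma_{z}(t))$ into the portion $\lesssim\varepsilon$ --- where, by (\ref{eq:polydiscs-pseudodistance}), $\gamma_{z}(t)$ stays in $C\,P_{\varepsilon}(Z)$ --- and the dyadic pieces at depths $\simeq2^{k}\varepsilon$, $k\geq0$; then apply Fubini, change variables $z\mapsto\gamma_{z}(t)$ with Jacobian $\simeq1$, and on each piece evaluate $\psi$ in a $\delta_{\Omega}$-extremal frame at the running depth, comparing the weights $k(\cdot,\cdot)$ and the radii $\tau_{i}$ at $z$, at $\gamma_{z}(t)$ and at $Z$ by means of (3)--(5) of \lemref{basic-properties-geometry} and of \lemref{3.4-maj-deriv-rho-equiv-tho-i-z-zeta}. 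The summation over $k$ converges exactly because of (\ref{eq:comp-tau-epsilon-tau-lambda-epsilon}): passing from depth $\varepsilon$ to depth $\lambda\varepsilon$ enlarges the polydisc radii by a factor at most $\lambda$, while the $\tau_{i}$ entering the $k$-weights grow at least like $\lambda^{\nicefrac{1}{m}}$; measured against $\sigma(P_{\varepsilon}(Z)\cap\partial\Omega)$ this yields, in degree $1$, precisely the admissible loss $\delta_{\Omega}^{\nicefrac{1}{m}-1}$, and, in degree $2$, a clean Carleson bound. The global terms $(\delta_{\Omega}\left|K\psi\right|)(\Omega)$ and $(\delta_{\Omega}^{\nicefrac{1}{m}-1}\left|K\psi\right|)(\Omega)$ are handled the same way, integrating over $\Omega$ instead of over one tent. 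Everything else --- the reductions above and the bookkeeping of the straightening diffeomorphism --- is routine; I expect this dyadic Fubini estimate, with the correct tracking of the $\tau_{i}$'s, to be the only genuine difficulty.
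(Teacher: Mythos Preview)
Your reduction to the boundary case, the regularisation, and the weak-limit argument are all fine and match the paper. The genuine gap is the homotopy itself: a \emph{single} contraction curve $\gamma_z$ cannot produce the Carleson estimate, and the dyadic Fubini you sketch does not close.

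Here is the obstruction. After Fubini and the change of variable $w=\gamma_z(t)$, the set $\{\gamma_z(t):z\in P_\varepsilon(Z)\cap\Omega\}$ at a fixed depth $\delta_\Omega(w)\simeq2^k\varepsilon$ is an \emph{isotropically} translated copy of the original tent: tangential extent $\simeq\tau_i(Z,\varepsilon)$, normal extent $\simeq\varepsilon$, sitting at depth $2^k\varepsilon$. This thin slab is not a tent; the smallest tent containing it is $P_{K2^k\varepsilon}(Z)$, whose surface measure is $\prod_{i\ge2}\tau_i(Z,2^k\varepsilon)^2$. Applying the Carleson hypothesis on that larger tent and carrying the weight $k(w,v)/k(z,v)\simeq(2^k)^{1-1/m}$ (for a tangential $v$ of maximal type) gives a contribution that grows like a positive power of $2^k$, so the dyadic sum diverges. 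No refinement of the covering helps, because the Carleson norm gives you nothing finer than control on tents.

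The paper's remedy is exactly to replace the single curve by a \emph{family} $h_\Lambda(t,z)$, $\Lambda\in\mathbb{D}^n$, built from local extremal bases so that for each $(t,z)$ the set $\{h_\Lambda(t,z):\Lambda\in\mathbb{D}^n\}$ is (up to constants) the anisotropic pseudo-ball $Q(t,z)$. One then \emph{averages} the pulled-back form over $\Lambda$; this converts the point evaluation $\vartheta(\gamma_z(t))$ into a mean $\oint_{Q(t,z)}\vartheta$, and since $Q(t,z)$ is itself comparable to a tent, the Carleson hypothesis applies with no loss. The two lemmas in the paper make this precise: first a pointwise bound $\left\Vert H_\vartheta(z)\right\Vert_{\mathbbmss k}\lesssim T(\delta_\Omega\left\Vert\vartheta\right\Vert_{\mathbbmss k})(z)$ with $T$ an averaging operator over $Q(t,z)$, then the Carleson bound for $T$. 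The paper also needs a cutoff near the contraction centre $A(p)$ (so that $t\ge t_0>0$ and the Jacobian of $\Lambda\mapsto h_\Lambda(t,z)$ is comparable to $\mathrm{Vol}\,Q(t,z)$), which you omit; the resulting error term $H(d\psi\wedge T)$ is smooth and is handled by a standard Poincar\'e homotopy. In short: your outline is right up to the choice of homotopy, but the anisotropic averaging is not a technicality one can bypass---it is the mechanism that matches the homotopy to the geometry of the polydiscs.
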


\medskip{}

For convenience of the reader, let us briefly indicate how \thmref{d_resolution}
is a simple consequence of the Proposition (this follows \cite{Sko76},
\cite{VarHp80}, and \cite{AnCa-Hp-90}).

We consider the following three sheaves $\mathcal{F}_{0}$, $\mathcal{F}_{1}$
and $\mathcal{F}_{2}$:

Let $U$ be an open set in $\overline{\Omega}$. $\vartheta\in\Gamma\left(U,\mathcal{F}_{2}\right)$
if $\vartheta$ is a closed $2$-current supported in $U\cap\Omega$
and $\chi\vartheta$ is a $2$-Carleson current supported in $U\cap\Omega$
for all $\chi\in\mathcal{C}_{0}^{\infty}(U)$; $w\in\Gamma\left(U,\mathcal{F}_{1}\right)$
if $w$ is a $1$-current supported in $U\cap\Omega$, $dw\in\Gamma\left(U,\mathcal{F}_{2}\right)$
and $\chi w$ is a $1$-Carleson current supported in $U\cap\Omega$
for all $\chi\in\mathcal{C}_{0}^{\infty}(U)$; $f\in\Gamma\left(U,\mathcal{F}_{0}\right)$
if $f$ is a measure in $U\cap\Omega$, $df\in\Gamma\left(U,\mathcal{F}_{1}\right)$
and $\chi\delta_{\Omega}^{\nicefrac{1}{m}-1}f$ is a Carleson measure
supported in $U\cap\Omega$ for all $\chi\in\mathcal{C}_{0}^{\infty}(U)$.
Then $\mathcal{F}_{0}$ and $\mathcal{F}_{1}$ are fine sheaves and,
by \propref{Local-solution-d-equation}, the sequence
\[
0\rightarrow\mathbb{C}\stackrel{i}{\rightarrow}\mathcal{F}_{0}\stackrel{d}{\rightarrow}\mathcal{F}_{1}\stackrel{d}{\rightarrow}\mathcal{F}_{2}\rightarrow0
\]
is exact, and, by a standard cohomology argument,
\[
\Gamma\left(\overline{\Omega},\mathcal{F}_{2}\right)/d\Gamma\left(\overline{\Omega},\mathcal{F}_{1}\right)\simeq H^{2}\left(\overline{\Omega},\mathbb{C}\right)\simeq H^{2}\left(\Omega,\mathbb{C}\right)
\]
and \thmref{d_resolution} is proved.

\medskip{}

For $p\in\Omega$, choosing $V$ and $W$ to be euclidean balls relatively
compact in $\Omega$, the proposition only means that if $\vartheta$
(resp $\omega$) is a current whose coefficients are bounded measures
in $V$ there exists $w$ (resp. $f$) solution of $dw=\vartheta$
(resp. $df=\omega$) in $W$ whose coefficients are bounded measures
in $W$. As this is standard, we don't give any details here.

We now prove \propref{Local-solution-d-equation} for a fixed point
$p\in\partial\Omega$.

There exists a strictly positive real number $\delta_{1}$ such that,
for $z\in\Omega$ satisfying $\delta_{\Omega}(z)\leq\delta_{1}$ the
polydisk $P(z,\varepsilon)$ are well defined. Then we choose the
neighborhoods $V$ and $W$ of $p$ as follows: let $r_{1}$, $r_{2}$
and $\eta_{1}$ three positive real numbers, $\delta_{1}>r_{1}>4r_{2}>8\eta_{1}$,
such that (denoting by $B_{e}$ an euclidean ball):
\begin{itemize}
\item $B_{e}\left(p,r_{1}\right)\cap\Omega\subset\left\{ z\in\Omega\mbox{ such that }\delta_{\Omega}(z)<\delta_{1}\right\} $;
\item there exists a point $A(p)\in B_{e}\left(p,r_{1}\right)\cap\left\{ \xi\mbox{ such that }2r_{2}<\delta_{\Omega}(\xi)<\nicefrac{r_{1}}{2}\right\} \cap\Omega$;
\item for $\zeta\in B_{e}\left(p,r_{2}\right)$, $P\left(\zeta,\eta_{1}\right)\cap\Omega\subset B_{e}\left(p,2r_{2}\right)$;
\item for $\zeta\in B_{e}\left(p,2r_{2}\right)$ and all $\xi\in B_{e}\left(p,2r_{2}\right)$,
if $\nu(\xi)=\nabla\rho(\xi)$ is the normal at $\xi$, $\left|\left\langle \nu(\xi),\vv{A(p)\zeta}\right\rangle \right|\geq\frac{1}{2}\left\Vert \vv{A(p)\zeta}\right\Vert \left\Vert \nu(\xi)\right\Vert $.
\end{itemize}

Then we define $V=B_{e}\left(p,r_{1}\right)\cap\Omega$ and $W=B_{e}\left(p,r_{2}\right)\cap\left\{ \zeta\in\Omega\mbox{ such that }\delta_{\Omega}(\zeta)<\eta_{1}\right\} $.

First, we regularize the currents using \propref{regularization-local}
so that the regularized currents are smooth and closed in $V_{\varepsilon}=\left\{ \zeta\in V\mbox{ such that }\delta_{\partial V}(\zeta)>\varepsilon\right\} $.

Thus, to finish the proof of \propref{Local-solution-d-equation}
we assume the currents $\vartheta$ and $\omega$ supported and smooth
in $\overline{V_{\varepsilon}}$ and we will solve the equation $dw=\vartheta$
and $df=\omega$ in $W_{\varepsilon}=B_{e}\left(p,r_{2}-\varepsilon\right)\cap\left\{ \zeta\in\Omega\mbox{ such that }\delta_{\Omega}(\zeta)<\eta_{1}\right\} $
(so that $\cup_{\varepsilon}W_{\varepsilon}=W$), using \propref{comparison-carleson-norms-smooth-forms},
and the \propref{Local-solution-d-equation} will follow a standard
weak limiting procedure.
\begin{proof}[Proof of \propref{Local-solution-d-equation} for smooth currents
in $V_{\varepsilon}$]
 By translation we assume $A(p)=0$.

Let $\left(P_{j}\right)_{j}$ be a minimal covering of $V_{\varepsilon}$
by polydisks centered on $Z_{j}$, $P_{j}=P\left(Z_{j},\delta_{\Omega}\left(Z_{j}\right)\right)$.
Let $\left(\Phi_{j}\right)_{j}$ be a smoth partition of $1$ associated
to the $P_{j}$ i.e. $\Phi_{j}\geq0$, $\sum\Phi_{j}=1$, $\Phi_{j}$
identically zero outside $2P_{j}$, chosen so that $\left|\frac{\partial\Phi_{j}}{\partial v}\right|\lesssim\tau\left(\cdot,v,\delta_{\Omega}\left(Z_{j}\right)\right)^{-1}$.

Let $\left(\psi_{k}\right)_{k\geq0}$ be a family of functions in
$\mathcal{C}^{\infty}(\mathbb{R})$ with support in $\left]2^{-k-1},2^{-k+1}\right]$
and such that $\psi_{k}\geq0$, $\sum\psi_{k}=1$ on $\left]0,1\right]$
and $\psi'_{k}(t)\lesssim2^{k}$.

Finally let $\varphi\in\mathcal{C}^{\infty}(\mathbb{R})$, $0\leq\varphi\leq1$
such that $\varphi(x)=1$ if $x<\nicefrac{1}{2}$ and $\varphi(x)=0$
if $x>1$.

Let us denote $\mathbb{D}$ the unit disk of $\mathbb{C}$. For $\Lambda=\left(\lambda_{1},\ldots,\lambda_{n}\right)\in\mathbb{D}^{n}$
and for $c$ sufficiently small, to be precised, we consider the following
function
\begin{equation}
h_{\Lambda}(t,z)=tz+ct\sum_{k,j}\psi_{k}(1-t)\Phi_{j}(tz)\sum_{i=1}^{n}A_{i,j,k}(z)\label{eq:definition-Hlambda}
\end{equation}
with
\begin{multline*}
A_{i,j,k}(z)=\varphi\left(\frac{2^{-k}}{-\rho(z)}\right)\frac{2^{-k}}{\delta_{\Omega}\left(Z_{j}\right)}\lambda_{i}\tau_{i}\left(Z_{j},\delta_{\Omega}\left(Z_{j}\right)\right)e_{i}\left(Z_{j},\delta_{\Omega}\left(Z_{j}\right)\right)+\\
\left(1-\varphi\left(\frac{2^{-k}}{-\rho(z)}\right)\right)\lambda_{i}\tau_{i}\left(Z_{j},2^{-k}\right)e_{i}\left(Z_{j},2^{-k}\right)
\end{multline*}
where $\left(e_{i}\left(Z_{j},\delta_{\Omega}\left(Z_{j}\right)\right)\right)_{i}$
is the $\left(Z_{j},\delta_{\Omega}\left(Z_{j}\right)\right)$-extremal
basis used for the polydisk $P_{j}$ and $\left(e_{i}\left(Z_{j},2^{-k}\right)\right)_{i}$
a $\left(Z_{j},2^{-k}\right)$-extremal basis.

This function satisfies the following properties:
\begin{stlem}
\label{lem:properties-h-lambda}\mynobreakpar
\begin{enumerate}
\item $h_{\lambda}(0,z)=0$, $h_{\lambda}(1,z)=z$ and $d_{z}h_{\lambda}(1,z)=dz$;
\item If $\Phi_{j}(tz)\neq0$, then $\delta_{\Omega}(tz)\simeq\delta_{\Omega}\left(Z_{j}\right)$
and $tz\in P_{j}\left(Z_{j},K\delta_{\Omega}\left(Z_{j}\right)\right)$;
\item If $\Phi_{j}(tz)\neq0$, if $c_{0}$ ((\ref{eq:def-pseudo-balls}))
and $c$ ((\ref{eq:definition-Hlambda})) are small enough,

\begin{enumerate}
\item if $\varphi\left(\frac{2^{-k}}{-\rho(z)}\right)\neq0$, then $\delta_{\Omega}(h_{\Lambda}(t,z))\simeq\delta_{\Omega}(tz)\simeq\delta_{\Omega}(z)\simeq\delta_{\Omega}\left(Z_{j}\right)$
and $h_{\Lambda}(t,z)\in P\left(Z_{j},K\delta_{\Omega}\left(Z_{j}\right)\right)$,
\item If $\varphi\left(\frac{2^{-k}}{-\rho(z)}\right)=0$, then $\delta_{\Omega}(h_{\Lambda}(t,z))\simeq2^{-k}$
and $h_{\Lambda}(t,z)\in P\left(Z_{j},K2^{-k}\right)$;
\end{enumerate}
\item Denoting
\[
Q(t,z)=\left\{ \begin{array}{cc}
\mathcal{B}(tz,1-t) & \mbox{ if }1-t\geq-\rho(z)\\
\frac{1-t}{-\rho(z)}\mathcal{B}(tz,-\rho(z)) & \mbox{ if }1-t\leq-\rho(z)
\end{array}\right.
\]
and $Q_{1}(t,z)=\left\{ w=h_{\Lambda}(t,z)\mbox{ for }\Lambda\in\mathbb{D}^{n}\right\} $.

\begin{enumerate}
\item Then $c$ in (\ref{eq:definition-Hlambda}) being small enough, for
$z\in W$ and $\forall t\in\left[0,1\right]$, $Q_{1}(t,z)\subset Q(t,z)$,
, $Q_{1}(t,z)\subset B_{e}\left(p,r_{1}\right)\cap\Omega$,
\item $\forall t_{0}>0$, there exists $c_{1}=c_{1}\left(\Omega,c,t_{0}\right)>0$
so that $c_{1}Q(t,z)\subset Q_{1}(t,z)$, $\forall t\geq t_{0}$.
\end{enumerate}
\end{enumerate}
\end{stlem}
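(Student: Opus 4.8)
The plan is to check the four items in turn: (1) is a formal identity, (2) is the standard comparison inside a polydisc, and the real content is concentrated in (3) together with the geometric bookkeeping behind (4). \emph{Item (1):} since each $\psi_{k}$, $k\geq0$, is supported in $]2^{-k-1},2^{-k+1}]$ we have $\psi_{k}(0)=0$, so putting $t=0$ or $t=1$ in (\ref{eq:definition-Hlambda}) kills the whole correction sum and gives $h_{\Lambda}(0,z)=0$ and $h_{\Lambda}(1,z)=z$, the last equality of (1) being then just the $z$-differential of $z\mapsto z$. \emph{Item (2):} if $\Phi_{j}(tz)\neq0$ then $tz\in2P_{j}=2P(Z_{j},\delta_{\Omega}(Z_{j}))$, so $tz\in P(Z_{j},K\delta_{\Omega}(Z_{j}))$ by (\ref{eq:polydiscs-pseudodistance}), and $\delta_{\Omega}(tz)\simeq\delta_{\Omega}(Z_{j})$ follows from bounding $|\rho(tz)-\rho(Z_{j})|$ by a small multiple of $\delta_{\Omega}(Z_{j})$ through the Taylor estimate \lemref{basic-properties-geometry}(1) (using $\tau_{1}=\delta_{\Omega}$ and $c_{0}$ small), exactly as in \lemref{3.4-maj-deriv-rho-equiv-tho-i-z-zeta}.

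\emph{Item (3)} is the heart. First I would note that, by its very definition, the correction $h_{\Lambda}(t,z)-tz$ is a sum over the finitely many active indices $(k,j)$ of vectors directed along the extremal frame $e_{i}(Z_{j},\sigma)$, with coefficients bounded by $c\,\tau_{i}(Z_{j},\sigma)$, where $\sigma=\delta_{\Omega}(Z_{j})$ on $\{\varphi(2^{-k}/(-\rho(z)))\neq0\}$ (case (a); here one absorbs the harmless factor $2^{-k}/\delta_{\Omega}(Z_{j})\leq1$, which is legitimate because $\varphi\neq0$ forces $2^{-k}<-\rho(z)\simeq\delta_{\Omega}(z)$ and because $\delta_{\Omega}(z)\simeq\delta_{\Omega}(Z_{j})$ by (2)) and $\sigma=2^{-k}$ on $\{\varphi(2^{-k}/(-\rho(z)))=0\}$ (case (b)). Hence, for $c$ small, $h_{\Lambda}(t,z)$ stays in a fixed small multiple of $P(Z_{j},\sigma)$ about $tz$, and then --- invoking \lemref{3.4-maj-deriv-rho-equiv-tho-i-z-zeta} and the Taylor estimate once more to see that $\rho$ varies by at most a small multiple of $\sigma$ on $P(Z_{j},K\sigma)$ --- it only remains to identify $\sigma$ and to prove $\delta_{\Omega}(tz)\simeq\sigma$. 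Now $\psi_{k}(1-t)\neq0$ forces $1-t\simeq2^{-k}$; combining this with $\Phi_{j}(tz)\neq0$ (whence $\delta_{\Omega}(tz)\simeq\delta_{\Omega}(Z_{j})$ by (2)) and with the transversality of the segment $[A(p),z]$ to the level sets of $\rho$ --- precisely the last condition imposed in the choice of $V$ and $W$, which, since $A(p)=0$ and $\|z\|$ is bounded away from $0$, yields $\delta_{\Omega}(tz)\simeq\max(\delta_{\Omega}(z),1-t)$ --- one gets in case (a) that $\delta_{\Omega}(tz)\simeq-\rho(z)\simeq\delta_{\Omega}(Z_{j})$, hence $\sigma\simeq\delta_{\Omega}(z)$ and all the comparisons of (3a), and in case (b) that $\delta_{\Omega}(tz)\simeq1-t\simeq2^{-k}\simeq\delta_{\Omega}(Z_{j})$, which is (3b); the polydisc memberships follow from $tz\in P(Z_{j},K\delta_{\Omega}(Z_{j}))$, (\ref{eq:polydiscs-pseudodistance}) and the size of the correction.

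\emph{Item (4):} fix $(t,z)$ with $z\in W$. The map $\Lambda\mapsto h_{\Lambda}(t,z)$ is affine, equal to $tz+ct\,M_{t,z}(\Lambda)$ with $M_{t,z}$ linear and, by (3), built from scales all comparable to $\sigma_{0}:=\max(\delta_{\Omega}(z),1-t)$ and from extremal data centered (by \lemref{3.4-maj-deriv-rho-equiv-tho-i-z-zeta}) at points comparable to $tz$; hence $Q_{1}(t,z)=tz+ct\,M_{t,z}(\mathbb{D}^{n})$ is, up to fixed constants, the polydisc $tz+ct\,\lambda(t,z)\,P(tz,\sigma_{0})$, where $\lambda(t,z)=1$ in case (b) and $\lambda(t,z)\simeq(1-t)/(-\rho(z))$ in case (a). Comparing $P$ and $\mathcal{B}$ by (\ref{eq:equivalence-pseudoballs-polydisk}) one checks, in both regimes of the definition of $Q$, that $\lambda(t,z)\,\mathcal{B}(tz,\sigma_{0})\simeq Q(t,z)$, so that $Q_{1}(t,z)\simeq ct\,Q(t,z)$. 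Taking $c$ small, uniformly in $t\in[0,1]$, gives the forward inclusion $Q_{1}(t,z)\subset Q(t,z)$; and since $tz$ lies on the segment $[A(p),z]$, which is contained in the convex ball $B_{e}(p,r_{1})$ and at distance $\simeq\sigma_{0}$ from $\partial\Omega$, the small set $Q_{1}(t,z)$ also stays inside $B_{e}(p,r_{1})\cap\Omega$. For the reverse inclusion, $M_{t,z}$ is a linear isomorphism onto the polydisc at scale $\sigma_{0}$ (the $e_{i}(Z_{j},\sigma_{0})$ form a basis of $\mathbb{C}^{n}$ and the positivity of the cutoffs $\psi_{k},\Phi_{j},\varphi$ prevents any cancellation among the comparable terms), so $M_{t,z}(\mathbb{D}^{n})$ contains a fixed multiple of $\lambda(t,z)P(tz,\sigma_{0})$; for $t\geq t_{0}$ the prefactor $ct\geq ct_{0}$ is bounded below, which yields $c_{1}Q(t,z)\subset Q_{1}(t,z)$ with $c_{1}=c_{1}(\Omega,c,t_{0})$ and makes clear why $t\geq t_{0}$ cannot be dropped (at $t=0$, $Q_{1}$ degenerates to $\{0\}$).

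The hard part will be the identification of the scale in (3): one must handle the two summands of $A_{i,j,k}$ and the two regimes of the cutoff $\varphi$ simultaneously and, above all, squeeze out of the transversality condition on $A(p)$ the comparison $\delta_{\Omega}(tz)\simeq\max(\delta_{\Omega}(z),1-t)$ along $[A(p),z]$; once that scale is fixed, everything else reduces to the Taylor estimate \lemref{basic-properties-geometry}(1) and the pseudo-distance comparisons (\ref{eq:polydiscs-pseudodistance}) and (\ref{eq:equivalence-pseudoballs-polydisk}). A lesser point, in (4), is checking that summing over several active $(k,j)$ does not spoil the surjectivity of the $\Lambda$-map.
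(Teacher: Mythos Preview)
The paper states this lemma without proof, so there is no argument to compare against; your sketch supplies precisely the verification the authors leave to the reader, and items (1)--(3) and (4a) are handled correctly. In particular, extracting from the transversality condition on $A(p)$ the comparison $\delta_{\Omega}(tz)\simeq\max\bigl(\delta_{\Omega}(z),1-t\bigr)$ along the segment $[A(p),z]$ is exactly the right mechanism for identifying the scale $\sigma$ in (3), and your bookkeeping of the two regimes of $\varphi$ (noting that when $\varphi\neq0$ but $\varphi\neq1$ the two scales $2^{-k}$ and $\delta_{\Omega}(Z_j)$ coincide anyway) is accurate.

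The one place where your argument is thinner than it looks is the reverse inclusion (4b), which you yourself flag at the end. Positivity of the scalar weights $\psi_k\Phi_j$ does \emph{not} by itself prevent cancellation among the vectors $e_i(Z_j,\sigma)$ for the several active $j$'s: extremal bases are only determined up to a phase, and without a coherent choice one could in principle have $e_2(Z_{j_1},\sigma)=-e_2(Z_{j_2},\sigma)$, forcing the corresponding column $w_2$ of $M_{t,z}$ to vanish. What makes the construction work is that the $e_i(Z_j,\cdot)$ are fixed once and for all (one choice per center $Z_j$), and one should pick them coherently --- for instance with a prescribed component positive, exactly as the paper does later in the proof of \propref{comparison-carleson-norms-smooth-forms}. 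With such a choice, and using \lemref{3.4-maj-deriv-rho-equiv-tho-i-z-zeta} to control $\tau(tz,\cdot,\sigma_0)$ of the convex combination, the isomorphism claim goes through. This is a fixable technicality rather than a structural gap, but ``positivity prevents cancellation'' is not, as written, a proof.
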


The homotopy operator $H$ for $d$ is then defined on smooth forms
$\vartheta$ taking the average over $\mathbb{D}^{n}$ of the integrals
of the $dt$-component of $h_{\Lambda}^{*}\vartheta$: for example,
if $\vartheta$ is a smooth form of degree $2$, $H(\vartheta)=H_{\vartheta}$
is the form of degree $1$ 
\[
H_{\vartheta}(z)=\oint H_{\Lambda}(\vartheta)(z)d\Lambda,
\]
where $H_{\Lambda}(\vartheta)(z)$ is defined on every vector $v$
by
\[
H_{\Lambda}(\vartheta)(z)(v)=\int_{0}^{1}h_{\Lambda}^{*}\vartheta dt=\int_{0}^{1}\vartheta\left(h_{\Lambda}(t,z)\right)\left(Y_{t},Z_{t,v}\right)dt,
\]
where $Y_{t}=\frac{\partial}{\partial t}h_{\Lambda}(t,z)$ and $Z_{t,v}=\frac{\partial}{\partial v}h_{\Lambda}(t,z)$.
Then $H(\vartheta)$ is smooth and $dH+Hd=Id$.

To get the required Carleson estimate for $H(\vartheta)$, $\vartheta$
and $\omega$ must be zero around the origin $A(p)$, so we follow
a classical procedure (see \cite{AnCa-Hp-90}). Let $R>0$ such that
$B(0,2R)\subset V_{\varepsilon}\setminus W_{\varepsilon}$ ($\varepsilon>0$
small). Let $\psi$ be a smooth cut of function equal to $0$ in $B(0,R)$
and $1$ in $V_{\varepsilon}\setminus B(0,2R)$. Then, with $T=\vartheta\mbox{ or }\omega$,
we have 
\[
dH(\psi T)=\psi T-H(d\psi\wedge T),
\]
and $H(d\psi\wedge T)$ is closed in $W_{\varepsilon}$ and, if $d\tau=H(d\psi\wedge T)$
then $d(H(\psi T)+\tau)=\vartheta$ in $W_{\varepsilon}$.

Then the conclusion follows the next proposition because, using a
standard Poincaré homotopy (reducing eventually $W_{\varepsilon}$),
$\tau$ can be chosen satisfying 
\[
\left\Vert \chi_{W_{\varepsilon}}\delta_{\Omega}^{\nicefrac{1}{m}-1}|\tau|\right\Vert _{W^{1}(\Omega)}\lesssim\left\Vert \tau\right\Vert _{L^{\infty}\left(W_{\varepsilon}\right)}\leq\left\Vert H(d\psi\wedge T)\right\Vert _{L^{\infty}\left(W_{\varepsilon}\right)}.
\]

\begin{spprop}
There exists a constant $C$ such that:
\begin{enumerate}
\item with the above notation, $\left\Vert H(d\psi\wedge T)\right\Vert _{L^{\infty}\left(W_{\varepsilon}\right)}\leq C\left\Vert T\right\Vert _{W^{1}(\Omega)}$;
\item if $\vartheta$ is a smooth current of degree $2$ identically zero
in $B\left(0,R\right)$, then $\left\Vert \chi_{W_{\varepsilon}}\left\Vert H(\vartheta)\right\Vert _{\mathbbmss k}d\lambda\right\Vert _{W^{1}(\Omega)}\leq C\left\Vert \delta_{\Omega}\left\Vert \vartheta\right\Vert _{\mathbbmss k}d\lambda\right\Vert _{W^{1}(\Omega)}$;
\item if $\omega$ is a smooth current of degree $1$ identically zero in
$B\left(0,R\right)$, then $\left\Vert \chi_{W_{\varepsilon}}\delta_{\Omega}^{\nicefrac{1}{m}-1}H(\omega)d\lambda\right\Vert _{W^{1}(\Omega)}\leq C\left\Vert \left\Vert \omega\right\Vert _{\mathbbmss k}d\lambda\right\Vert _{W^{1}(\Omega)}$;
\end{enumerate}
\end{spprop}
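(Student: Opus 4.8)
The plan is to carry out, in the anisotropic geometry, the homotopy estimate of \cite{AnCa-Hp-90}. The basic mechanism is that the average over $\Lambda\in\mathbb{D}^{n}$ turns the value of a current at $h_{\Lambda}(t,z)$ into its mean over the pseudo-ball $Q(t,z)$ of \lemref{properties-h-lambda}: the map $\Lambda\mapsto h_{\Lambda}(t,z)$ is affine in $\Lambda$, its $i$-th direction and radius being those of the extremal basis at scale $\delta_{\Omega}(Z_{j})$ or $2^{-k}$ occurring in (\ref{eq:definition-Hlambda}), so by \lemref{basic-properties-geometry} and \lemref{3.4-maj-deriv-rho-equiv-tho-i-z-zeta} its Jacobian is comparable to the euclidean volume $\lambda(Q(t,z))$, while (4b) of \lemref{properties-h-lambda} gives $c_{1}Q(t,z)\subset Q_{1}(t,z)$ once $t\geq t_{0}>0$; moreover, only $t\in[t_{0},1]$ contributes, because $\vartheta$ and $\omega$ vanish on $B(0,R)$ and $h_{\Lambda}(t,z)\to A(p)=0$ uniformly as $t\to0$. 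Substituting $w=h_{\Lambda}(t,z)$ therefore gives, for smooth $\vartheta$ of degree $2$ and every vector $v$,
\[
\Bigl|\oint\vartheta(h_{\Lambda}(t,z))(Y_{t},Z_{t,v})\,d\Lambda\Bigr|\lesssim\frac{1}{\lambda(Q(t,z))}\int_{Q(t,z)}\bigl|\vartheta(w)(\widetilde{Y},\widetilde{Z}_{v})\bigr|\,d\lambda(w),
\]
$\widetilde{Y},\widetilde{Z}_{v}$ being the images of $Y_{t},Z_{t,v}$ under the substitution, and similarly in degree $1$; by (3) of \lemref{properties-h-lambda}, $\delta_{\Omega}(w)\simeq\max(1-t,-\rho(z))$ on $Q(t,z)$, and the cases $\varphi(2^{-k}/(-\rho(z)))\neq0$, $=0$ of (\ref{eq:definition-Hlambda}) are exactly $1-t\lesssim\delta_{\Omega}(z)$ and $1-t\gtrsim\delta_{\Omega}(z)$, i.e.\ the two branches defining $Q(t,z)$.

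\emph{Proof of (1).} Here $d\psi$ is supported in $B(0,2R)\setminus B(0,R)\Subset\Omega$, a set on which $\delta_{\Omega}\simeq1$, so by the last remark of \secref{Carleson-currents} the total mass $|d\psi\wedge T|(\Omega)$ is $\lesssim\|T\|_{W^{1}(\Omega)}$. For $z\in W_{\varepsilon}$, (4a) of \lemref{properties-h-lambda} keeps $t\mapsto h_{\Lambda}(t,z)$ inside $B_{e}(p,r_{1})\cap\Omega$, and the integrand defining $H(d\psi\wedge T)(z)$ is nonzero only for $t$ in a subinterval where $h_{\Lambda}(t,z)\in\operatorname{supp}(d\psi)$, hence $\delta_{\Omega}(h_{\Lambda}(t,z))\simeq1$ and $Y_{t},Z_{t,v}$ are bounded; integrating in $t$ and averaging in $\Lambda$ then gives $|H(d\psi\wedge T)(z)|\lesssim|d\psi\wedge T|(\Omega)\lesssim\|T\|_{W^{1}(\Omega)}$, uniformly on $W_{\varepsilon}$, which is (1). (The same remark makes the Poincar\'e homotopy solving $d\tau=H(d\psi\wedge T)$ in $W_{\varepsilon}$ obey the $L^{\infty}$ bound used just before the statement.)

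\emph{Pointwise estimate for (2) and (3).} Combining the displayed inequality with $|\vartheta(w)(\widetilde{Y},\widetilde{Z}_{v})|\leq\|\vartheta(w)\|_{\mathbbmss k}\,k(w,\widetilde{Y})\,k(w,\widetilde{Z}_{v})$ and $k(z,v)\simeq\delta_{\Omega}(z)/\tau(z,v,\delta_{\Omega}(z))$, the whole matter reduces to estimating $k(w,\widetilde{Y})$ and $k(w,\widetilde{Z}_{v})$ for $w\in Q(t,z)$. A computation with (2) and (5) of \lemref{basic-properties-geometry}, using that $z-A(p)$ is uniformly transverse to $\partial\Omega$ by the choice of $A(p)$, gives $k(w,Y_{t})\simeq1$; and, decomposing $Z_{t,v}$ on the $\delta_{\Omega}(w)$-extremal basis at $w$ --- bounding the derivatives of the $\Phi_{j}$ and of $\varphi(2^{-k}/(-\rho))$ entering $Z_{t,v}$ by means of (1) of \lemref{basic-properties-geometry} and \lemref{3.4-maj-deriv-rho-equiv-tho-i-z-zeta}, and applying (2) of \lemref{basic-properties-geometry} --- one controls $k(w,Z_{t,v})/k(z,v)$ by a power of $\delta_{\Omega}(w)/\delta_{\Omega}(z)$ fixed by (\ref{eq:comp-tau-epsilon-tau-lambda-epsilon}). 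This calculation, the strong modification of the one in \cite{BrGrHp99} and the principal difficulty of the whole proof, is where the precise two-regime form of (\ref{eq:definition-Hlambda}) is essential; it yields pointwise bounds of $\|H(\vartheta)(z)\|_{\mathbbmss k}$ (resp.\ $|H(\omega)(z)|$) by $\int_{t_{0}}^{1}\lambda(Q(t,z))^{-1}\int_{Q(t,z)}\|\vartheta(w)\|_{\mathbbmss k}\,d\lambda(w)\,dt$ (resp.\ with $\|\omega(w)\|_{\mathbbmss k}$) carrying appropriate powers of $\delta_{\Omega}(w)/\delta_{\Omega}(z)$ and $1-t$; since in degree $1$ there is one fewer contraction, compensated only up to the power $\nicefrac{1}{m}$ via (\ref{eq:comp-tau-epsilon-tau-lambda-epsilon}), one must carry the singular weight $\delta_{\Omega}^{\nicefrac{1}{m}-1}$ of (3).

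\emph{Conclusion of (2) and (3).} Fix $Z\in\partial\Omega$ and $0<r<\varepsilon_{0}$. Integrating the pointwise bound over $z\in P_{r}(Z)\cap W_{\varepsilon}$ and using Fubini in $(z,w)$, one meets, for each $t$, the measure $\lambda(\{z\in P_{r}(Z):w\in Q(t,z)\})$, which by (\ref{eq:inclusion-polydisques-pseudo})--(\ref{eq:polydiscs-pseudodistance}) is $\simeq\min(\lambda(P_{r}(Z)),\lambda(Q(t,z)))$. Decomposing the $t$-integral into the dyadic ranges $1-t\simeq2^{-l}$, each of $dt$-length $\simeq2^{-l}$, and reorganising the resulting sum over dyadic scales, the point $w$ runs over a shell $\{\delta_{\Omega}(w)\simeq2^{-l}\}$ inside a polydisk $P_{K\max(r,2^{-l})}(Z)$, so the corresponding block is bounded --- via the Carleson hypothesis on $\vartheta$ (resp.\ $\omega$) --- by $\left\Vert \delta_{\Omega}\left\Vert \vartheta\right\Vert _{\mathbbmss k}d\lambda\right\Vert _{W^{1}(\Omega)}\,\sigma(P_{K\max(r,2^{-l})}(Z)\cap\partial\Omega)$ (resp.\ by $\left\Vert \left\Vert \omega\right\Vert _{\mathbbmss k}d\lambda\right\Vert _{W^{1}(\Omega)}\,\sigma(P_{K\max(r,2^{-l})}(Z)\cap\partial\Omega)$). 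Using $\sigma(P_{u}(Z)\cap\partial\Omega)\simeq u\prod_{i\geq2}\tau_{i}(Z,u)^{2}$, the doubling of $\sigma$, and $r<\varepsilon_{0}$, the sum telescopes to $\lesssim\sigma(P_{r}(Z)\cap\partial\Omega)$ times the relevant $W^{1}$-norm --- in degree $1$ the weight $\delta_{\Omega}^{\nicefrac{1}{m}-1}$ being precisely what makes this convergent. Together with the elementary bound on the part supported away from $\partial\Omega$ and with \propref{comparison-carleson-norms-smooth-forms}, this gives (2) and (3).
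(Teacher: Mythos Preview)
Your outline follows the paper's route --- homotopy operator, pointwise $\mathbbmss k$-norm estimate via $k(w,Y_t)$ and $k(w,Z_{t,v})$, then a Carleson-to-Carleson bound for the resulting averaging operator --- and your treatment of (1) is essentially the paper's. For the pointwise step, note that the paper does not get $k(w,Y_t)\simeq1$: it proves instead $k(h_\Lambda(t,z),Y_t)\lesssim\bigl(\delta_\Omega(h_\Lambda(t,z))/(1-t)\bigr)^{1-1/m}$ and $k(h_\Lambda,Z_{t,v})\lesssim\delta_\Omega(z)^{1/m}\delta_\Omega(h_\Lambda)^{1-1/m}/\tau(z,v,\delta_\Omega(z))$, by estimating separately each term of $Y_t$ (those from $\psi_k'$ and from $\langle d\Phi_j(tz);z\rangle$) in the two $\varphi$-regimes. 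Your sharper claim may hold, but ``a computation with (2) and (5) of \lemref{basic-properties-geometry}'' is not yet that computation. Either way one arrives at the operator $T(f)(z)=\delta_\Omega(z)^{1/m-1}\int_{t_0}^1(1-t)^{1/m-1}\oint_{Q(t,z)}f(w)\delta_\Omega(w)^{1-2/m}\,d\lambda(w)\,dt$ of \lemref{lemma1-proof-prop41}.

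The genuine gap is in your ``Conclusion'' paragraph. Two of your assertions fail in the regime $1-t\leq-\rho(z)$, where $Q(t,z)=\frac{1-t}{-\rho(z)}\mathcal{B}(tz,-\rho(z))$. First, there $\delta_\Omega(w)\simeq\delta_\Omega(z)$, not $\simeq1-t$, so when $1-t\simeq2^{-l}$ the point $w$ does \emph{not} run over a shell $\{\delta_\Omega(w)\simeq2^{-l}\}$ --- this contradicts the identification you yourself made earlier ($\delta_\Omega(w)\simeq\max(1-t,-\rho(z))$). Second, the Fubini claim $\lambda(\{z\in P_r(Z):w\in Q(t,z)\})\simeq\min(\lambda(P_r(Z)),\lambda(Q(t,z)))$ is not justified for these shrunk pseudo-balls, whose volume depends on $\rho(z)$ and not only on $t,w$; the pseudo-distance symmetry (\ref{eq:inclusion-polydisques-pseudo})--(\ref{eq:polydiscs-pseudodistance}) does not by itself give this. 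The paper deals with exactly this difficulty by a different decomposition: split $T=T_1+T_2$ at $t=1-\varepsilon$ (the tent radius), bound $T_1$ pointwise, and for $T_2$ pass to polar coordinates $z=r\eta$, Fubini against $(t,r,\eta)$, and split the domain $\mathcal{D}_w$ according to $r\gtrless t$. This last split isolates the two branches of $Q(t,z)$, and the transversality of $\overrightarrow{A(p)\zeta}$ to $\partial\Omega$ (the last bullet defining $W$) is used to bound the length of the $r$-set in $\mathcal{D}_w^2$. Your dyadic scheme in $1-t$ has not replaced this argument; as written, the sum does not telescope.
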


\begin{proof}
We begin to prove (2). Note that there exists $t_{0}>0$ such that,
$\vartheta\left(h_{\Lambda}(t,z)\right)\neq0$ implies $t>t_{0}$
so that
\[
H_{\Lambda}(\vartheta)(z)(v)=\int_{t_{0}}^{1}\vartheta\left(h_{\Lambda}(t,z)\right)\left(Y_{t},Z_{t,v}\right)dt.
\]

\begin{llemsp}
\label{lem:lemma1-proof-prop41}Let $T$ be the operator defined on
non negative functions $f$ on $V_{\varepsilon}$, by 
\[
T(f)(z)=\delta_{\Omega}(z)^{\nicefrac{1}{m}-1}\int_{t_{0}}^{1}(1-t)^{\nicefrac{1}{m}-1}\left(\oint_{Q(t,z)}f(w)\delta_{\Omega}^{1-\nicefrac{2}{m}}(w)d\lambda(w)\right)dt,\,z\in W_{\varepsilon}.
\]
Then, for $z\in W_{\varepsilon}$,
\[
\left\Vert H_{\vartheta}(z)\right\Vert _{\mathbbmss k}\lesssim T\left(\delta_{\Omega}(\cdot)\left\Vert \vartheta(\cdot)\right\Vert _{\mathbbmss k}\right)(z).
\]
\end{llemsp}

\begin{proof}[Proof of the lemma]
 By the definition of the ``norm'' $\left\Vert \cdot\right\Vert _{\mathbbmss k}$
we have
\[
\left|H_{\vartheta}(z)(v)\right|\leq\oint\left[\int_{t_{0}}^{1}\left\Vert \vartheta\left(h_{\Lambda}(t,z)\right)\right\Vert _{\mathbbmss k}k\left(h_{\Lambda}(t,z),Y_{t}\right)k\left(h_{\Lambda}(t,z),Z_{t,v}\right)dt\right]d\Lambda.
\]
By the definition of $h_{\Lambda}(t,z)$
\begin{multline*}
Y_{t}=z+c\sum_{k,j}\psi_{k}(1-t)\Phi_{j}(tz)\sum_{i=1}^{n}A_{i,j,k}(z)+\\
ct\left[\sum_{k,j}\left\langle d\Phi_{j}(tz);z\right\rangle \psi_{k}(1-t)-\Phi_{j}(tz)\psi_{k}'(1-t)\right]\sum_{i}A_{i,j,k}(z).
\end{multline*}

Assume $\Phi_{j}(tz)\neq0$ and $\psi_{k}(1-t)\neq0$.

Let us first estimate $k\left(h_{\Lambda}(t,z),Y_{t}\right)$. We
have $\left\langle d\Phi_{j}(tz);z\right\rangle \lesssim\delta_{\Omega}\left(Z_{j}\right)^{-1}\simeq\delta_{\Omega}(tz)^{-1}\lesssim\frac{1}{1-t}$,
$\left|\psi_{k}'(1-t)\right|\lesssim\frac{1}{1-t}$ and, by (3) (a)
of \lemref{properties-h-lambda}, 
\[
\tau\left(h_{\Lambda}(t,z),e_{i}\left(Z_{j},2^{-k}\right),\delta_{\Omega}\left(h_{\Lambda}(t,z)\right)\right)\gtrsim\tau\left(Z_{j},e_{i}\left(Z_{j},2^{-k}\right),\delta_{\Omega}\left(h_{\Lambda}(t,z\right)\right).
\]
As $s>1$ implies $\tau(p,v,s\delta)\gtrsim s^{\nicefrac{1}{m}}\tau(p,v,\delta)$
we get, if $2^{-k}\leq-\rho(z)$,
\[
\tau_{i}\left(Z_{j},2^{-k}\right)k\left(h_{\Lambda}(t,z),e_{i}\left(Z_{j},2^{-k}\right)\right)\lesssim\delta_{\Omega}\left(h_{\Lambda}(t,z)\right)\left(\frac{\delta_{\Omega}\left(h_{\Lambda}(t,z)\right)}{2^{-k}}\right)^{-\nicefrac{1}{m}}.
\]
Similarly, if $2^{-k}\geq-\nicefrac{\rho(z)}{2}$,
\[
\tau_{i}\left(Z_{j},\delta_{\Omega}\left(Z_{j}\right)\right)k\left(h_{\Lambda}(t,z),e_{i}\left(Z_{j},\delta_{\Omega}\left(Z_{j}\right)\right)\right)\lesssim\delta_{\Omega}\left(Z_{j}\right)\simeq\delta_{\Omega}\left(h_{\Lambda}(t,z)\right),
\]
and (because $\delta_{\Omega}\left(h_{\Lambda}(t,z)\right)\simeq2^{-k}$)
\[
\frac{2^{-k}}{\delta_{\Omega}\left(Z_{j}\right)}\tau_{i}\left(Z_{j},\delta_{\Omega}\left(Z_{j}\right)\right)k\left(h_{\Lambda}(t,z),e_{i}\left(Z_{j},\delta_{\Omega}\left(Z_{j}\right)\right)\right)\lesssim\delta_{\Omega}\left(h_{\Lambda}(t,z)\right)\left(\frac{\delta_{\Omega}\left(h_{\Lambda}(t,z)\right)}{2^{-k}}\right)^{-\nicefrac{1}{m}}.
\]
These estimates give
\[
k\left(h_{\Lambda}(t,z),Y_{t}\right)\lesssim\left(\frac{\delta_{\Omega}\left(h_{\Lambda}(t,z)\right)}{2^{-k}}\right)^{1-\nicefrac{1}{m}}\simeq\left(\frac{\delta_{\Omega}\left(h_{\Lambda}(t,z)\right)}{1-t}\right)^{1-\nicefrac{1}{m}}.
\]
The estimate $k\left(h_{\Lambda}(t,z),Z_{t,v}\right)$ is easy: recording
that ((1) of \lemref{basic-properties-geometry}) 
\[
\left|\frac{\partial\Phi_{j}(\cdot)}{\partial v}\right|\lesssim\tau\left(\cdot,v,\delta_{\Omega}\left(Z_{j}\right)\right)^{-1},\,\left|\frac{\partial}{\partial v}(-\rho)(z)\right|\lesssim\frac{\delta_{\Omega}(z)}{\tau\left(z,v,\delta_{\Omega}(z)\right)}
\]
and that $\varphi'\left(\frac{2^{-k}}{-\rho(z)}\right)\neq0$ implies
$2^{-k}\simeq-\rho(z)$, one easily gets
\[
k\left(h_{\Lambda}(t,z),Z_{t,v}\right)\lesssim\frac{\delta_{\Omega}(z)^{\nicefrac{1}{m}}\delta_{\Omega}\left(h_{\Lambda}(t,z)\right)^{1-\nicefrac{1}{m}}}{\tau\left(z,v,\delta_{\Omega}(z)\right)}.
\]

Then we obtain
\[
\left\Vert H(\vartheta)(z)\right\Vert _{\mathbbmss k}\lesssim\delta_{\Omega}(z)^{\nicefrac{1}{m}-1}\oint d\Lambda\int_{0}^{1}\left\Vert \vartheta\left(h_{\Lambda}(t,z)\right)\right\Vert _{\mathbbmss k}\delta_{\Omega}\left(h_{\Omega}(t,z)\right)^{2-\nicefrac{2}{m}}\left(1-t\right)^{\nicefrac{1}{m}-1},
\]
and the lemma is obtained making the change of variables $\Lambda\mapsto h_{\Lambda}(t,z)$,
the jacobian being proportional to the volume of $Q_{1}(t,z)$ which
is equivalent to $Q(t,z)$ because $t\geq t_{0}$ implies $c{}_{1}Q(t,z)\subset Q_{1}(t,z)\subset Q(t,z)$.\end{proof}
\begin{llemsp}
The operator $T$ of \lemref{lemma1-proof-prop41} satisfies the following
estimate
\[
\left\Vert \chi_{W_{\varepsilon}}T(f)d\lambda\right\Vert _{W^{1}(\Omega)}\lesssim\left\Vert \chi_{V_{\varepsilon}}fd\lambda\right\Vert _{W^{1}(\Omega)}.
\]
\end{llemsp}

\begin{proof}[Proof of the lemma]
Let $B(\xi,\varepsilon)$ be a pseudo-ball on $\partial\Omega$ and
$\widehat{B}(\xi,\varepsilon)$ the tent over $B(\xi,\varepsilon)$.
For $z\in\widehat{B}(\xi,\varepsilon)$ we decompose $T(f)$ into
two pieces (to simplify the notation we write $\left\Vert fd\lambda\right\Vert _{W^{1}}$
instead of $\left\Vert \chi_{V_{\varepsilon}}fd\lambda\right\Vert _{W^{1}(\Omega)}$):
\[
T_{1}(f)(z)=\delta_{\Omega}(z)^{\nicefrac{1}{m}-1}\int_{t_{0}}^{1-\varepsilon}(1-t)^{\nicefrac{1}{m}-1}\left(\oint_{Q(t,z)}f(w)\delta_{\Omega}^{1-\nicefrac{2}{m}}(w)d\lambda(w)\right)dt,
\]
and 
\[
T_{2}(f)(z)=\delta_{\Omega}(z)^{\nicefrac{1}{m}-1}\int_{1-\varepsilon}^{1}(1-t)^{\nicefrac{1}{m}-1}\left(\oint_{Q(t,z)}f(w)\delta_{\Omega}^{1-\nicefrac{2}{m}}(w)d\lambda(w)\right)dt.
\]

Consider first $T_{1}(f)$. As $t<1-\varepsilon$ and $\delta_{\Omega}(z)\lesssim\varepsilon$,
we have $\delta_{\Omega}\left(w\right)\simeq\delta_{\Omega}(tz)\simeq1-t+\delta_{\Omega}(z)\simeq1-t$,
$d\left(w,\xi\right)\lesssim1-t$ and $Q(t,z)\subset\widehat{B}(\xi,K(1-t))$.
Then (note that $\widehat{B}(\xi,1-t)$$\subset KQ(t,z)$, because
$-\rho(z)\simeq\delta_{\Omega}(z)\lesssim\varepsilon\lesssim1-t$)
\[
\int_{Q(t,z)}f(w)d\lambda(w)\lesssim\left\Vert fd\lambda\right\Vert _{W^{1}}\frac{\mbox{Vol}\left(\widehat{B}(\xi,1-t)\right)}{1-t}\lesssim\left\Vert fd\lambda\right\Vert _{W^{1}}\frac{\mbox{Vol}\left(Q(t,z)\right)}{1-t},
\]
and, using $\delta_{\Omega}\left(w\right)\simeq1-t$, we get
\[
T_{1}(f)(z)\lesssim\left\Vert fd\lambda\right\Vert _{W^{1}}\delta_{\Omega}(z)^{\nicefrac{1}{m}-1}\int_{t_{0}}^{1-\varepsilon}(1-t)^{-\nicefrac{1}{m}-1}dt\lesssim\left\Vert fd\lambda\right\Vert _{W^{1}}\delta_{\Omega}(z)^{\nicefrac{1}{m}-1}\varepsilon^{-\nicefrac{1}{m}}
\]
and
\[
\int_{\widehat{B}(\xi,\varepsilon)}T_{1}(f)(z)d\lambda(z)\lesssim\left\Vert fd\lambda\right\Vert _{W^{1}}\varepsilon^{-\nicefrac{1}{m}}\int_{\widehat{B}(\xi,\varepsilon)}\delta_{\Omega}(z)^{\nicefrac{1}{m}-1}d\lambda(z)\lesssim\left\Vert fd\lambda\right\Vert _{W^{1}}\sigma(B(\xi,\varepsilon)).
\]

Consider now $T_{2}(f)$. $\widehat{B}(\xi,\varepsilon)$ is equivalent
to the set 
\[
\left\{ r\eta\mbox{ such that }1-\varepsilon\leq r\leq1\mbox{ and }\eta\in B(\xi,\varepsilon)\right\} ,
\]
and if $z=r\eta$, $\delta_{\Omega}(z)\simeq1-r$, and for $w\in Q(t,z)$,
$w\in\widehat{B}(\xi,K\varepsilon)$ and $\delta_{\Omega}(w)\simeq\delta_{\Omega}(tz)\simeq1-tr$.
Then
\begin{multline*}
I=\int_{\widehat{B}(\xi,\varepsilon)}T_{2}(f)(z)d\lambda(z)\lesssim\\
\int_{\widehat{B}(\xi,K\varepsilon)}f(w)\delta_{\Omega}(w)^{-\nicefrac{1}{m}}\left(\int_{\mathcal{D}_{w}}\frac{(1-r)^{\nicefrac{1}{m}-1}(1-t)^{\nicefrac{1}{m}-1}(1-tr)^{1-\nicefrac{1}{m}}}{\mbox{Vol}\left(Q(t,r\eta)\right)}drdtd\sigma(\eta)\right)d\lambda(w)
\end{multline*}
where
\[
\mathcal{D}_{w}=\left\{ (t,r,\eta)\mbox{ such that }w\in Q(t,r\eta),\,(t,r)\in\left[1-\varepsilon,1\right]^{2}\mbox{ and }\eta\in B(\xi,\varepsilon)\right\} .
\]

Note that $1-tr\simeq\max\left\{ (1-t),(1-r)\right\} $ and let us
cut $\mathcal{D}_{w}$ into two parts
\[
\mathcal{D}_{w}^{1}=\mathcal{D}_{w}\cap\left\{ r\geq t\right\} \mbox{ and }\mathcal{D}_{w}^{2}=\mathcal{D}_{w}\cap\left\{ r<t\right\} ,
\]
and define $I_{i}$, $i=1,2$, replacing in the definition of $I$
$\mathcal{D}_{w}$ by $\mathcal{D}_{w}^{i}$.

If $\left(t,r,\eta\right)\in\mathcal{D}_{w}^{1}$, $\delta_{\Omega}(tr\eta)\simeq1-t$
and $\delta_{\Omega}(w)\simeq1-t$. Let $w_{1}$ be the intersection
of $\partial\Omega$ with the half real line passing through $0$
and $w$. Then $\eta\in B\left(w_{1},K_{1}(1-t)\right)$, $t\in\left[1-K_{2}\delta_{\Omega}(w),1-c_{2}\delta_{\Omega}(w)\right]$
and $r\geq t\geq1-K_{2}\delta_{\Omega}(w)$. As (by (2) of \propref{comparison-carleson-norms-smooth-forms}
and (\ref{eq:equivalence-pseudoballs-polydisk})) 
\[
\mbox{Vol}\left(Q(t,r\eta)\right)\simeq\mbox{Vol}\left(\mathcal{B}(w,1-t)\right)\simeq(1-t)\sigma\left(B\left(w_{1},1-t\right)\right)\simeq(1-t)\sigma\left(B\left(w_{1},K_{1}(1-t)\right)\right),
\]
we get
\begin{eqnarray*}
I_{1} & \lesssim & \int_{\widehat{B}(\xi,K\varepsilon)}f(w)\delta_{\Omega}(w)^{-\nicefrac{1}{m}}\left(\int_{\mathcal{D}_{w}^{1}}\frac{(1-r)^{\nicefrac{1}{m}-1}}{(1-t)\sigma\left(B\left(w_{1},K_{1}(1-t)\right)\right)}drdt\sigma(\eta)\right)d\lambda(w)\\
 & \lesssim & \int_{\widehat{B}(\xi,K\varepsilon)}f(w)\delta_{\Omega}(w)^{-\nicefrac{1}{m}}\int_{1-K_{2}\delta_{\Omega}(w)}^{1-c_{2}\delta_{\Omega}(w)}\left(\int_{B\left(w_{1},K_{1}(1-t)\right)}\frac{d\sigma(\eta)}{\sigma\left(B\left(w_{1},K_{1}(1-t)\right)\right)}\right.\\
 &  & \hspace{4.7cm}\left.\int_{1-K_{2}\delta_{\Omega}(w)}^{1}(1-r)^{\nicefrac{1}{m}-1}dr\right)\frac{dt}{1-t}d\lambda(w)\\
 & \lesssim & \int_{\widehat{B}(\xi,K\varepsilon)}f(w)d\lambda(w)\lesssim\left\Vert fd\lambda\right\Vert _{W^{1}}\sigma\left(B(\xi,K\varepsilon)\right)\lesssim\left\Vert fd\lambda\right\Vert _{W^{1}}\sigma\left(B(\xi,\varepsilon)\right).
\end{eqnarray*}

Finally, if $\left(t,r,\eta\right)\in\mathcal{D}_{w}^{2}$, $\delta_{\Omega}(w)\simeq1-r$
and
\begin{itemize}
\item if $\frac{1-t}{-\rho(r\eta)}\leq1$, then $w\in\frac{1-t}{-\rho(r\eta)}\mathcal{B}(tr\eta,-\rho(r\eta))\subset\frac{1-t}{1-r}\mathcal{B}(tr\eta,1-r)$
so $tr\eta\in K_{1}\frac{1-t}{1-r}\mathcal{B}(w,1-r)$, and, moreover
\[
\mbox{Vol}\left(\frac{1-t}{-\rho(r\eta)}\mathcal{B}(tr\eta,-\rho(r\eta))\right)\simeq\mbox{Vol}\left(\frac{1-t}{1-r}\mathcal{B}(w,1-r)\right);
\]

\item if $\frac{1-t}{-\rho(r\eta)}\geq1$, then $w\in\mathcal{B}(tr\eta,1-t)\subset K'\frac{1-t}{1-r}\mathcal{B}(w,1-r)$,
because in this case $1-t\simeq1-r$, so $tr\eta\in K_{1}\frac{1-t}{1-r}\mathcal{B}(w,1-r)$,
and
\[
\mbox{Vol}(\mathcal{B}(tr\eta,1-r))\simeq\mbox{Vol (}\mathcal{B}(w,1-r)).
\]

\end{itemize}
Now, for $t$, $r$ and $w$ fixed, $\sigma\left(\left\{ \eta\mbox{ such that }(t,r,\eta)\in\mathcal{D}_{w}^{2}\right\} \right)\lesssim\frac{1}{1-t}\mbox{Vol}\left(\frac{1-t}{1-r}\mathcal{B}(w,1-r)\right)$,
and, $\frac{1-t}{1-r}\mathcal{B}(w,1-r)\subset K_{2}\mathcal{B}(w,1-t)$
by (\ref{eq:equivalence-pseudoballs-polydisk}) and (\ref{eq:comp-tau-epsilon-tau-lambda-epsilon}),
and, by the last property of $W$, for $w$ and $t$ fixed, the lengh
of the set of $r$ such that there exists $\eta$ such that $tr\eta\in\mathcal{D}_{w}^{2}$
is $\lesssim1-t$.

Then 
\begin{multline*}
I_{2}\lesssim\int_{\widehat{B}(\xi,K\varepsilon)}f(w)\delta_{\Omega}(w)^{-\nicefrac{1}{m}}\\
\int_{1-K_{3}\delta_{\Omega}(w)}^{1}\left(\int_{\left\{ (r,\eta)\mbox{ s. t. }tr\eta\in\mathcal{D}_{w}^{2}\right\} }\frac{d\sigma(\eta)dr}{\mbox{Vol}\left(\frac{1-t}{1-r}\mathcal{B}\left(w,1-t\right)\right)}\right)(1-t)^{\nicefrac{1}{m}-1}dtd\lambda(w),
\end{multline*}
and we get $I_{2}\lesssim\int_{\widehat{B}(\xi,K\varepsilon)}f(w)d\lambda(w)\lesssim\left\Vert fd\lambda\right\Vert _{W^{1}}\sigma\left(B(\xi,\varepsilon)\right)$
finishing the proof of the lemma.
\end{proof}

(2) of the proposition follows immediately the lemmas.

Assertion (3) of the proposition is proved in a similar and easier
way. We will not give more details.

We finish giving briefly the proof of (1) of the proposition.There
exists $\delta>0$ such that $t<\delta$ or $t>1-\delta$ implies
$d\psi\wedge\vartheta\left(h_{\Lambda}(z,t)\right)=0$ so
\[
H(d\psi\wedge\vartheta)(z)\left(v_{1},v_{2}\right)=\oint_{\Lambda}\int_{\delta}^{1-\delta}d\psi\wedge\vartheta\left(h_{\Lambda}(z,t)\right)\left(Y_{t},Z_{t,v_{1}},Z_{t,v_{2}}\right)dt.
\]

Note that, for $t\in\left[\delta,1-\delta\right]$ and $\left|v_{i}\right|\leq1$,
$\left|Z_{t,v_{i}}\right|$ and $\left|Y_{t}\right|$ are bounded
by $C=C(\delta)$, $i=1,2$. Then after the change of variables $w(\Lambda)=h_{\Lambda}(z,t)$
we get
\begin{eqnarray*}
\left|H(d\psi\wedge\vartheta)(z)\left(v_{1},v_{2}\right)\right| & \lesssim & \int_{\delta}^{1-\delta}dt\int_{Q(t,z)}\frac{\left|d\psi\wedge\vartheta\right|(w)}{\mbox{Vol}\left(Q(t,z)\right)}d\lambda(w)\\
 & \lesssim & \int_{\delta}^{1-\delta}\int_{Q(t,z)}\delta_{\Omega}(w)\left|d\psi\wedge\vartheta\right|(w)d\lambda(w)dt\\
 & \lesssim & \int_{\widehat{B}}\delta_{\Omega}(w)\left|d\psi\wedge\vartheta\right|(w)d\lambda(w)\\
 & \lesssim & \left\Vert \delta_{\Omega}d\psi\wedge\vartheta\right\Vert _{W^{1}}\lesssim\left\Vert \delta_{\Omega}\vartheta\right\Vert _{W^{1}}
\end{eqnarray*}
the first inequality coming from the fact that $\mbox{Vol}\left(Q(t,z)\right)$
and $\delta_{\Omega}(w)$ are bounded from below, $\widehat{B}$ in
the third inequality being a tent containing $Q(t,z)$ and the last
because $\sigma(B)$ is bounded from below.
\end{proof}
The proof of \propref{Local-solution-d-equation} is now complete.
\end{proof}

\section{Proof of Theorem \ref{thm:d-bar_resolution}\label{sec:Proof-of-d-bar-thm}}

Let us introduce the notion of $BMO_{s}$ functions, similarly to
$s$-Carleson measures and currents defined before \propref{regularization-global}:

$f$ is in $BMO_{s}(\partial D)$ if 
\[
\left\Vert f\right\Vert _{BMO_{s}(\partial D)}:=\sup_{z\in\partial D,\,s<t<\varepsilon_{0}}\int_{P_{t}(z)\cap\partial D}\left|f-\oint_{P_{t}(z)\cap\partial D}f\right|<+\infty.
\]

It is easy to see that, if $\omega$ is \emph{smooth}, then, the proof
of \propref{comparison-carleson-norms-smooth-forms} shows that
\[
\left\Vert \omega\right\Vert _{W_{s}^{1}(D)}\simeq\left\Vert \left\Vert \omega\right\Vert _{\mathbbmss k}d\lambda\right\Vert _{W_{s}^{1}(D)}.
\]

We start the proof of the theorem regularizing the current $\omega$
using \propref{regularization-global}: then (simplifying the notations),
for $\varepsilon>0$ small enough, the regularized current $\omega_{\varepsilon}$
is smooth $\overline{\partial}$-closed in $\Omega^{\varepsilon}=\left\{ \rho<-C\varepsilon\right\} $
and satisfies $\left\Vert \omega_{\varepsilon}\right\Vert _{W_{s}^{1}\left(\Omega^{\varepsilon}\right)}\lesssim\left\Vert \omega\right\Vert _{W^{1}(\Omega)}$.

Now we solve the equation $\overline{\partial}u_{\varepsilon}=\omega_{\varepsilon}$
using the method described in the proof of \cite[Theorem 2.4]{ChDu18}:
$u_{\varepsilon}$ is given by the formula
\[
u_{\varepsilon}(z)=\int_{\Omega^{\varepsilon}}K_{\varepsilon}^{1}(z,\zeta)\wedge\omega_{\varepsilon}(\zeta)-\overline{\partial}*\mathcal{N}_{\varepsilon}\left(\int_{\Omega^{\varepsilon}}P_{\varepsilon}(z,\zeta)\wedge\omega_{\varepsilon}(\zeta)\right)
\]
the kernels $K_{\varepsilon}^{1}$ and $P_{\varepsilon}$ are associated
to the defining function $\rho+C\varepsilon$ as described in \cite{CDMb,ChDu18}.
We proceed as H Skoda in \cite[Section 8]{Sko76}. Clearly, the $\mathcal{C}^{\infty}$-smooth
kernels $P_{\varepsilon}$, as well as their derivatives, converge
uniformly to the corresponding kernel $P$ of $\Omega$ so that $\int_{\Omega^{\varepsilon}}P_{\varepsilon}(z,\zeta)\wedge\omega_{\varepsilon}(\zeta)$
converges in every Sobolev norm to a $\overline{\partial}$-closed
form $g$ on $\Omega$ such that, for all integer $k$, $\left\Vert g\right\Vert _{H^{k}}$
is bounded by the total mass of $\omega$. Then 
\[
\overline{\partial}^{*}\mathcal{N}_{\varepsilon}\left(\int_{\Omega^{\varepsilon}}P_{\varepsilon}(z,\zeta)\wedge\omega_{\varepsilon}(\zeta)\right)
\]
 converges in $\mathcal{C}^{1}\left(\overline{\Omega}\right)$ to
a function $h$.

Let $\Phi_{\varepsilon}:\partial\Omega\rightarrow\partial\Omega^{\varepsilon}$
be a family of $\mathcal{C}^{\infty}$ diffeomorphisms such that $\Phi_{\varepsilon}$
converges to the identity uniformly in $\mathcal{C}^{\infty}$ norm
on $\partial\Omega$.

As $\overline{\partial}u_{\varepsilon}=\omega_{\varepsilon}$, denoting
$v_{\varepsilon}=\int_{\Omega^{\varepsilon}}K_{\varepsilon}^{1}(z,\zeta)\wedge\omega_{\varepsilon}(\zeta)$,
if $v_{\varepsilon}\circ\Phi_{\varepsilon}$ converges in $L^{1}(\partial\Omega)$
to $v=\int_{\Omega}K^{1}(z,\zeta)\wedge\omega(\zeta)$, for $z\in\partial\Omega$,
the function
\[
u(z)=\int_{\Omega}K^{1}(z,\zeta)\wedge\omega(\zeta)-h
\]
is a solution of the equation $\overline{\partial}_{b}u=\omega$.
By the properties of the kernels $K_{\varepsilon}^{1}$ and $K^{1}$
($K_{\varepsilon}^{1}$ converges uniformly on $\partial\Omega\times\overline{\Omega_{\eta}}$
($\eta>0$ fixed) to $K^{1}$) this convergence follows exactly the
proof made by H Skoda in \cite[p. 272]{Sko76}.

To conclude the proof of \thmref{d-bar_resolution} we have to show
that $\left\Vert v\right\Vert _{BMO(\partial\Omega)}\lesssim\left\Vert \omega\right\Vert _{W^{1}(\Omega)}$.
The proof of \cite[Theorem 2.4]{ChDu18} gives
\[
\left\Vert u_{\varepsilon}\right\Vert _{BMO_{\varepsilon}}\leq C_{1}\left\Vert \left\Vert \omega\right\Vert _{\mathbbmss k}\right\Vert _{W_{\varepsilon}^{1}\left(\Omega^{\varepsilon}\right)}\lesssim\left\Vert \omega\right\Vert _{W^{1}(\Omega)}
\]
with a constant $C_{1}$ uniform in $\varepsilon$ (small enough)
because the estimates of \cite[lemmas 3.4, 3.5 and 3.6]{ChDu18} are
uniform in a neighborhood of $\partial\Omega$. Thus the end of the
proof is the following lemma
\begin{sllem}
With the previous notations $\left\Vert u\right\Vert _{BMO(\partial\Omega)}\lesssim\sup_{\varepsilon}\left\Vert u_{\varepsilon}\right\Vert _{BMO_{\varepsilon}\left(\partial\Omega^{\varepsilon}\right)}$.\end{sllem}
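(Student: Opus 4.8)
The plan is to deduce a genuine $BMO$ estimate from the $BMO_\varepsilon$ estimates, which only control mean oscillation at scales $\gtrsim\varepsilon$, by letting $\varepsilon\to0$ and using that the transported functions $u_\varepsilon\circ\Phi_\varepsilon$ converge to $u$ on $\partial\Omega$. Indeed, $v_\varepsilon\circ\Phi_\varepsilon\to v$ in $L^{1}(\partial\Omega)$ by the convergence established above, and the term $h_\varepsilon:=\overline\partial^{*}\mathcal N_\varepsilon(\dots)$ converges to $h$ in $\mathcal C^{1}(\overline\Omega)$; since $\Phi_\varepsilon\to\mathrm{id}$ uniformly on $\partial\Omega$ this yields $h_\varepsilon\circ\Phi_\varepsilon\to h$ uniformly, hence $u_\varepsilon\circ\Phi_\varepsilon\to u$ in $L^{1}(\partial\Omega)$. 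Fixing $\xi\in\partial\Omega$ and $0<t<\varepsilon_0$ and writing $S=P_t(\xi)\cap\partial\Omega$, I must bound $\int_S\left|u-\oint_S u\right|d\sigma$ by $C\sup_\varepsilon\left\Vert u_\varepsilon\right\Vert_{BMO_\varepsilon(\partial\Omega^\varepsilon)}$, with $C$ independent of $\xi$ and $t$.

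The geometric core is a scale comparison between the pseudoball structures of $\partial\Omega$ and of $\partial\Omega^\varepsilon$. Take $\varepsilon$ small, $\varepsilon<ct$ with $c$ depending only on $\Omega$, and take $\Phi_\varepsilon$ to be the normal projection onto $\partial\Omega^\varepsilon$ (which does converge to the identity in $\mathcal C^{\infty}$): it displaces each point by $\simeq\varepsilon$, essentially along the complex normal. Since the complex-normal radius of $P_t(\xi)$ equals $t$ by \lemref{basic-properties-geometry}, such a displacement keeps $S$ inside a fixed dilate of $P_t(\xi)$; combining this with (\ref{eq:inclusion-polydisques-pseudo})--(\ref{eq:polydiscs-pseudodistance}) and with the fact — already used in the proof of \propref{regularization-global}, via \lemref{3.4-maj-deriv-rho-equiv-tho-i-z-zeta} and (\ref{eq:comp-tau-epsilon-tau-lambda-epsilon}) — that the geometries of $\Omega$ and of $\Omega^\varepsilon=\{\rho<-C\varepsilon\}$ coincide up to $\varepsilon$-independent constants at scales $\geq\varepsilon$, I obtain
\[
\Phi_\varepsilon(S)\subset T_\varepsilon:=P_{Kt}\bigl(\Phi_\varepsilon(\xi)\bigr)\cap\partial\Omega^\varepsilon,
\]
with $K$ depending only on $\Omega$, the pseudoball on the right being that of $\Omega^\varepsilon$. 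Moreover the surface Jacobian $J_\varepsilon$ of $\Phi_\varepsilon$ tends to $1$ uniformly.

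Now assemble. The functional $f\mapsto\int_S\left|f-\oint_S f\right|d\sigma$ is continuous on $L^{1}(S)$, so
\[
\int_S\left|u-\oint_S u\right|d\sigma=\lim_{\varepsilon\to0}\int_S\left|u_\varepsilon\circ\Phi_\varepsilon-\oint_S\bigl(u_\varepsilon\circ\Phi_\varepsilon\bigr)\right|d\sigma.
\]
For each small $\varepsilon$, replacing the subtracted mean by the constant $\oint_{T_\varepsilon}u_\varepsilon$, changing variables through $\Phi_\varepsilon$ (at the cost $\sup J_\varepsilon^{-1}\to1$), and enlarging $\Phi_\varepsilon(S)$ to $T_\varepsilon$ (the integrand being nonnegative) give, for $\varepsilon$ small,
\[
\int_S\left|u_\varepsilon\circ\Phi_\varepsilon-\oint_S\bigl(u_\varepsilon\circ\Phi_\varepsilon\bigr)\right|d\sigma\leq 4\int_{T_\varepsilon}\left|u_\varepsilon-\oint_{T_\varepsilon}u_\varepsilon\right|d\sigma_\varepsilon.
\]
If $Kt<\varepsilon_0$, then $\varepsilon<Kt<\varepsilon_0$ for $\varepsilon$ small, so the right-hand side is $\leq4\left\Vert u_\varepsilon\right\Vert_{BMO_\varepsilon(\partial\Omega^\varepsilon)}$, which is the bound sought. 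For the remaining scales $t$ with $\varepsilon_0/K\leq t<\varepsilon_0$, one covers $T_\varepsilon$ by a number of pseudoballs of the fixed scale $\varepsilon_0/(2K)$ — bounded independently of $\varepsilon,t,\xi$ by the doubling property — and chains the mean-oscillation estimates, a standard argument in spaces of homogeneous type, again obtaining a bound by $C\sup_\varepsilon\left\Vert u_\varepsilon\right\Vert_{BMO_\varepsilon}$. Taking the supremum over $\xi$ and $t$ completes the proof.

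The one genuinely delicate step is the scale comparison of the second paragraph: that the pseudoballs of $\partial\Omega$ are carried by $\Phi_\varepsilon$, with constants independent of $\varepsilon$, into pseudoballs of $\partial\Omega^\varepsilon$ of comparable scale, as soon as that scale exceeds a fixed multiple of $\varepsilon$. The remaining ingredients — the $L^{1}$ limit, continuity of mean oscillation, the enlargement step, and the chaining over bounded scales — are soft.
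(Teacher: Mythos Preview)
Your argument is correct and follows the same idea as the paper's proof: fix $\xi\in\partial\Omega$ and a scale $t$, use the $L^{1}$ convergence $u_\varepsilon\circ\Phi_\varepsilon\to u$ to pass from the mean oscillation of $u$ on $P_t(\xi)\cap\partial\Omega$ to that of $u_\varepsilon$ on a comparable ball of $\partial\Omega^\varepsilon$, and observe that for $\varepsilon$ small this oscillation is controlled by $\Vert u_\varepsilon\Vert_{BMO_\varepsilon(\partial\Omega^\varepsilon)}$ since eventually $\varepsilon<t$.

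The only real difference is that the paper is terser and avoids your dilation factor $K$ altogether: instead of the inclusion $\Phi_\varepsilon(S)\subset P_{Kt}(\Phi_\varepsilon(\xi))\cap\partial\Omega^\varepsilon$ and the subsequent enlargement, it simply notes that $\sigma_\varepsilon\bigl(B(\Phi_\varepsilon(\xi),t)\bigr)\to\sigma\bigl(B(\xi,t)\bigr)$ and that, via the change of variables $\Phi_\varepsilon$ and $u_\varepsilon\circ\Phi_\varepsilon\to u$ in $L^1$, the averages $\oint_{B(\Phi_\varepsilon(\xi),t)}u_\varepsilon$ converge to $\oint_{B(\xi,t)}u$; the same reasoning applied to $\lvert u_\varepsilon-\oint u_\varepsilon\rvert$ then gives convergence of the mean oscillations themselves. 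This direct limit makes your separate chaining argument for the range $\varepsilon_0/K\le t<\varepsilon_0$ unnecessary. Your version is a bit more explicit about the geometric comparison between the pseudoball structures of $\partial\Omega$ and $\partial\Omega^\varepsilon$, which is in any case the content implicitly used in the paper's convergence step.
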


\begin{proof}
Let $\xi\in\partial\Omega$ and let $B(\xi,t)$ be a pseudo-ball on
$\partial\Omega$. Then $\sigma_{\varepsilon}\left(B\left(\Phi_{\varepsilon}(\xi),t\right)\right)$
converges to $\sigma\left(B(\xi,t)\right)$ and
\[
\oint_{B\left(\Phi_{\varepsilon}(\xi),t\right)}u_{\varepsilon}=\frac{1}{\sigma_{\varepsilon}\left(B\left(\Phi_{\varepsilon}(\xi),t\right)\right)}\int_{B(\xi,t)}u_{\varepsilon}\circ\Phi_{\varepsilon}\left|\mbox{J}\Phi_{\varepsilon}\right|\stackrel{\varepsilon\rightarrow0}{\longrightarrow}\oint_{B(\xi,t)}u.
\]

The lemma follows easily.
\end{proof}

\bibliographystyle{amsalpha}

\providecommand{\bysame}{\leavevmode\hbox to3em{\hrulefill}\thinspace}
\providecommand{\MR}{\relax\ifhmode\unskip\space\fi MR }
\providecommand{\MRhref}[2]{%
  \href{http://www.ams.org/mathscinet-getitem?mr=#1}{#2}
}
\providecommand{\href}[2]{#2}
\begin{thebibliography}{}

\end{thebibliography}


\begin{thebibliography}{CDM14}

\bibitem[AC90]{AnCa-Hp-90}
M.~Andersson and H.~Carlsson, \emph{On varopoulos's theorem about zero sets of
  $\mathcal{H}^p$-functions}, Bull. Sci. Math. \textbf{114} (1990), no.~4,
  463--484.

\bibitem[Ale17]{Ale17}
W.~Alexandre, \emph{Zero sets of $\mathcal{H}^p$ functions in convex domains of
  finite type}, Math. Z. \textbf{287} (2017), no.~1-2, 85--115.

\bibitem[BCD98]{Bruna-Charp-Dupain-Annals}
J.~Bruna, Ph. Charpentier, and Y.~Dupain, \emph{Zeros varieties for the
  {N}evanlinna class in convex domains of finite type in $\mathbb{C}^n$}, Ann.
  of Math. \textbf{147} (1998), 391--415.

\bibitem[BG99]{BrGrHp99}
J~Bruna and S.~Grellier, \emph{Zero sets of $\mathcal{H}^p$ functions in convex
  domains of strict finite type in $\mathbb{C}^n$}, Complex Var. \textbf{38}
  (1999), 243--261.

\bibitem[CD]{ChDu18}
P.~Charpentier and Y.~Dupain, \emph{Weighted and boundary $l^{p}$ estimates for
  solutions of the $\overline{\partial}$-equation on lineally convex domains of
  finite type and applications}, to appear in Math Z.

\bibitem[CD14]{CD08}
\bysame, \emph{Extremal bases, geometrically separated domains and
  applications}, Algebra i Analiz \textbf{26} (2014), no.~1, 196--269.

\bibitem[CDM14]{CDMb}
P.~Charpentier, Y.~Dupain, and M.~Mounkaila, \emph{Estimates for {S}olutions of
  the $\bar\partial$-{E}quation and {A}pplication to the {C}haracterization of
  the {Z}ero {V}arieties of the {F}unctions of the {N}evanlinna {C}lass for
  {L}ineally {C}onvex {D}omains of {F}inite {T}ype}, J. Geom. Anal. \textbf{24}
  (2014), no.~4, 1860--1881.

\bibitem[CNS92]{Chang-Nagel-Stein}
D.~C. Chang, A.~Nagel, and E.~Stein, \emph{Estimates for the
  $\bar\partial$-{N}eumann problem for pseudoconvex domains in $\mathbb{C}^2$
  of finite type}, Acta Math. \textbf{169} (1992), 153--228.

\bibitem[Con02]{Conrad_lineally_convex}
M.~Conrad, \emph{Anisotrope optimale {P}seudometriken f\"ur lineal konvex
  {G}ebeite von endlichem {T}yp (mit {A}nwendungen)}, PhD thesis,
  Berg.Universit\"at-GHS Wuppertal (2002).

\bibitem[Cum01]{Cumenge-Navanlinna-convex}
A.~Cumenge, \emph{Zero sets of functions in the {N}evanlinna or the
  {N}evanlinna-{D}jrbachian classes}, Pacific J. Math. \textbf{199} (2001),
  no.~1, 79--92.

\bibitem[DM01]{DM01}
K.~Diederich and E.~Mazzilli, \emph{Zero varieties for the {N}evanlinna class
  on all convex domains of finite type}, Nagoya Math. Journal \textbf{163}
  (2001), 215--227.

\bibitem[Hen75]{Hen75}
G.~M. Henkin, \emph{Solutions with bounds for the equations of {H}. {L}ewy and
  {P}oincar\'e-{L}elong. {C}onstruction of functions of {N}evanlinna class with
  given zeros in a strongly pseudoconvex domain}, Dokl. Akad. Nauk SSSR
  \textbf{224} (1975), no.~4, 771--774.

\bibitem[JN61]{JoNi61}
F.~John and L.~Nirenberg, \emph{On functions of bounded mean oscillation},
  Comm. Pure Appl. Math. \textbf{14} (1961), 415--426.

\bibitem[Ngu01]{Ngu01}
N.~Nguyen, \emph{Un th\'eor\`eme de la couronne ${H}^p$ et z\'eros des
  fonctions de ${H}^p$ dans les convexes de type fini}, Pr\'epublication
  n\mbox{$^\circ$} 224, Laboratoire Emile Picard, Universit\'e de Toulouse III
  (2001).

\bibitem[Sko76]{Sko76}
H.~Skoda, \emph{Valeurs au bord pour les solutions de l'op\'erateur
  $\bar\partial$, et caract\'erisation des z\'eros des fonctions de la classe
  de {N}evanlinna}, Bull. Soc. Math. France \textbf{104} (1976), 225--299.

\bibitem[Var80]{VarHp80}
N.~Th. Varopoulos, \emph{Zeros of $h^p$ functions in several complex
  variables}, Pacific J. Math. \textbf{88} (1980), no.~1, 189--246.

\end{thebibliography}

\providecommand{\bysame}{\leavevmode\hbox to3em{\hrulefill}\thinspace}
\providecommand{\MR}{\relax\ifhmode\unskip\space\fi MR }
\providecommand{\MRhref}[2]{%
  \href{http://www.ams.org/mathscinet-getitem?mr=#1}{#2}
}
\providecommand{\href}[2]{#2}

\end{document}